\newcommand{\E}{\mathbb{E}}
\newcommand{\R}{\mathbb{R}}
\newcommand{\argmin}{\operatorname{argmin}}
\newcommand{\xsph}{x^{\phi}}
\newcommand{\xsut}{x^{u_t}}
\newcommand{\emn}{\mathrm{EOT}(\mu,\nu;\vep)}
\newcommand{\px}{p_{\scaleto{X}{3.5pt}}}
\newcommand{\py}{p_{\scaleto{Y}{3.5pt}}}
\newcommand{\et}{\rho_t}
\newcommand{\mcx}{\mathcal{X}}
\newcommand{\bmd}{\Delta}
\newcommand{\trc}{\mathrm{Trace}}
\newcommand{\mcI}{\mathcal{I}}
\newcommand{\mcJ}{\mathcal{J}}
\newcommand{\mcD}{\mathcal{D}}
\newcommand{\mc}{\mathcal{C}^{\vep}}
\newcommand{\secphx}{\frac{\partial\xsph}{\partial x\hfill}}
\newcommand{\secphxil}{\frac{\partial\xsph_i}{\partial x_{\ell}}}
\newcommand{\secphxim}{\frac{\partial \xsph_i}{\partial x_{m}}}
\newcommand{\ldet}{\log{\mathrm{det}}}
\newcommand{\secph}{\frac{\partial x\hfill}{\partial\xsph}}
\newcommand{\secphlj}{\frac{\partial x_{\ell}}{\partial\xsph_j}}
\newcommand{\secphjl}{\frac{\partial x_{j}}{\partial\xsph_{\ell}}}
\newcommand{\secphmk}{\frac{\partial x_{m}}{\partial\xsph_k}}
\newcommand{\secphmi}{\frac{\partial x_{m}}{\partial\xsph_i}}
\newcommand{\lmn}{\lambda_{\mathrm{min}}}
\newcommand{\lmx}{\lambda_{\mathrm{max}}}
\newcommand{\mI}{KL}
\newcommand{\lsi}[1]{C_{#1}}
\newcommand{\ptac}{\mathcal{P}_2^{\mathrm{ac}}(\R^d)}
\newcommand{\sfe}{\sigma_F^2(t)}
\newcommand{\sse}{\sigma_S^2(t)}
\newcommand{\diffcont}{\mathcal{C}}
\newcommand{\iprod}[1]{\left\langle #1 \right\rangle}
\newcommand{\probspace}{\mathcal{P}_2\left( \R^d\right)}
\newcommand{\tanspace}{\mathrm{Tan}}
\newcommand{\ltwo}{\mathbf{L}^2}
\newcommand{\wass}{\mathbb{W}}
\newcommand{\vep}{\varepsilon}
\newcommand{\gpp}[1]{\tilde{\gamma}_{#1}^{\vep}}
\newcommand{\gnp}[1]{\gamma_{#1}^{\vep}}
\newcommand{\mk}[1]{\rho_{#1}^{\vep}}
\newcommand{\nk}[1]{\eta_{#1}^{\vep}}
\newcommand{\gvp}{\gamma^{\vep}}
\newcommand{\SP}[1]{\textcolor{purple}{SP:#1}}
\newcommand{\ND}[1]{\textcolor{blue}{#1}}
\newcommand{\wt}[3]{\wass_2^{#1}({#2},{#3})}
\newcommand{\dlt}[4]{d^{#1}_{\mathrm{LOT},{#2}}({#3},{#4})}
\newcommand{\lot}[1]{\mathrm{LOT}^2_{#1}}
\newcommand{\opV}{\mathcal{V}^{\vep}}
\newcommand{\opU}{\mathcal{U}^{\vep}}
\newcommand{\opS}{\mathcal{S}^{\vep}}
\newcommand{\rv}{\rho^{\vep}}
\newcommand{\NN}{\mathbb{N}}
\newcommand{\rr}{\mathbb{R}}
\newcommand{\KL}[2]{\mathrm{KL}(#1 \parallel  #2)}
\newcommand{\mgf}[3]{\mathcal{M}_{#1}[#2:#3]}
\newcommand{\mfR}[3]{\mathrm{Rem}\big[#1\big](#2;#3)}
\newcommand{\fil}{\mathcal{F}}
\newcommand{\opP}{\bm{P}^{\vep}}
\newcommand{\opQ}{\bm{Q}^{\vep}}
\newcommand{\tpP}[1]{\bm{\tilde{P}}_{#1\vep}}
\newcommand{\tpQ}[1]{\bm{\tilde{Q}}_{#1\vep}}
\newcommand{\opR}{\bm{R}^{\vep}}
\newcommand{\fv}{\mathrm{FV}}
\newcommand{\hs}{\mathrm{HS}}
\newtheorem{thm}{Theorem}
\newtheorem{lmm}[thm]{Lemma}
\newtheorem{prop}[thm]{Proposition}
\newtheorem{defn}[thm]{Definition}
\newtheorem{assm}{Assumption}
\theoremstyle{definition}
\newtheorem{remark}[thm]{Remark}
\newtheorem{ex}[thm]{Example}
\numberwithin{thm}{section}
\numberwithin{assm}{section}
\numberwithin{equation}{section}
\begin{document}

\title[Wasserstein Mirror Gradient Flows]{Wasserstein Mirror Gradient Flow as the limit of the Sinkhorn algorithm}

\author{Nabarun Deb}
\address{Nabarun Deb\\ Department of Mathematics \\ University of British Columbia\\ Vancouver, Canada\\ {Email: ndeb@math.ubc.ca}}
\author{Young-Heon Kim}
\address{Young-Heon Kim\\ Department of Mathematics \\ University of British Columbia\\ Vancouver, Canada\\ {Email: yhkim@math.ubc.ca}}
\author{Soumik Pal}
\address{Soumik Pal\\ Department of Mathematics \\ University of Washington\\ Seattle WA 98195, USA\\ {Email: soumik@uw.edu}}
\author{Geoffrey Schiebinger}
\address{Geoffrey Schiebinger\\ Department of Mathematics \\ University of British Columbia\\ Vancouver, Canada\\ {Email: geoff@math.ubc.ca}}

\begin{abstract}
 	We prove that the sequence of marginals obtained from the iterations of the Sinkhorn algorithm or the iterative proportional fitting procedure (IPFP) on joint densities, converges to an absolutely continuous curve on the $2$-Wasserstein space, as the regularization parameter $\vep$ goes to zero and the number of iterations is scaled as $1/\vep$ (and other technical assumptions). This limit, which we call the Sinkhorn flow, is an example of a Wasserstein mirror gradient flow, a concept we introduce here inspired by the well-known Euclidean mirror gradient flows. In the case of Sinkhorn, the gradient is that of the relative entropy functional with respect to one of the marginals and the mirror is half of the squared Wasserstein distance functional from the other marginal. Interestingly, the norm of the velocity field of this flow can be interpreted as the metric derivative with respect to the linearized optimal transport (LOT) distance. An equivalent description of this flow is provided by the parabolic Monge-Amp\`{e}re PDE whose connection to the Sinkhorn algorithm was noticed by Berman (2020). We derive conditions for exponential convergence for this limiting flow. We also construct a Mckean-Vlasov diffusion whose  marginal distributions follow the Sinkhorn flow.
\end{abstract}

\keywords{Entropy regularized optimal transport, Mckean-Vlasov diffusion, mirror descent, parabolic Monge-Amp\`ere, Sinkhorn Algorithm, Wasserstein gradient flows}
	
\subjclass[2000]{49N99, 49Q22, 60J60}

\thanks{Thanks to PIMS Kantorovich Initiative for facilitating this collaboration supported through a PIMS PRN and the NSF Infrastructure grant DMS 2133244.  Pal is supported by NSF grants DMS-2052239 and DMS-2134012.  Kim and Schiebinger are supported in part by New Frontier Research Funds (NFRF) of Canada as well as NSERC Discovery grant. Schiebinger is also supported by a MSHR Scholar Award, a CASI from the Burroughs Wellcome Fund and a CIHR Project Grant. Nabarun Deb is supported by the PIMS PRN postdoc fellowship.}

\maketitle

\section{Introduction}\label{sec:intro}

We study the scaling limit of the sequence of iterates coming from the celebrated Sinkhorn (or IPFP) algorithm \cite{sinkhorn1967diagonal} as the regularization parameter $\vep\to 0+$ and the number of iterations scales like $\lceil{1/\vep}\rceil$. Given two probability measures $\mu$ and $\nu$ on $\R^d$ with finite second moments, the Sinkhorn algorithm aims to solve the entropy regularized  optimal transport problem:
\begin{equation}\label{eq:eot}
\emn:=\min_{\pi\in \Pi(\mu,\nu)} \left[\frac{1}{2}\int \lVert x-y\rVert^2\,\,d\pi(x,y)+\vep\KL{\pi}{\mu\otimes \nu}\right],
\end{equation}
where $\Pi(\mu,\nu)$ denotes the space of probability measures on $\R^d\times\R^d$ with marginals $\mu$ and $\nu$. Here
\begin{equation}\label{eq:KLdef}
\KL{\pi}{\mu\otimes\nu}=\int \log\left(\frac{d\pi}{d(\mu\otimes\nu)}(x,y)\right)\,d\pi(x,y)
\end{equation}
denotes the standard Kullback-Leibler (KL) divergence and $\vep>0$ is called the regularization parameter. The $\emn$ problem above has a wide array of applications and hence, has attracted significant attention in probability, statistics, and machine learning; see, for example,  \cite{Guillaume2020,Chen2016,Cominetti1994,Christian2012,peyre2019computational,schiebinger2019optimal} and the references therein. The unregularized version of \eqref{eq:eot}, which theoretically corresponds to setting $\vep=0$ in $\emn$, is called the optimal transport problem~ \cite{Kantorovitch1942,monge1781memoire,Villani2003}, and it has thriving applications in large-scale problems~\cite{arjovsky2017wasserstein,rubner2000earth}. A computationally convenient way of addressing the optimal transport problem is to solve $\emn$ for small values of the regularization parameter $\vep>0$ \cite{cuturi2013sinkhorn}. 
Consequently, the recent years have witnessed a rich and growing body of literature aimed at understanding the convergence and stability properties of $\emn$ as $\vep\to 0$;  see~\cite{Bernton2022,carlier2022convergence,chiarini2022gradient,Conforti2021,eckstein2022convergence,Gigli2018,Christian2012,Mikami2004,Nutz2022,pal2019difference} and the references therein.  

\begin{figure}
    \centering
    \includegraphics[width=0.40\textwidth]{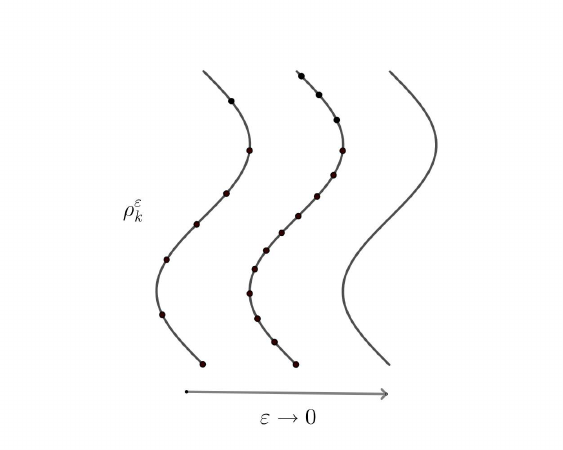}
    \caption{Convergence of the Sinkhorn iterates $\left(\mk{k},\; k\in \mathbb{N}\right)$ to an absolutely continuous curve (the \emph{Sinkhorn flow}) in the $2$-Wasserstein space, with $k=O(1/\vep)$, as $\vep\to 0$. Here, for a given $\vep>0$, we view the points on the corresponding curves as a discretization of an absolutely continuous curve with $O(1/\vep)$ points.}
    \label{fig:curve}
\end{figure}

One of the major breakthroughs that led to the popularity of the $\emn$ problem is that it can be computed efficiently via the Sinkhorn algorithm, also called IPFP or the Iterative Proportional Fitting Procedure, (see \cite{Chen2016,csiszar1975divergence,franklin1989scaling,ruschendorf1995convergence}), which can in turn be parallelized easily~\cite{cuturi2013sinkhorn,knight2008sinkhorn}. The algorithm proceeds as follows: start with a probability measure $\gnp{0}$ on $\R^d\times\R^d$, and a function $u_0:\R^d\to\R$, given by:
\[
\frac{d\gnp{0}}{d(\mu\otimes\nu)}(x,y)\propto \exp\left(\frac{1}{\vep}\langle x,y\rangle-\frac{1}{\vep}u_0(x)\right),\quad x,y\in\R^d.
\]
For a joint probability distribution $\gamma$, let $\px \gamma$ and $\py\gamma$ denote the $X$ and $Y$ marginal distributions under $\gamma$. Then, for $k\ge 1$, the Sinkhorn algorithm  proceeds iteratively by defining a sequence of pairs of joint distributions $\left(\gpp{k}, \gnp{k} \right)$ given by 
\begin{equation}\label{eq:sinkupdt}
    \frac{d\gpp{k}\hfill}{d\gnp{k-1}}(x,y)=\frac{d\mu}{d(\px\gnp{k-1})}(x), \quad \text{and} \;\frac{d\gnp{k}}{d\gpp{k}}(x,y)=\frac{d\nu}{d(\py\gpp{k})}(y).
\end{equation}

It is known and easily verifiable that each $\gnp{k}$ an $\gpp{k}$ solves the entropic problem in \eqref{eq:eot} between its own marginals. Further, it is easy to check that $\px\gpp{k}=\mu$ and $\py\gnp{k}=\nu$. Therefore, the Sinkhorn algorithm alternately attains the target marginals on the two coordinates, and is hence completely characterized by the respective sequence of opposite marginals $\mk{k}:=\px\gnp{k}$ and $\nk{k}:=\py\gpp{k}$. Our aim here is to study the limit of $\mk{k}$ as $k\to\infty$ and $\vep\to 0$ in an appropriate sense (noting that the limit of $\nk{k}$ can be studied similarly).

While it is known \cite{ruschendorf1995convergence} that the sequence $\left(\mk{k},\; k \in \NN \right)$ converges to the target marginal $\mu$ as $k\rightarrow \infty$ for fixed $\vep>0$, the corresponding limiting objects when $\vep\to 0$ and $k\to\infty$ simultaneously are not well understood. 
As $\vep \rightarrow 0+$, it is intuitive that $\mk{k}$ and $\mk{k+1}$ gets increasingly closer to each other. Assuming that all these measures have finite second moments, it makes sense to consider $\left( \mk{k},\; k \in \NN\right)$ as a sequence in the $2$-Wasserstein space, $\wass_2$, (defined below in \eqref{eq:2wass}) and ask if, as $\vep \rightarrow 0+$ and the number of iterations are properly scaled, this sequence of points in $\wass_2$ converges to an absolutely continuous (AC) curve (illustrated in Figure~\ref{fig:curve}). Every AC curve in the Wasserstein space is the solution of a continuity equation with a velocity field \cite[Chapter 8]{ambrosio2005gradient}. Therefore, characterizing the velocity field of this limiting AC curve gives approximate geometric properties of $\left(\mk{k},\; k \in \NN \right)$ as $\vep \rightarrow 0+$. This is the goal of this paper.

\begin{figure}
\centering
\begin{tikzpicture}
\begin{scope}[thick,decoration={
    markings,
    mark=at position 0.999 with {\arrow{>}}}
    ] 
\draw[thick, postaction={decorate}] (0,0) .. controls (1,3) and (3,0) .. (4,2);
\end{scope}
\begin{scope}[thick,decoration={
    markings,
    mark=at position 0.5 with {\arrow{>}}}
    ] 
    \draw[postaction={decorate}] (0.5,0.99) -- (1,3);
    \draw[postaction={decorate}] (2,1.355) -- (1,3);
    \draw[postaction={decorate}] (3.6,1.5) -- (1,3);
\end{scope}
\node at (0.9,3.1) {$e^{-g}$};
\node at (4.3,2.3) {$e^{-f}$};
\node at (0.2,0.0) {$\rho_0$};
\node at (0.6,0.8) {$\rho_1$};
\node at (2,1.2) {$\rho_2$};
\node at (3.8,1.4) {$\rho_3$};
\node at (0.3,1.7) {$\nabla u_1$};
\node at (1.4,1.8) {$\nabla u_2$};
\node at (2.5,2.5) {$\nabla u_3$};

\qquad\qquad

\begin{scope}[thick,decoration={
    markings,
    mark=at position 0.999 with {\arrow{>}}}
    ] 
\draw[thick, postaction={decorate}] (5,0) .. controls (6,3) and (8,0) .. (9,2);
\end{scope}
\begin{scope}[thick,decoration={
    markings,
    mark=at position 0.5 with {\arrow{<}}}
    ] 
    \draw[postaction={decorate}] (5.5,0.99) -- (6,3);
    \draw[postaction={decorate}] (7,1.355) -- (6,3);
    \draw[postaction={decorate}] (8.6,1.5) -- (6,3);
    \draw[postaction={decorate}]
    (7, 1.355) -- (5.5, 0.99);
\end{scope}
\node at (5.9,3.1) {$e^{-g}$};
\node at (9.3,2.3) {$e^{-f}$};
\node at (5.2,0.0) {$\rho_0$};
\node at (5.6,0.8) {$\rho_1$};
\node at (7,1.2) {$\rho_2$};
\node at (8.8,1.4) {$\rho_3$};
\node at (5.3,1.7) {$\nabla w_1$};
\node at (6.3,1.8) {$\nabla w_2$};
\node at (7.5,2.5) {$\nabla w_3$};
\node at (6.25,0.8) {$v_1$};
\end{tikzpicture}
\caption{Evolution of the Sinkhorn flow $(\rho_t,t\ge 0)$. On the left,  $\nabla u_t$ is the Brenier map transporting $\rho_t$ to $e^{-g}$. $(u_t,t\ge 0)$ satisfies the parabolic Monge-Amp\`{e}re PDE: $\frac{\partial}{\partial t} \nabla u_t= \nabla_{\mathbb{W}} \KL{\rho_t}{e^{-f}}$. See Theorem \ref{thm:infexist}. On the right, $w_t=u_t^*$ is the corresponding family of convex conjugates. $\nabla w_t=(\nabla u_t)^{-1}$ transports $e^{-g}$ to $\rho_t$. For $s < t$, $t-s \approx 0$, the difference $\nabla w_t - \nabla w_s$ is approximately $v_s \circ \nabla w_s$, where $v_s$ is the velocity of the Sinkhorn flow at time $s$. This relates the velocity field with Linearized Optimal Transport. See \cref{prop:metderlin}.} 
\label{fig:evolution}
\end{figure}
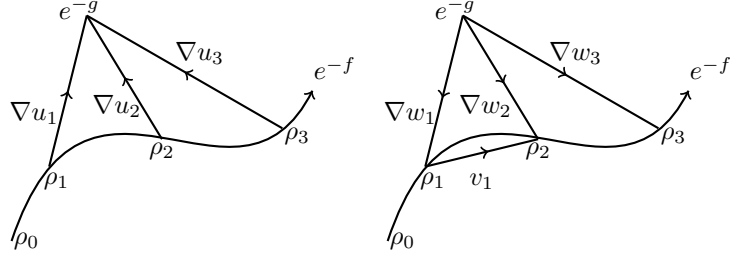

Under appropriate assumptions, in \cref{thm:inftheo}, we show that, indeed, as $\vep \rightarrow 0+$ with $k$ scaling as $O(1/\vep)$, the sequence $\left(\mk{k},\; k \in \NN \right)$ converges to an absolutely continuous curve in the Wasserstein space called the \emph{Sinkhorn flow}, characterized by a continuity equation that we call the \textit{Sinkhorn PDE}. We explicitly identify the velocity vector field for this evolution PDE.

We further observe in \cref{sec:wassmirrorflowformal} that the Sinkhorn flow is one member of a particular class of absolutely continuous curves in the Wasserstein space which we introduce and refer to as  \emph{Wasserstein mirror gradient flows}, or \emph{Wasserstein mirror flows} for short (see \cref{sec:othexamp} for examples). The concept of mirror gradient flows, although popular in the Euclidean setting \cite{nemirovskii83}, appears to be unexplored in the Wasserstein space. In fact, the Sinkhorn flow can be viewed as the Wasserstein mirror flow of the KL divergence functional. This is different from the solution to the Fokker-Planck PDE \cite[Section 8.3]{santambrogio2015optimal}, which corresponds to the usual Wasserstein gradient flow of the KL divergence functional. Now, it is widely known that the marginal distributions of the Langevin diffusion solve the Fokker-Planck PDE. In the same vein, we also obtain a new Mckean-Vlasov \cite{mckean75} diffusion process (see \cref{sec:diffmirr} below), which we call the \emph{Sinkhorn diffusion}, whose marginal distribution exactly tracks the evolution of the Sinkhorn PDE mentioned above. 

A lot of the existing literature on the evolution of the Sinkhorn algorithm \eqref{eq:sinkupdt} focuses on obtaining rates of convergence for a fixed $\vep$ as a function of $k$. We refer the reader to \cite{Ghosal2022,Nutz2023,ruschendorf1995convergence} for some qualitative results and \cite{deligiannidis2021quantitative,EcksteinNutz2022,ghosal2022convergence} for quantitative ones. However, the constants appearing in these bounds blow up as $\vep \downarrow 0$. In contrast, we avoid this pitfall by scaling our iteration as $O(1/\vep)$, then taking a scaling limit and then analyzing the rate of convergence of the scaling limit using geometric ideas.
See \cref{lem:expcon} for conditions under which there is exponentially fast convergence. This knowledge can now be transferred to small but finite $\vep$ using our \emph{quantitative rates}, in the squared Wasserstein distance, for the convergence of $\mk{k}$ to the marginals of the limiting flow (see \cref{thm:convergence}).



Two other lines of work on the Sinkhorn algorithm are more closely related to ours. In \cite{leger2021gradient}, the author shows that the Sinkhorn algorithm \eqref{eq:sinkupdt} can be viewed as a mirror descent algorithm on the space of probability measures for fixed $k$ and $\vep$. In particular, \cite[Corollary 1]{leger2021gradient} leverages this connection to show that $\mk{k}$ converges in KL to $\mu$ at a $O(k\vep)$ rate. However, this result cannot identify our limiting dynamics as $k\vep\to t$ and $t\ge 0$. In a different direction,  \cite{berman2020} studies the Sinkhorn algorithm \eqref{eq:sinkupdt} in the same scaling limit as we do, and proves that the evolution of the underlying Sinkhorn potentials (defined below in \eqref{eq:twostepit}) converge to the solutions of a parabolic Monge-Amp\`ere (PMA) PDE (see \eqref{eq:pma} below). However, while Berman's picture focuses on potentials, our focus is on  the convergence of the marginal measures. The proof of convergence of the evolution of the measures $\mk{k}$s to the Sinkhorn flow requires a higher order asymptotic analysis than Berman's proof in \cite{berman2020}. The main difficulty is that the convergence of potentials, as shown by Berman, does not imply the convergence of the flow. Rather, one needs to argue that discrete time difference of the potentials converge to the time derivative in the continuum. 
 Although this required us to develop  different techniques than both Berman and L\'{e}ger, our ideas are inspired from their work.

As we argue in Section \ref{sec:wassmirrorflowformal}, every Wasserstein mirror gradient flow can be described by a pair of equivalent PDEs, one describing the time evolution of the corresponding potentials and the other describing the time evolution of the marginal measures as a continuity equation. In our context, Berman's PMA tracks the evolution of the Sinkhorn potentials, and equivalently, the Sinkhorn flow which we introduce in this paper, tracks the evolution of the marginal distributions. Thus the Sinkhorn flow (see \cref{thm:inftheo} below) also yields a geometric description of the PMA which may be of independent interest.   

\subsection{Notation}\label{sec:nota}

For convenience of the reader, we list some generic notation in the form of a table at the end of the paper; see \cref{tab:table3}. We also refer the reader to \cite[Chapter 10]{ambrosio2005gradient} for a background on Otto calculus which will be used throughout the paper. In this section let us focus on the more specialized notation.

For a function $u:\R^d\to\R \cup \{\infty\}$, its Legendre transform is given by
\[
u^*(y):=\sup_{x\in\R^d}(\iprod{x,y} -u(x)),\quad\quad y\in\R^d.
\]
Now let $u$ be a differentiable convex function such that $\nabla u$ is a diffeomorphism on $\R^d$. That is $\nabla u: \R^d \rightarrow \R^d$ is a differentiable map such that its inverse $\left( \nabla u\right)^{-1}= \nabla u^*$ is also differentiable. This allows us to think of $\R^d$ as a manifold with two global coordinate charts $x\mapsto x$ and $x\mapsto \nabla u(x)$.

\begin{defn}[Mirror coordinates]\label{def:pdcor}
Given $x\in \mcx$, define $x^u:=\nabla u(x)$. We will call $x^u$ the mirror coordinate for $x$ with respect to $u$.  
\end{defn}

We will refer to the Hessian matrix $\nabla^2 u(x)$ by the notation $\frac{\partial x^u}{\partial x\hfill}$. Note that $\left( x^u \right)^{u^*}=x$ and $\left(y^{u^*} \right)^u=y$. Thus (see, for example, \cite[Lemma 2.3]{berman2020}), if $y=x^u$,
\begin{equation}\label{eq:conjrel}
\frac{\partial x\hfill}{\partial x^u}:=\frac{\partial y^{u^*}}{\partial y\hfill}=\nabla^2u^*\left( x^u\right)= \left(\frac{\partial x^u}{\partial x\hfill} \right)^{-1}. 
\end{equation}
We will utilize this convenient notation throughout. 

\begin{remark}
    As a matter of convention, $\frac{\partial f\hfill}{\partial x^{u}} (x^{u})$ will denote the derivative of $f$ evaluated at $x^{u}$. This is different from taking the derivative of the composition function $f(x^{u})$ which we denote by $\frac{\partial}{\partial x}(f(x^{u}))$. In particular, 
    \begin{align}\label{eq:notcal}
    \frac{\partial}{\partial x}(f(x^{u}))=\left\langle  \frac{\partial x^{u}}{\partial x\hfill},\frac{\partial f\hfill}{\partial x^{u}}(x^{u})\right\rangle.
    \end{align}
\end{remark}


Let $\ptac$ denote the space of probability measures on $\R^d$ with positive densities and finite second moments. We will need three notions of distances between two probability measures, say $\mu_1$ and $\mu_2$, which we write as $\KL{\mu_1}{\mu_2}$, $\wt{}{\mu_1}{\mu_2}$, and $\dlt{}{\sigma}{\mu_1}{\mu_2}$ that denote the KL divergence defined in \eqref{eq:KLdef}, the $2$-Wasserstein distance (see \cite[Equation 4]{Villani2003}, \cite[Equation 7.1.1]{ambrosio2005gradient}), and the linearized optimal transport distance (see \cite[Equations 2 and 3]{Wang2013}, \cite{cai2020linearized,moosmuller2023linear}) with respect to a reference probability distribution $\sigma\in \ptac$. The latter two are defined below.

 The squared $2$-Wasserstein distance is defined as
 \begin{equation}\label{eq:2wass}
 \wass_2^2(\mu_1,\mu_2):=\min_{\gamma\in \Pi(\mu_1,\mu_2)} \int \lVert x-y\rVert^2\,d\gamma(x,y),
 \end{equation}
 where, as mentioned before, $\Pi(\mu_1,\mu_2)$ is the space of probability measures on $\R^d\times\R^d$ which preserves the marginals at $\mu_1$ and $\mu_2$. The metric space $\left( \ptac, \wass_2\right)$ will be often denoted (by an abuse of notation) also as $\wass_2$ space. The squared linear optimal transport distance is given by:

 \begin{equation}\label{eq:2linot}
 \dlt{2}{\sigma}{\mu_1}{\mu_2}:=\int \lVert T_{\mu_1}(y)-T_{\mu_2}(y)\rVert^2 \,d\sigma(y),
 \end{equation}
 where $(T_{\mu_1})_{\#}\sigma=\mu_1$ and $(T_{\mu_2})_{\#}\sigma=\mu_2$ are the two optimal transport maps.


\subsection{Informal description of main results}\label{sec:formalres} 
Let us outline our main results which make the connection between the Sinkhorn algorithm and the notion of mirror descent more explicit. 
Recall that our state space is $\R^d$ for some $d\ge 1$.
Let $\mu$ and $\nu$ be two Borel probability measures in $\ptac$ (absolutely continuous with finite second moments). Let their densities be given by $e^{-f(x)}$ and $e^{-g(y)}$ respectively. For a full list of assumptions, see \cref{asn:solcon}. Recall the definition of absolutely continuous curves on the $2$-Wasserstein space and its correspondence with solutions of the continuity equations (see \cite[Chapter 8]{ambrosio2005gradient}).

\begin{thm}[Existence of Sinkhorn flow (Informal)]\label{thm:infexist}
  There exists an absolutely continuous curve on the $\wass_2$ space which satisfies the continuity equation $\frac{\partial}{\partial t}\rho_t+\div(\rho_t v_t)=0$ with 
  \begin{equation}\label{eq:velocity}
 v_t(x)= -\frac{\partial\hfill}{\partial x^{u_t}}((f+\log{\rho_t})(x)) =
 -\left( \nabla^2 u_t(x)\right)^{-1}\nabla_{\mathbb{W}}\KL{\rho_t}{e^{-f}}(x).
\end{equation}
Here,  $\nabla u_t$ is the Brenier map transporting $\rho_t$ to $\nu$, i.e., $u_t$ is convex and $(\nabla u_t)_{\#}\rho_t=e^{-g}$ and $\left( \nabla^2 u_t(x)\right)^{-1}$ is the inverse of the Hessian matrix of $u_t$ at $x$. Here $\nabla_{\mathbb{W}}\KL{\rho_t}{e^{-f}}(x)$ refers to the usual Wasserstein gradient of the functional $\rho \mapsto \KL{\rho}{e^{-f}}$ evaluated at $x$ (see \cite[Chapter 10]{ambrosio2005gradient} for details on differential calculus on the $\wass_2$ space). We will call this curve the Sinkhorn flow. 

The Sinkhorn flow can be viewed as the Wasserstein mirror gradient flow of the KL functional $\KL{\cdot}{e^{-f}}$ with the mirror map given by half of the squared Wasserstein distance $\wass_2^2(\cdot, e^{-g})$. In particular, the family of Brenier maps satisfies the PDE 
\begin{align}\label{eq:newPDE}
\frac{\partial}{\partial t} \nabla u_t(x)= \nabla_{\mathbb{W}}\KL{\rho_t}{e^{-f}}(x). 
\end{align}
This is nothing but the parabolic Monge-Amp\`{e}re PDE, see  \eqref{eq:pma}, written slightly differently.
\end{thm}
\noindent For a pictorial description of the Sinkhorn flow, we refer the reader to Figure \ref{fig:evolution}. A precise version of the above statement can be found in \cref{thm:existlin}. Also, the notion of Wasserstein mirror gradient flows is introduced in \cref{sec:wassmirrorflowformal}. Very roughly, a ``mirror'' associates a dual pair of ``coordinate systems'' to each point on a manifold. Then, given a function $F$, the mirror gradient flow of $F$ is a curve on the manifold that admits two equivalent descriptions. They both describe the time derivative of one coordinate to be equal to the negative gradient of $F$ with respect to its mirror. When the mirror is the identity, we recover our usual gradient flow. We extend this Euclidean notion to the Wasserstein space by an analogous formalism that views the $2$-Wasserstein space as an infinite-dimensional Riemannian manifold as in \cite{Otto_2001}.

We note that $v_t$ as in \eqref{eq:velocity} is not a gradient with respect to the canonical coordinate system (as is the case with the usual Wasserstein gradient flows), but instead, it is a gradient with respect to the mirror coordinate system (see \cref{def:pdcor}). For the usual  Wasserstein gradient flows, 
it is well-known (see \cite[Proposition 8.4.6]{ambrosio2005gradient}) that there exists a velocity field $(v_t)_{t\ge 0}$ (gradient of a function with respect to the canonical coordinate system) of an absolutely continuous curve $(\rho_t)_{t\ge 0}$ which satisfies:
$$\limsup_{\delta\to 0} \delta^{-1}\wass_2\big(\rho_{t+\delta},\rho_t\big)= \lVert v_t\rVert_{L^2(\rho_t)}.$$
In other words, for Wasserstein gradient flows, $\lVert v_t\rVert_{L^2(\rho_t)}$ can be interpreted as the metric derivative of the curve $(\rho_t)_{t\ge 0}$ at time $t$, with respect to the $\wass_2$ distance. This interpretation may not hold for the Sinkhorn flow as in \cref{thm:inftheo} because $v_t$ is not a usual gradient. In \cref{prop:metderlin}, we therefore provide an alternate interpretation of $v_t$ as in \eqref{eq:velocity}. We show that $\lVert v_t\rVert_{L^2(\rho_t)}$ is the metric derivative of the curve $\rho_t$ but with respect to the linearized optimal transport distance (see~\eqref{eq:2linot}) instead of the usual $\wass_2$ distance (see \eqref{eq:2wass}), i.e.,  
$$\lim_{\delta\to 0} \delta^{-1}\dlt{}{e^{-g}}{\rho_{t+\delta}}{\rho_t}=\lVert v_t\rVert_{L^2(\rho_t)},$$
where $v_t$ and $\rho_t$ are defined as in \cref{thm:inftheo}. This provides an interesting connection between the Sinkhorn flow and the LOT literature \cite{Wang2013}. We refer the reader to Figure \ref{fig:evolution} for an illustration of this connection.

Given a flow in the space of probability measures, it is natural to ask if there is a stochastic process, preferably Markov, that generates the same family of marginals. In \cref{sec:diffmirr}, we answer this question in the affirmative.
\begin{thm}[Sinkhorn diffusion (Informal)]\label{thm:diffexist}
    Let $h_t:=-\log\rho_t$ where $\rho_t$ is a solution of the Sinkhorn PDE/flow in \cref{thm:infexist}. Let $B$ denote a standard $d$-dimensional Brownian motion and consider the following SDE given $X_0\sim \rho_0$:
    $$ dX_t=\left(-\frac{\partial f\hfill}{\partial \xsut}(X_t)-\frac{\partial g\hfill}{\partial\xsut}\left(X_t^{u_t}\right)+\frac{\partial h_t\hfill}{\partial \xsut}(X_t)\right)\,dt+\sqrt{2\frac{\partial X_t\hfill}{\partial X_t^{u_t}}}dB_t.$$
    Here, as before, $\nabla u_t$ is the convex Brenier map transporting $\rho_t$ to $e^{-g}$.
    Then, the marginals of the process $(X_t)_{t\ge 0}$ evolve according to the Sinkhorn flow in \cref{thm:infexist}. We will call this SDE the \emph{Sinkhorn diffusion}.
\end{thm}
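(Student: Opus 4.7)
The plan is to view the SDE as a time-inhomogeneous diffusion whose coefficients are frozen by the Sinkhorn flow $(\rho_t)_{t \ge 0}$ of \cref{thm:infexist}, and to verify that the density $p_t$ of $X_t$ coincides with $\rho_t$ by identifying the Fokker--Planck equation of $p_t$ with the Sinkhorn PDE. Write $h_t := -\log \rho_t$ and $A_t(x) := (\nabla^2 u_t(x))^{-1} = \tfrac{\partial x}{\partial x^{u_t}}$, so that $\sigma_t \sigma_t^\top = 2 A_t$ and, unpacking the mirror-coordinate notation of the paper, the drift reads
\[
b_t(x) = -A_t(x)\nabla f(x) - \nabla g(x^{u_t}) + A_t(x)\nabla h_t(x).
\]

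Two structural identities drive the computation. First, the log Monge--Amp\`ere relation $h_t(x) = g(x^{u_t}) - \log\det \nabla^2 u_t(x)$ (a consequence of $(\nabla u_t)_{\#} \rho_t = e^{-g}$ via change of variables) differentiates in $x$ to give $A_t \nabla h_t = \nabla g(x^{u_t}) - A_t \nabla \log\det \nabla^2 u_t$. Second, starting from $\partial_j A_{t,ij} = -(A_t(\partial_j \nabla^2 u_t) A_t)_{ij}$, exploiting the symmetry of the third derivatives of $u_t$, and applying Jacobi's formula $\partial_k \log\det \nabla^2 u_t = \operatorname{tr}(A_t \partial_k \nabla^2 u_t)$, the sum over $j$ collapses to the algebraic identity $\nabla \cdot A_t = -A_t \nabla \log\det \nabla^2 u_t$. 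Combining the two gives the clean relation $A_t \nabla h_t = \nabla g(x^{u_t}) + \nabla \cdot A_t$.

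Expanding the diffusive part of the Fokker--Planck equation as $\sum_{ij}\partial_i \partial_j(A_{t,ij} p_t) = \nabla \cdot (A_t \nabla p_t + p_t \nabla \cdot A_t)$ yields $\partial_t p_t = -\nabla \cdot \bigl(p_t [b_t - \nabla \cdot A_t] - A_t \nabla p_t\bigr)$. Substituting the drift and using $A_t \nabla h_t = \nabla g(x^{u_t}) + \nabla \cdot A_t$ makes the $\nabla g(x^{u_t})$ and $\nabla \cdot A_t$ terms cancel, and one is left with
\[
\partial_t p_t \;=\; \nabla \cdot \!\bigl(A_t [p_t \nabla f + \nabla p_t]\bigr) \;=\; -\nabla \cdot \!\bigl(p_t \cdot [-A_t \nabla(f + \log p_t)]\bigr).
\]
This is \emph{linear} in $p_t$ because $A_t$ is pre-determined by $\rho_t$. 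Since $\rho_t$ itself satisfies the Sinkhorn PDE of \cref{thm:infexist}, it also solves this linear equation with initial datum $\rho_0$, and standard uniqueness for linear parabolic PDEs with smooth, non-degenerate coefficients forces $p_t = \rho_t$ for all $t \ge 0$, giving the claim.

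The main obstacle is regularity. Existence and uniqueness of the SDE need Lipschitz and non-degeneracy bounds for $A_t$, $A_t \nabla f$, $\nabla g(\cdot^{u_t})$, and $A_t \nabla h_t$, uniform on compact time intervals; the adjoint calculation and the PDE uniqueness likewise need sufficient spatial regularity of $u_t$ and $\rho_t$ to justify the integrations by parts. These inputs should follow from \cref{asn:solcon} together with the parabolic Monge--Amp\`ere regularity theory used to construct the Sinkhorn flow in \cref{thm:infexist}, but propagating them into time-uniform estimates---and in particular ensuring that $u_t$ remains a genuine $C^2$ Brenier potential along the entire flow---is the main technical task.
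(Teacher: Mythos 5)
Your computation is correct: with $A_t=(\nabla^2 u_t)^{-1}$, the two identities you use, namely $h_t(x)=g(x^{u_t})-\log\det\nabla^2u_t(x)$ (the paper's \cref{lem:jacobian}, i.e.\ \eqref{eq:com1}) and $\nabla\cdot A_t=-A_t\nabla\log\det\nabla^2u_t$ (which is exactly \cref{lem:tensorel} rewritten), do reduce the Fokker--Planck equation of the law of $X_t$ to $\partial_t p_t=\nabla\cdot\bigl(A_t[p_t\nabla f+\nabla p_t]\bigr)$, and the Sinkhorn flow $\rho_t$ solves the same linear equation with the same initial datum, so the comparison-by-uniqueness argument closes the marginal identification.

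This is, however, a genuinely different route from the paper's. The paper does not argue through the forward equation of the primal process at all: it first introduces the dual diffusion $Y_t$ of \eqref{eq:dualdiffSDE}, whose drift $-\nabla h_t(y^{w_t})$ and diffusion $2\,\partial y/\partial y^{w_t}$ are simpler, proves strong well-posedness and computes its generator to show that $e^{-g}$ is stationary (\cref{thm:existprop}); it then passes to $X_t=Y_t^{w_t}$ by It\^o's formula, using the dual PMA of \cref{lem:dualPMA} together with \cref{lem:jacobian} and \cref{lem:tensorel} to see that $X$ solves \eqref{eq:diffSDE} (and conversely, giving weak uniqueness for $X$), and finally identifies the marginal flow by testing the generator against smooth compactly supported $\phi$ and reading off the velocity $v_t=-\tfrac{\partial}{\partial x^{u_t}}(f-h_t)$ in the continuity equation (\cref{thm:existpropX}). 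The paper's detour through $Y$ buys existence and uniqueness of the diffusion from standard SDE theory applied to the better-behaved dual coefficients, plus a transparent stationarity statement; your route avoids the dual process and the It\^o change of variables entirely, at the price of the two technical inputs you already flag: (i) well-posedness of the primal SDE itself (Lipschitz and uniform ellipticity of $A_t$, $A_t\nabla f$, $\nabla g(\cdot^{u_t})$, $A_t\nabla h_t$, which do follow from \cref{asn:solcon} but must be checked, as the paper does in \cref{rem:suffcon}), and (ii) a uniqueness theorem for the linear Fokker--Planck equation valid in a class containing both the marginal law of the SDE (a priori only a distributional solution obtained via It\^o against test functions) and the classical solution $\rho_t$; under the bounded, uniformly elliptic, Lipschitz coefficients guaranteed by \cref{asn:solcon} this is standard, but it should be stated and cited rather than left as ``standard uniqueness.'' With those two points filled in, your argument is a complete and somewhat more direct proof of the marginal statement, though it yields less than the paper's theorem, which also delivers the dual diffusion and uniqueness of the Sinkhorn diffusion itself.
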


A formal version of this result is provided in \cref{thm:existpropX}. The Sinkhorn diffusion is motivated from a natural Markov chain embedded in the Sinkhorn algorithm \eqref{eq:sinkupdt} defined in \cref{prop:mchn}. Note that when $u_t$ is free of $t$, i.e., $u_t=u$, then $h_t=g$ for all $t$. Therefore, the Sinkhorn diffusion reduces to the mirror Langevin diffusion, first introduced in \cite{zhang2020wasserstein}, and further analyzed in \cite{ahn2021efficient,chewi2020exponential}. In the further special case where $f=g$, then $x^{u_t}=x$ for all $t$ and the Sinkhorn diffusion reduces to the standard Langevin diffusion (see \cite{durmus2019analysis,jordan1998variational}). Let us also point out that our definition of Wasserstein mirror flow is different from the similarly named ``mirrored Wasserstein gradient flow" introduced in \cite{sharrock2023learning}. 

Finally, we connect the Sinkhorn flow with the Sinkhorn algorithm by proving that the $X$-marginals $\mk{k}$s from the algorithm converge to the marginals of the Sinkhorn flow, under an appropriate scaling.

\begin{thm}[Convergence as $\vep\downarrow 0$ (Informal)]\label{thm:inftheo}
Under appropriate assumptions on $\mu,\nu$ and $u_0$, given any $t>0$, we have $\wass_2^2(\rho_{\lfloor t/\vep\rfloor}^{\vep},\rho_t)=O(\vep)$.
\end{thm}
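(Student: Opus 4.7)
The plan is to compare the Sinkhorn iterates with the Sinkhorn flow by a quantitative one-step analysis, followed by a discrete Grönwall argument over the $O(1/\vep)$ iterations. Fix $t>0$, set $k_* := \lfloor t/\vep \rfloor$, and define $E_k := \wass_2^2(\mk{k},\rho_{k\vep})$ for $0 \le k \le k_*$. Under the initialization assumed in \cref{asn:solcon}, $E_0 = 0$. The goal is to establish a recursion of the form
\[
E_{k+1} \le (1 + C\vep)\, E_k + C \vep^{2},
\]
valid uniformly in $k \le k_*$; iterating $k_* = O(1/\vep)$ times and summing a geometric series then yields $E_{k_*} = O(\vep)$, as claimed.

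The first step is a quantitative one-step expansion. Using \eqref{eq:sinkupdt} together with the Sinkhorn-potential representation of the iterates, the marginal update $\mk{k} \mapsto \mk{k+1}$ can be realized as the pushforward by a transport map of the form $\mathrm{id} + \vep\, w_k^\vep$, where $w_k^\vep$ is constructed from the conjugate Sinkhorn potentials at step $k$. Building on \cite{berman2020} and the PMA interpretation provided by \cref{thm:infexist}, we must go one order beyond Berman by showing that $w_k^\vep$ approximates the Sinkhorn velocity $v_{k\vep}$ of \eqref{eq:velocity} in $L^2(\mk{k})$, and with an explicit rate coming from the smoothness of $\rho_s$, $u_s$ and $f$. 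This relies on uniform-in-$\vep$ $C^{2,\alpha}$ regularity bounds on the Sinkhorn potentials, propagated along the iteration through Caffarelli-type estimates for entropic transport under the hypotheses on $\mu$, $\nu$ and $u_0$ in \cref{asn:solcon}.

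The second step compares the discrete update to an Euler step of the limiting flow. The continuity equation of \cref{thm:infexist} together with $C^1$ regularity in time yields $\wass_2(\rho_{(k+1)\vep}, (\mathrm{id}+\vep v_{k\vep})_\# \rho_{k\vep}) = O(\vep^{3/2})$. Coupling $\mk{k}$ and $\rho_{k\vep}$ by their optimal transport plan and transporting the Sinkhorn step through this coupling, a triangle inequality combined with a Lipschitz estimate on $x\mapsto v_s(x)$ (inherited from the uniform regularity above) produces
\[
\wass_2(\mk{k+1}, \rho_{(k+1)\vep}) \le (1 + C\vep)\, \wass_2(\mk{k}, \rho_{k\vep}) + C\vep^{3/2}.
\]
Squaring via Young's inequality $2ab \le \vep a^2 + \vep^{-1}b^2$ yields the recursion for $E_k$ stated above, and discrete Grönwall then delivers the bound.

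The main obstacle is the one-step expansion in Step~1, precisely the \emph{higher-order asymptotic analysis} that the authors contrast with Berman's work: Berman's convergence is a zeroth-order-in-time statement about potentials, whereas here we need the discrete time differences $(\nabla u_{k+1}^\vep - \nabla u_k^\vep)/\vep$ to converge, with an explicit rate, to $\partial_t \nabla u_s$ identified via the parabolic Monge--Amp\`ere PDE. Producing a remainder measured in a norm strong enough to upgrade to a $\wass_2$ estimate on the pushforward marginals, and controlled uniformly along the $O(1/\vep)$ iterations, is the technically delicate part; the hypotheses on $\mu$, $\nu$ and $u_0$ in \cref{asn:solcon} are precisely what will make this analysis go through.
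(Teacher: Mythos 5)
There is a genuine gap: your scheme hinges on two ingredients that are neither supplied nor obtainable from the paper's hypotheses. First, your Step 1 requires uniform-in-$\vep$ $C^{2,\alpha}$ control of the Sinkhorn potentials $u_k^{\vep}$, propagated over $O(1/\vep)$ iterations by ``Caffarelli-type estimates for entropic transport.'' No such propagation result is available here, and \cref{asn:solcon} only gives regularity of the PMA solution $u_t$, not of the algorithmic potentials; indeed the paper's proof is built precisely to avoid ever needing smoothness of $u_k^{\vep}$ (it replaces them by the PMA potentials $u_{k\vep}$ and only ever compares the two in sup norm). Relatedly, realizing the marginal update $\mk{k}\mapsto\mk{k+1}$ as a pushforward by $\mathrm{id}+\vep w_k^{\vep}$ with $w_k^{\vep}\approx v_{k\vep}$ in $L^2$ quietly absorbs the diffusive part of the entropic coupling (the conditional laws are Gaussian-like blobs of width $\sqrt{\vep}$); showing that this blur affects the marginal only at order $\vep^{3/2}$ in $\wass_2$ is exactly the Laplace-expansion work that has to be done, so Step 1 defers rather than resolves the difficulty. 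Second, your recursion $E_{k+1}\le(1+C\vep)E_k+C\vep^2$ needs the one-step Sinkhorn marginal map to be $(1+C\vep)$-Lipschitz in $\wass_2$ (stability), on top of the Euler consistency. The Lipschitz bound on $v_s$ controls the flow, not the algorithm: the Sinkhorn step applied to $\mk{k}$ is driven by the entropic potentials of $\mk{k}$, and a $\wass_2$-stability estimate for that map with constant $1+O(\vep)$ (any weaker constant is fatal over $1/\vep$ steps) is a nontrivial unproven claim.

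For contrast, the paper never runs a Gr\"onwall argument in $\wass_2$ at all. It controls the error at the level of potentials: by the $1$-Lipschitz property of $\opU$ and $\opV$ in sup norm (\cref{prop:contra}) together with the one-step Laplace estimate (\cref{lem:pmaboundmain}, giving an $O(\vep^2)$ sup-norm consistency error per step), the tilt $a_k^{\vep}=\frac{1}{\vep}(u_k^{\vep}-u_{k\vep})$ accumulates only to an $O(1)$ bound after $O(1/\vep)$ steps (\cref{lem:erboundmain}). This bounded tilt yields a two-sided density-ratio comparison between the Sinkhorn coupling $\gvp_{k+1}$ and a surrogate coupling built from the PMA potential, both of which have $Y$-marginal $e^{-g}$; coupling through $Y$ and a Gaussian concentration estimate then gives $\wass_2^2\le C\vep$ in one shot, the $\sqrt{\vep}$ rate coming from the width of a single entropic step rather than from accumulated per-step errors. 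If you want to salvage your route, the missing lemmas (uniform regularity of $u_k^{\vep}$ along iterates and $(1+C\vep)$-stability of the marginal map in $\wass_2$) would have to be proved from scratch, and that is where the real work lies.
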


A formal version of this result is presented in \cref{thm:convergence}. 

\begin{remark}[Comparison with Fokker-Planck PDE]\label{rem:fplank}
 Recall from \cref{thm:diffexist} that $h_t=-\log{\rho_t}$. With this notation, the Sinkhorn PDE simplifies to 
 \begin{equation*}
 \frac{d}{dt}h_t(x)+\left\langle \frac{\partial g\hfill}{\partial x^{u_t}}(x^{u_t}),\frac{\partial}{\partial x}(h_t-f)(x)\right\rangle+\trc\left(\left(\frac{\partial x\hfill}{\partial x^{u_t}}\right)\nabla^2 (f-h_t)(x)\right)=0,
 \end{equation*}
 which has a similar form to the usual Fokker-Planck PDE \cite[Chapter 8.3]{santambrogio2015optimal}, given by:
 $$\frac{\partial}{\partial t} h_t(x)+\left\langle \frac{\partial}{\partial x} h_t(x),\frac{\partial}{\partial x} (h_t-f)(x) \right\rangle + \bmd{(f-h_t)(x)}=0.$$
 In particular, the velocity field for usual Fokker-Planck PDE is given by 
 \begin{equation}\label{eq:velfp}
 v_t(x)=-\frac{\partial}{\partial x}(f+\log{\rho_t})(x),
 \end{equation}
 which is the same as in~\eqref{eq:velocity} with $\frac{\partial\hfill}{\partial \xsut}$ replaced by $\frac{\partial}{\partial x}$.  
 \end{remark}

 Based on the above remark, it is natural to ask: (a) Can the Sinkhorn PDE be the same as the Fokker-Planck PDE ? (b) When they are different, can the Sinkhorn PDE converge faster as $t\to\infty$ ?
 Consider two simple examples below.
  \begin{ex}[Gaussian location]\label{ex:loc}
     Suppose $\mu= N(0,1)$ and $\nu= N(\theta,1)$ for $\theta\in\R\setminus\{0\}$. Also let $\rho_0\sim N(\theta,1)$. Elementary computations show that both the Fokker-Planck PDE and the Sinkhorn PDE lead to the same flow  
     $\rho_t= N(\theta \exp(-t),1)$.
  \end{ex}
 \begin{ex}[Gaussian scale]\label{ex:scale}
   Suppose $\mu= N(0,1)$ and $\nu= N(0,\eta^2)$ for $\eta\in (0,1)$. Also let $\rho_0= N(0,\eta^2)$. Then both the usual Fokker-Planck PDE and the Sinkhorn PDE are Gaussians with mean $0$ and variances $\sfe$ and $\sse$ respectively, given by:
     $$\sfe=1-(1-\eta^2)\exp(-2t),\quad \sse=\left(1-2\frac{1-\eta}{\exp\left(\frac{2t}{\eta}\right)(1+\eta)+(1-\eta)}\right)^2.$$
     While both $\sfe$ and $\sse$ converge to $1$ exponentially, it is easy to check that 
     $$\frac{1-\sfe}{1-\sse}\ge \frac{(1+\eta)^2}{4}\exp\left(2t\left(\frac{1}{\eta}-1\right)\right)\to\infty,\quad \mbox{as}\ t\to\infty,$$
     as $\eta<1$. This provides an example where the convergence of the Sinkhorn PDE is much faster than the usual Fokker-Planck.
\end{ex}

As these examples demonstrate, the rate of convergence, as $t\rightarrow \infty$, of the Sinkhorn flow is subtle. We show in Section \ref{sec:mirror} that the convergence is determined by a Hessian geometry (see \eqref{eq:hessian-metric}) that is distinct from the usual Wasserstein geometry. This Hessian geometry can slow down or speed up the mirror gradient flow at any $\rho \in \ptac$ depending on the Hessian of the Brenier map transporting $\rho$ to $\nu$. This phenomenon is already well understood in Euclidean mirror flows \cite[Section B.2.4]{Wilson18}. However, as shown in Theorem \ref{lem:expcon}, under appropriately nice assumptions, one can have exponential convergence of the Sinkhorn flow.  

\begin{thm}[Convergence as $t\rightarrow \infty$ (Informal)]\label{thm:infrate}
Suppose that the marginal density $e^{-f}$ satisfies the logarithmic Sobolev inequality (see \cref{def:isid} below). Also, if $(u_t)$ is the solution of the corresponding PMA, assume that the eigenvalues of the inverse Hessian matrices $(\nabla^2 u_t)^{-1}(x)$ are uniformly bounded away from zero in $(t,x)$. Then, for some $c_0 >0$,  
\[
\KL{\rho_t}{e^{-f}}\le \KL{\rho_0}{e^{-f}}\exp(-c_0 t).
\]
\end{thm}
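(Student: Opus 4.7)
The plan is to differentiate the KL functional along the Sinkhorn flow and combine the Hessian lower bound with the log-Sobolev inequality (LSI) via a Grönwall argument. First, I would apply the chain rule for functionals on the Wasserstein space (see \cite[Chapter 10]{ambrosio2005gradient}) to the functional $\mathcal{F}(\rho) := \KL{\rho}{e^{-f}}$ evaluated along the Sinkhorn flow $(\rho_t)$. Since $\rho_t$ solves the continuity equation $\partial_t \rho_t + \div(\rho_t v_t)=0$ with velocity field $v_t$ given by \eqref{eq:velocity}, and since the Wasserstein gradient of $\mathcal{F}$ is $\xi_t := \nabla_{\mathbb{W}}\KL{\rho_t}{e^{-f}} = \nabla(f+\log\rho_t)$, the standard chain rule gives
\begin{equation*}
\frac{d}{dt}\KL{\rho_t}{e^{-f}} = \int \iprod{\xi_t(x),\, v_t(x)}\,d\rho_t(x) = -\int \iprod{\xi_t(x),\, (\nabla^2 u_t(x))^{-1}\xi_t(x)}\,d\rho_t(x).
\end{equation*}

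Next, I would use the assumed uniform lower bound on the eigenvalues of $(\nabla^2 u_t(x))^{-1}$: there exists $c>0$ such that $\lambda_{\min}((\nabla^2 u_t(x))^{-1})\ge c$ uniformly in $(t,x)$. Combined with the pointwise inequality $\iprod{\xi,(\nabla^2 u_t)^{-1}\xi}\ge c\lVert \xi\rVert^2$, this yields
\begin{equation*}
\frac{d}{dt}\KL{\rho_t}{e^{-f}} \le -c \int \lVert \nabla(f+\log\rho_t)(x)\rVert^2\,d\rho_t(x) = -c\, I(\rho_t\,\|\,e^{-f}),
\end{equation*}
where $I(\cdot\,\|\,e^{-f})$ denotes the relative Fisher information with respect to $e^{-f}$.

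The LSI hypothesis on $e^{-f}$ now provides the crucial comparison $\KL{\rho_t}{e^{-f}} \le \frac{1}{2 C_{\mathrm{LSI}}} I(\rho_t\,\|\,e^{-f})$ for some LSI constant $C_{\mathrm{LSI}}>0$. Substituting this into the previous display gives the differential inequality
\begin{equation*}
\frac{d}{dt}\KL{\rho_t}{e^{-f}} \le -2c\, C_{\mathrm{LSI}}\, \KL{\rho_t}{e^{-f}},
\end{equation*}
and Grönwall's inequality then yields the conclusion with $c_0 = 2c\, C_{\mathrm{LSI}}$.

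The main obstacle I anticipate is not the algebraic chain calculation but its rigorous justification: one needs enough regularity of $\rho_t$, of the Brenier map $\nabla u_t$, and of $\xi_t$ to both carry out the differentiation of $t \mapsto \KL{\rho_t}{e^{-f}}$ along the flow and to ensure that the Fisher information quantities are well-defined and finite along the trajectory. In particular, the identity $\frac{d}{dt}\KL{\rho_t}{e^{-f}} = \int \iprod{\xi_t, v_t}\,d\rho_t$ requires $\xi_t \in L^2(\rho_t)$, which in turn will likely follow from the existence theorem (\cref{thm:infexist}/\cref{thm:existlin}) together with the uniform Hessian bounds on $u_t$ assumed here. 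A secondary point is to verify that the assumed lower bound on $(\nabla^2 u_t)^{-1}$ indeed persists for all $t\ge 0$ (propagation of regularity along the PMA); this may require a short companion argument, or may be absorbed into the hypotheses of the formal statement.
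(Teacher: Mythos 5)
Your proposal is correct and follows essentially the same route as the paper's proof of \cref{lem:expcon}: differentiate $t\mapsto\KL{\rho_t}{e^{-f}}$ along the flow using the (sub)differential $\nabla\log\rho_t+\nabla f$, substitute $v_t=-(\nabla^2 u_t)^{-1}\nabla_{\mathbb{W}}\KL{\rho_t}{e^{-f}}$, lower-bound the resulting quadratic form by the eigenvalue bound on $(\nabla^2 u_t)^{-1}$, invoke the LSI to compare Fisher information with KL, and conclude by Gr\"onwall (the paper merely allows a time-dependent bound $h(t)$ and additionally derives a Wasserstein estimate via the HWI/Talagrand inequality).
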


Sufficient conditions under which our assumptions are valid have already been explored in Berman \cite{berman2020}. We note that the rate of convergence results in Theorems~\ref{thm:inftheo} and \ref{thm:infrate} recover the correct scaling in terms of $\vep$ which allows us to work under the scaling limit regime where the number of iterations scales like $1/\vep$ with $\vep\to 0$.  On the other hand, in existing rate of convergence results in  \cite{leger2021gradient,ghosal2022convergence,Conforti2021} cited above, the rates involve terms depending on $\vep$ which blow up in the aforementioned scaling limit regime as $\vep\to 0$. Having said that, our results do require stronger regularity conditions on $f$, $g$ and solutions of the parabolic Monge-Amp\`{e}re equation described in \cref{thm:infexist} (also see \eqref{eq:pma}).

\subsection{Torus setting}\label{sec:torus}
Our main results, as stated above, require certain regularity assumptions on the solutions of the PDE \eqref{eq:newPDE}. These assumptions have been established in the PDE literature in the specific case where the probability measures $\mu$ and $\nu$ are supported on the flat torus $\mathbb{T}^d:=\R^d/\mathbb{Z}^d$ (see \cref{rem:bermanver} for more details). Recall that $\mathbb{T}^d$ consists of equivalence classes $\{x+k:\, k\in\mathbb{Z}^d\}$ for $x\in [0,1)^D$. Let us identify the equivalence class $\{x+k:\, k\in\mathbb{Z}^d\}$ with $x$ for simplicity of notation. Therefore the standard $L^2$ metric on $\mathbb{T}^d$ is given by 
$$c_{\mathbb{T}^d}(x,y):=\inf\{\lVert x-y+k\rVert:\, k\in\mathbb{Z}^d\}, \qquad x,y\in\mathbb{T}^d.$$
The solution of the PDE \eqref{eq:newPDE} on the torus is then $\mathbb{Z}^d$-periodic, i.e., $u_t(x)=u_t(x=k)$ for $x\in [0,1)^d$ and $k\in\mathbb{Z}^d$. In the same way, the probability measures $\rho_k^{\vep}$s and $\rho_t$s, in Theorems \ref{thm:infexist} and \ref{thm:inftheo}, are also $\mathbb{Z}^d$-periodic. Similarly the Brownian motion which occurs in the context of \cref{thm:diffexist} can also be defined on the flat torus (see \cite[Examples 8 and 9, pages 229--230]{bhattacharya2023continuous}
for specific definitions).
Moreover, convexity of a function, the Legendre transform $u^*$ of $u: \mathbb{R}^d\to \mathbb{R}$ can be extended to the Riemannian setting, called $c_{\mathbb{T}^d}$-convex functions and the $c_{\mathbb{T}^d}$-Legendre transform; see, e.g., \cite{McCannPolar}. 
With these conventions, all the aforementioned results continue to hold using the same proofs as we have currently presented in the paper.

\subsection{Organization} The rest of the paper is organized as follows. In \cref{sec:mirror}, we introduce Wasserstein mirror gradient flows and argue that the Sinkhorn flow is a particular example of this larger class. We also zoom in on the Sinkhorn PDE and prove its existence, rate of convergence, and connections to the linear optimal transport distance. In \cref{sec:diffmirr}, we obtain a Mckean-Vlasov SDE whose marginal distributions follow the Sinkhorn PDE. In \cref{sec:mcconst}, we move our attention to the Sinkhorn algorithm and prove that its $X$ marginals converge to the marginals of the Sinkhorn PDE, with quantitative error bounds in terms of the regularization parameter $\vep$. Finally, in Sections \ref{sec:pfres} and \ref{sec:mainresultlems}, we prove all the technical lemmas.

\section{Mirror gradient flows: Euclidean and Wasserstein}\label{sec:mirror}

\subsection{Mirror descent and mirror gradient flow in Euclidean spaces}

Mirror descent was originally proposed by Nemirovsky and Yudin in \cite{nemirovskii83} as a generalization of gradient descent. For a modern account and its connections to other gradient based optimization schemes, see \cite[Sections 2.1.3 and B.2]{Wilson18}.




Assume that $u:\R^d\rightarrow \R$ is a differentiable convex function such that $\nabla u: \R^d \rightarrow \R^d$ is a diffeomorphism. This diffeomorphism induces two global coordinate charts on $\R^d$, namely $x\leftrightarrow x^u:= \nabla u(x)$ (also see \cref{def:pdcor}). This map is called the \textit{mirror map} in the literature. Given a differentiable function $F: \rr^d \rightarrow \rr$, its Euclidean mirror gradient flow can be described by the ODE (with a given initial condition)
\[
\frac{d}{dt} x^u_t = - \frac{\partial}{\partial x} F(x_t),\; t\ge 0.
\]
Applying chain rule to the LHS, we get the equivalent alternative ODE
\begin{equation}\label{eq:mirrorflowEuclid}
\frac{d}{dt} x_t = -\frac{\partial x\hfill}{\partial x^u}(x^u_t) \frac{\partial F}{\partial x}(x_t)= - \frac{\partial F\hfill}{\partial x^u}(x_t). 
\end{equation}
In the last equality we are utilizing the idea that $x \mapsto x^u$ is a diffeomorphism on $\R^d$ and gradients of functions can be taken with respect to either coordinate system. When $u(x)=\frac{1}{2} \norm{x}^2$, both ODEs coincide and we recover the usual Euclidean gradient flow. 

The Euclidean mirror gradient flow can be interpreted as a ``true'' gradient flow if we change the manifold structure of $\R^d$ to make it a so-called \textit{Hessian Riemannian manifold}. See \cite{Hessian97} and \cite[Section B.2.4]{Wilson18}. Roughly, at a point $x\in \R^d$, consider the positive definite Hessian matrix $\nabla^{2} u(x)$. For any tangent vectors $y$ at $x$, change the Riemannian metric to $y^T \nabla^{2} u(x) y$. This induces a new Riemannian manifold on $\R^d$. The mirror gradient flow \eqref{eq:mirrorflowEuclid} is the gradient flow of $F$ on this Hessian Riemannian manifold. 

\subsubsection{Examples.} Consider the following examples where we take $d=1$, $F(x)=\frac{1}{2}x^2$, and different choices of the mirror function $u$. Notice that the function $F$ admits a unique minimizer at zero. For each of the following flows, the initial condition is $x_0=1$.

\begin{enumerate}[(i)]
    \item $u(x)=\frac{1}{2}x^2$. In this case $x^u=x$. Thus, we get back our usual gradient flow equation $\dot{x}_t=-\nabla_x F(x_t)=-x_t$. For our given initial condition, the solution is $x_t=\exp(-t)$, $t\ge 0$. It converges to zero exponentially fast. 
    \item $u(x)=x^4$. Thus $x^u=4x^3$ and $\frac{\partial x^u}{\partial x}=12x^2$. Thus, 
    \[
    \dot{x}_t= -\nabla_{x^u} F(x_t)= -\frac{1}{12 x_t^2} x_t= -\frac{1}{12 x_t}.
    \]
    The solution of this ODE, with $x_0=1$, is $x_t=\sqrt{1-t/6}$, $0\le t \le 6$. The solution does not extend beyond $[0,6]$ due to a singularity at $t=6$ when $x_6=0$, the minimizer of $F$. The flow converges to the minimizer in finite time, unlike the previous example. 
    \item $u(x)=1/x$ for $x>0$. Thus $x^u=-\frac{1}{x^2}$ and $\frac{\partial x^u}{\partial x\hfill}=\frac{2}{x^3}$. The mirror flow equation is given by $\dot{x}_t= -\frac{1}{2} x_t^4$. The unique solution with $x_0=1$ is $x_t=\left(1 + 3t/2 \right)^{-1/3}$. The solution is well defined for all $t\ge 0$ and converges to the minimizer of $F$ just polynomially. 
\end{enumerate}




In all these examples it is clear that the behavior of the Hessian of the mirror function $u$ in a neighborhood of the minimizer of $F$ plays a very important role. If this Hessian is very close to zero, the mirror flow speeds up significantly more than the gradient flow of $F$. This is an intuition that we expect to carry over to the Wasserstein set-up as well.

\subsection{An informal description of Wasserstein mirror gradient flows}\label{sec:wassmirrorflowformal}

Recall that $\ptac$ denote the set of Lebesgue absolutely continuous Borel probability measures on $\R^d$ with finite second moments. Equip this space with the Wasserstein $2$ metric. 
We take the point of view (originally due to Otto \cite{Otto_2001}) that this Wasserstein space can be thought of as an ``infinite dimensional Riemannian manifold'' in the following sense \cite[Chapter 8]{ambrosio2005gradient}. At any $\rho \in \ptac$ define the tangent space by $\tanspace_\rho = \overline{\left\{ \nabla g,\; g \in \diffcont_c^\infty \right\}}$,
where the closure is taken in $L^2(\rho)$ \cite[Section 8.4]{ambrosio2005gradient}. The metric tensor is given by the $L^2$  norm. 


For suitable functions $F:\ptac\rightarrow \R \cup \{\infty\}$, the Wasserstein gradient at an absolutely continuous probability measure with a $\mathcal{C}^1$ density is the element $\nabla_{\wass}F(\rho)\in \tanspace_\rho$, characterized by the following identity \cite[Lemma 10.4.1]{ambrosio2005gradient} that holds for all $v\in \tanspace_\rho$:
\[
\iprod{\nabla_{\wass}F(\rho), v }_{\ltwo(\rho)}= \int_{\R^d} \frac{\delta F}{\delta \rho}(x) \dot{\rho}(x) dx,
\]
where $\dot{\rho}= - \div(v\rho)$. The Wasserstein gradient flow is given by the continuity equation \cite[Section 11.1]{ambrosio2005gradient} $\frac{\partial}{\partial t} \rho_t = \div{\left(\nabla_{\wass}F(\rho_t) \rho_t\right)}$.


Let 
\begin{equation}\label{eq:mirror}
U(\rho)=\frac{1}{2}\wass_2^2\left(\rho, e^{-g}\right),
\end{equation}
which is known to be convex over generalized geodesics with base $e^{-g}$ (\cite[Lemma 9.2.1 and Definition 9.2.2]{ambrosio2005gradient}). 
We will use $U$ as a ``mirror'' to generate a ``mirror potential'' given by 
\[
\rho \mapsto \rho^U:=\nabla_{\wass}U(\rho).
\]
Notice that while $\rho$ is a measure, its mirror potential $\rho^U$ is a function in $\ltwo(\rho)$. For the special case of $U(\rho)=\frac{1}{2}\wass_2^2\left(\rho, e^{-g}\right)$, $\rho^U$ is the  Kantorovich map (gradient of the Kantorovich potential) transporting $\rho$ to $e^{-g}$ \cite[Theorem 10.4.12]{ambrosio2005gradient}. 

By analogy with the Euclidean space, for a suitable function $F: \probspace\rightarrow \R\cup \{\infty\}$, one may define the Wasserstein mirror gradient flow by two equivalent PDEs. The first one takes time derivative in the mirror potential.
\begin{equation}\label{eq:mirrorgradflow}
\frac{\partial}{\partial t} \rho^U_t = - \nabla_{\wass} F(\rho_t),
\end{equation}
which represents an equality of two elements in $\tanspace_{\rho_t}$ viewed as a subspace of $\ltwo(\rho_t)$.

Let $\nabla u$ denote the Brenier map transporting $\rho$ to $e^{-g}$ for a convex function $u$. Then 
\begin{equation}\label{eq:KPtoBP}
\nabla u(x) =  x - \rho^U(x).
\end{equation}
In terms of the convex Brenier potentials, \eqref{eq:mirrorgradflow} can be equivalently written as 
\begin{equation}\label{eq:mirrorgradflow2}
\frac{\partial}{\partial t} \nabla u_t = \nabla_{\wass} F(\rho_t).
\end{equation}
When $\nabla_{\wass} F(\rho_t)= \nabla \frac{\delta F}{\delta \rho_t}$, the gradient of the first variation of $F$ at $\rho_t$ \cite[Lemma 10.4.1]{ambrosio2005gradient}, by removing the gradients on both sides of the above, we get the PDE.
\begin{equation}\label{eq:mirrorgradflow3}
\frac{\partial u_t}{\partial t}(x)= \frac{\delta F}{\delta \rho_t}(x), \quad \text{where $u_t$ is convex and} \; (\nabla u_t)_{\#} \rho_t=e^{-g}.
\end{equation}
As shown in the following Section \ref{sec:pmasec}, this family of PDEs includes the parabolic Monge-Amp\`ere \eqref{eq:pma2}.

The second equivalent way of describing the Wasserstein mirror gradient flow is to take the time derivative of the curve in the space of measures, i.e., specify a continuity equation for the curve on Wasserstein space.  

Fix an absolutely continuous measure $\rho$. The Hessian matrix $\nabla^2 u$ is defined $\rho$-a.s. in the Alexandrov sense and is positive semidefinite. Define a new metric tensor on $\tanspace_{\rho}$ by 
\begin{equation}\label{eq:hessian-metric}
\iprod{v_1, v_2}_U= \iprod{v_1, \left(\nabla^2 u\right) v_2}_{\ltwo(\rho)}, \quad v_1, v_2 \in \tanspace_\rho. 
\end{equation}
This defines a new Riemannian gradient on the Wasserstein space $\nabla_{\wass}^U$ and a new gradient flow equation
\begin{equation}\label{eq:newgradflow}
\frac{\partial}{\partial t} \rho_t = \div\left( \nabla_{\wass}^U F(\rho_t) \rho_t \right).
\end{equation}
When it exists, we call this flow the Wasserstein mirror gradient flow of $F$ with respect to $U(\rho)=\frac{1}{2}\wass_2^2\left(\rho, e^{-g}\right)$. While $\nabla_{\wass} F(\rho)$ is given by the gradient of the first variation of $F$, $\nabla_x \left(\frac{\delta F}{\delta \rho}\right)$, for $\nabla_{\wass}^U F(\rho_t)$ we get
\begin{equation}\label{eq:expgrad}
\nabla_{\wass}^U F(\rho_t)= \nabla_{x^{u_t}}\left(\frac{\delta F}{\delta \rho_t}\right), \quad \text{where $u_t$ is convex and}\; (\nabla u_t)_{\#} \rho_t= e^{-g}.
\end{equation}
Since $\frac{\partial x\hfill}{\partial x^u} = \left(\nabla^2 u \right)^{-1}$, \eqref{eq:expgrad} can also be written as \[
\nabla_{\wass}^U F(\rho_t)=  \left(\nabla^2 u \right)^{-1} \nabla_{\wass} F(\rho)\]
which is consistent with the metric \eqref{eq:hessian-metric}.
Thus \eqref{eq:newgradflow} and \eqref{eq:expgrad} describe the Wasserstein mirror gradient flow as a continuity equation.

The connection between \eqref{eq:newgradflow} and \eqref{eq:mirrorgradflow3} is that if $(u_t,\; t\ge 0)$ is a solution of the latter, then $\rho_t:=\left( \nabla u_t\right)^{-1}_{\#} e^{-g}$ 
 satisfies the former. We show this below assuming that the solution of \eqref{eq:mirrorgradflow3} satisfies, for each $t$, $\nabla^2 u_t(x)$ is invertible and the inverse is continuous in $x$, $\rho_t$ a.s.. 

 Let $T_t=\nabla u_t$ denote the transport map. Then, by the chain rule,  
\begin{align}\label{eq:inverse_derivative}
    \left[\frac{\partial}{\partial t} (T_t)^{-1} \right]( T_t(x)) = - [\nabla T_t(x)]^{-1}   \left[\frac{\partial}{\partial t} T_t \right](x).
\end{align}
Notice that $[\nabla T_t(x)]^{-1} =\left(\nabla^2u_t(x)\right)^{-1}$.
Thus, from \eqref{eq:mirrorgradflow2}, 
\[
\left[\frac{\partial}{\partial t} (T_t)^{-1} \right]( T_t(x)) = - \left(\nabla^2u_t(x)\right)^{-1}\nabla \frac{\delta F}{\delta \rho_t}=-\nabla_{x^{u_t}}\frac{\delta F}{\delta \rho_t}=- \nabla_{\wass}^UF(\rho_t). 
\]

Let $\xi$ be a smooth, compactly supported test function. Then 
\[
\begin{split}
\frac{d}{dt} &\int \xi(x) \rho_t(x)dx = \frac{d}{dt} \int \xi\left(T_t^{-1}(y)\right)e^{-g(y)}dy\\
&=\int \nabla \xi\left(T_t^{-1}(y)\right)\cdot  \left[\frac{\partial}{\partial t} (T_t)^{-1} \right](y) e^{-g(y)}dy  \\
&=\int \nabla \xi(x) \left[\frac{\partial}{\partial t} (T_t)^{-1} \right](T_t(x)) \rho_t(x)dx=- \int \iprod{\nabla \xi(x), \nabla_{\wass}^UF(\rho_t)} \rho_t(x)dx.
\end{split}
\]
This proves that $(\rho_t,\; t\ge 0)$ is a weak solution of the continuity equation \eqref{eq:newgradflow}. 

Although, to the best of our knowledge, this particular mirror gradient flow on the Wasserstein space is new in the literature, some related ideas that involve modifications to the usual Wasserstein geometry and/or considering gradient flows on them can be found in other recent papers such as \cite{SVGD17,RankinWong23, WassersteinNewton}.  

\begin{remark}\label{rmk:hori-ver-corr}
    The paragraph around  \eqref{eq:inverse_derivative} gives a  geometric way to understand the connection between \eqref{eq:mirrorgradflow3} and 
    \eqref{eq:newgradflow}. Namely, if we consider the graph of $\nabla u_t$ (the mirror map) in the product space $\mathbb{R}^d \times \mathbb{R}^d$, the vector field $\nabla_{\wass} F(\rho_t)$ gives the vertical variation of the graph, while the vector field $-\nabla_{\wass}^U F(\rho_t)=-  \left(\nabla^2 u \right)^{-1} \nabla_{\wass} F(\rho)$ gives its horizontal counterpart, which gives the same variation of the graph. The equation \eqref{eq:inverse_derivative} gives the precise relation between the horizontal and vertical variation.
    Note that the distribution $\rho_t$ is the result of the drift $\nabla_{\wass}^U F(\rho_t)$ via the continuity equation. The difference between the mirror flow and the usual Wasserstein gradient flow, is that the vector field $\nabla_{\wass} F(\rho_t)$ provides variation on $\rho_t$ not directly in the continuity equation but indirectly through the aforementioned vertical-horizontal correspondence, which in turn is provided by the mirror map. 
\end{remark}

\subsection{Parabolic Monge-Amp\`{e}re and the mirror gradient flow of Kullback-Leibler divergence}\label{sec:pmasec}

The parabolic Monge-Amp\`{e}re (PMA) is the following partial differential equation (PDE): 
\begin{equation}\label{eq:pma}
    \frac{\partial u_t}{\partial t\hfill} (x)=f(x)-g(x^{u_t})+\ldet\left( \frac{\partial x^{u_t}}{\partial x\hfill} \right),
\end{equation}
for some initial function $u_0:\R^d\to\R$. The idea is that, as $t\rightarrow \infty$, the LHS of \eqref{eq:pma} should converge to zero, whereby $u_t$ should converge to the solution of the Monge-Amp\`{e}re equation for transporting $e^{-f}$ to $e^{-g}$. Although the PMA has been around in the literature, there has been some recent appearances in the context of optimal transport. For example,  \cite{kim2012parabolic} studied it for general cost functions on closed manifolds, while 
\cite{kitagawa2012parabolic} studies it for bounded domains. On the other hand, \cite{deb2025no} recently used it for generative modeling and variational inference.

There is a more convenient form for us that we will utilize frequently. This needs the well-known change of measure lemma (see \cite[Theorem 1.6.9]{durrettprob}) which we note here for easy reference.
\begin{lmm}\label{lem:jacobian}(Change of measure)
    Let $e^{-a}$ be a probability density function on $\R^d$ for some function $a:\R^d\rightarrow \R$. For a $\diffcont^2$ strictly convex function $\phi:\R^d\rightarrow \R$, let $e^{-b}$ be the pushforward $(\nabla \phi)_{\#} e^{-a}$ Then,
    \begin{equation}\label{eq:jacobian}
    b(x^\phi) = a(x) + \ldet \frac{\partial x^\phi}{\partial x\hfill}.
    \end{equation}
\end{lmm}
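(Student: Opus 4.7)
The plan is to derive this identity as a direct consequence of the standard Jacobian change-of-variables formula for pushforward densities. Since $\phi$ is $\mathcal{C}^2$ and strictly convex, the gradient map $\nabla\phi$ is (at least locally) a $\mathcal{C}^1$ diffeomorphism with positive-definite differential $\nabla^2\phi(x) = \partial x^\phi / \partial x$; this is the only regularity fact we need, and it justifies both computing $(\nabla\phi)_\#(e^{-a}dx)$ via the area formula and taking the logarithm of the determinant in the conclusion.

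The key steps I would carry out, in order, are the following. First, let $X$ be a random vector with density $e^{-a}$ and set $Y = \nabla\phi(X) = X^\phi$. By the change-of-variables formula, the density of $Y$ at the point $x^\phi$ is
\begin{equation*}
e^{-b(x^\phi)} \;=\; e^{-a(x)} \cdot \left|\det \frac{\partial x^\phi}{\partial x}\right|^{-1},
\end{equation*}
where $x = (\nabla\phi)^{-1}(x^\phi)$. Second, since $\nabla^2\phi$ is positive definite, $\det(\partial x^\phi/\partial x) > 0$, so the absolute value can be dropped. Third, taking $-\log$ of both sides yields
\begin{equation*}
b(x^\phi) \;=\; a(x) + \log\det\frac{\partial x^\phi}{\partial x},
\end{equation*}
which is precisely \eqref{eq:jacobian}.

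There is no substantive obstacle here; the statement is essentially a bookkeeping restatement of the Monge--Amp\`ere Jacobian identity expressed in the mirror-coordinate notation of \cref{def:pdcor}. The only minor point worth flagging is the regularity needed for $\nabla\phi$ to be a bona fide diffeomorphism on the relevant support, which is guaranteed by the $\mathcal{C}^2$ and strict convexity hypotheses combined with the implicit assumption that the pushforward $(\nabla\phi)_\#e^{-a}$ admits a density $e^{-b}$ in the first place.
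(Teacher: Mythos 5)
Your proof is correct and is exactly the standard argument; the paper does not even prove this lemma itself but simply cites the classical change-of-variables theorem (it refers to Durrett), so your Jacobian computation $e^{-b(x^\phi)}=e^{-a(x)}\,\bigl(\det\frac{\partial x^\phi}{\partial x\hfill}\bigr)^{-1}$ followed by taking $-\log$ is the same route. No issues.
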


Now, let $h_t:\R^d\rightarrow \R$ be such that $e^{-h_t}$ is the pushforward of the measure $e^{-g}$ by the inverse of the map $x\mapsto x^{u_t}$. Then, by Lemma \ref{lem:jacobian}, it follows that \eqref{eq:pma} can be alternatively written as 
\begin{equation}\label{eq:pma2}
\frac{\partial u_t}{\partial t}(x)= f(x) - h_t(x) = \log \left( \frac{e^{-h_t(x)}}{e^{-f}}. \right).
\end{equation}

Consider the function on the Wasserstein space 
\begin{equation}\label{eq:choiceofF}
F(\rho):=\begin{cases}
\KL{\rho}{e^{-f}},& \text{when $\rho$ is dominated by {\color{red}$e^{-f}$}}, \\
+\infty,& \text{otherwise}.  
\end{cases}
\end{equation}
When $f$ is convex, it is well-known that $F$ is geodesically (and generalized geodesically) convex and lower semicontinuous. 

The first variation of $F$ (see \cite[Lemma 10.4.1]{ambrosio2005gradient}) at $\rho_t=e^{-h_t}$ is given by $\frac{\delta F}{\delta \rho_t}= f(x) - h_t(x)$, 
(ignoring an additive constant which does not affect its gradient $\nabla_{\wass} F = \nabla_x  \frac{\delta F}{\delta \rho}$).
Thus, \eqref{eq:pma2} may be written as $\dot{u}_t=\frac{\delta F}{\delta \rho_t}$ which is exactly \eqref{eq:mirrorgradflow3} and, by taking a gradient on both sides, we recover  \eqref{eq:mirrorgradflow2}. In our language, \textit{the PMA is the evolution of the mirror potential for the mirror gradient flow of relative entropy with respect to $e^{-f}$, where the mirror is generated by the function in \eqref{eq:mirror}}.

By \eqref{eq:newgradflow} and \eqref{eq:expgrad}, the mirror gradient flow itself satisfies the continuity equation 
 \begin{equation*}
 \partial_t \et+\div{(\et v_t)}=0, \quad v_t(x)=-\frac{\partial\hfill}{\partial x^{u_t}} (f-h_t)(x),
 \end{equation*}
 where $(u_t,\; t\ge 0)$ solves the PMA \eqref{eq:pma}. This is the same equation we introduced in \eqref{eq:velocity}. We call $(\rho_t,\; t\ge 0)$ the Sinkhorn flow, and we will show that this is the limit of iterates of the Sinkhorn algorithm. Note that, alternatively, from \eqref{eq:pma2}, $u_t$ may also be expressed as
 \[
 u_t(x)= u_0(x) + \int_0^t (f(x) + \log \rho_s(x))ds, \quad t\ge 0.
 \]

\noindent Let us now formalize the existence of a solution to \eqref{eq:velocity}. To state this, we need some assumptions on the solution of the PMA \eqref{eq:pma}. 
\begin{assm}\label{asn:solcon}
Assume that $f,g,u_0$ are such that the PMA \eqref{eq:pma} admits a solution $\left( u_t,\; t\ge 0 \right)$. Additionally,
\begin{enumerate}[(i)]
\item Given any $T>0$, there exists constants $A_T>0$ and $B_T>0$ such that 
\begin{align}\label{eq:curvbd}
\inf_x \inf_{t\in [0,T]}\lmn\left(\frac{\partial x^{u_t}}{\partial x\hfill}\right)\geq A_T,\quad \sup_x \sup_{t\in [0,T]} \lmx\left(\frac{\partial x^{u_t}}{\partial x\hfill}\right)\le B_T.
\end{align}
\item The map $(t,x)\mapsto u_t(x)$  lies in $\diffcont^{1,2}([0,\infty)\times \R^d)$\footnote{The reader is warned that the  $\diffcont^{k,\ell}$ notation is not to be confused with the H\"{o}lder class of functions}. Further, for every $T>0$, all the corresponding mixed partial derivatives (in space and time) are bounded on $[0,T]\times \R^d$.
\item The first two derivatives of $f(\cdot)$ and $g(\cdot)$ are bounded and uniformly continuous. Further, the first four spatial derivatives of $\{u_t\}_{t\in [0,T]}$ and $\{u_t^*\}_{t\in [0,T]}$ are bounded and uniformly continuous, for all $T>0$.
 \end{enumerate}
\end{assm}

Given the solution of the PMA $(u_t,\; t\ge 0)$ (as in \eqref{eq:pma}), note that $\left(w_t=u_t^*,\; t\ge 0 \right)$ denotes the corresponding process of convex conjugates. By \cref{asn:solcon}, part (i), both $u_t$ and $w_t$'s are all strictly convex $\diffcont^2$ functions. Therefore, $\nabla u_t(\cdot)$ and $\nabla w_t(\cdot)$ are both diffeomorphisms on $\R^d$. In view of~\cref{def:pdcor}, we then have a system of dual coordinates on $\R^d$ given by $x\mapsto \xsut$ (or equivalently $y\mapsto y^{w_t}$), one for each $t\geq 0$. We show in Lemma \ref{lem:dualPMA} that the family $(w_t)_{t\ge 0}$ is also a solution of a PMA that we call the \text{dual PMA}. The following observation is an immediate consequence of \cref{asn:solcon} which we note as a remark below. 

\begin{remark}\label{rem:dualasn}
 If \cref{asn:solcon} holds for the solution of the PMA $(u_t)_{t\ge 0}$, then  the assumptions (i)--(iii)  also hold for $(w_t=u_t^*)_{t\ge 0}$, as $\nabla u_t = (\nabla w_t)^{-1}$, and vice versa. This is relevant for \cref{lem:dualPMA} below.
\end{remark}

\begin{remark}\label{rem:bermanver}
Sufficient conditions for \cref{asn:solcon} have been studied in the literature under different assumptions on the supports of the probability measures $e^{-f}$ and $e^{-g}$, and the initializer $u_0$ for \eqref{eq:pma}. For example, in \cite[Proposition 4.5]{berman2020}, that if $u_0$ is four times continuously  differentiable, $f$ and $g$ are twice continuously differentiable, and $e^{-f}$, $e^{-g}$ are supported on the torus, then \cref{asn:solcon} is satisfied.  Similar results have also been established for compact Riemannian manifolds (see \cite[Theorem 1.1]{kim2012parabolic}), and general bounded convex domains (see \cite[Theorem 3.1]{kitagawa2012parabolic}, \cite[Theorem A]{Tang2013}, and \cite[Theorem 1.1]{Abedin2020}. A simple case when \cref{asn:solcon} holds for probability measures supported on $\R^d$ is where both $\mu$ and $\nu$ are Gaussian probability densities (see Examples \ref{ex:loc} and \ref{ex:scale}).

The most critical part of \cref{asn:solcon} is the curvature bound in \eqref{eq:curvbd}. When the probability measures are supported on $\R^d$, in a recent paper \cite[Lemma 2.2]{chiarini2024semiconcavity}, the authors show that the Sinkhorn potentials (see \eqref{eq:twostepit}) satisfy the curvature condition \eqref{eq:curvbd}, uniformly for all small enough $\vep$, provided that the Hessians of $f$, $g$ are bounded above and below by positive constants times identity, and both the initial potentials are strongly convex. Given the connections between the Sinkhorn potentials and the PMA \eqref{eq:pma} established in \cite[Lemma 4.4]{berman2020}, this suggests that the PMA \eqref{eq:pma} should also satisfy \eqref{eq:curvbd} under the additional log-concavity assumptions. A rigorous proof of this is left for future research.

\end{remark}

We are now in position to state our first main result which features the existence of a strong solution to \eqref{eq:velocity}. \begin{thm}\label{thm:existlin}
Fix $T>0$ and suppose that \cref{asn:solcon} holds. Recall $w_t=u_t^*$ from Remark \ref{rem:dualasn}. Consider the push-forward $\rho_t=(\nabla w_t)_{\#}e^{-g}$. Then $(\rho_t)_{t\in [0,T]}$ is an absolutely continuous curve in the $2$-Wasserstein space and it is a strong solution to \eqref{eq:velocity}. 
\end{thm}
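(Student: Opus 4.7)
The plan is to explicitly compute $\rho_t$ and its time derivative via the change-of-variables formula, then match the result against the velocity field in \eqref{eq:velocity}. First, using \cref{asn:solcon}(i), $\nabla w_t$ is a $\diffcont^1$-diffeomorphism on $\R^d$, so $\rho_t = (\nabla w_t)_{\#} e^{-g}$ is a well-defined absolutely continuous probability measure. By \cref{lem:jacobian} applied with $\phi = w_t$, $a = g$, and $b = h_t := -\log\rho_t$, and using $\nabla^2 w_t(x^{u_t}) = (\nabla^2 u_t(x))^{-1}$, we identify the density
\[
h_t(x) \;=\; g(x^{u_t}) \;-\; \log\det \nabla^2 u_t(x),
\]
so that $h_t = -\log\rho_t$ exactly matches the RHS of the PMA so that \eqref{eq:pma} rewrites as \eqref{eq:pma2}:
\[
\frac{\partial u_t}{\partial t}(x) \;=\; f(x) - h_t(x) \;=\; f(x) + \log\rho_t(x).
\]

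The next step is to compute the time derivative of the transport map $\nabla w_t$. Differentiating the identity $\nabla u_t(\nabla w_t(y)) = y$ with respect to $t$ (justified by the joint $\diffcont^{1,2}$-regularity in \cref{asn:solcon}(ii)) and setting $x = \nabla w_t(y)$, we obtain
\[
\frac{\partial}{\partial t}\nabla w_t(y) \;=\; -\bigl(\nabla^2 u_t(x)\bigr)^{-1}\, \nabla\!\left(\frac{\partial u_t}{\partial t}\right)\!(x) \;=\; -\bigl(\nabla^2 u_t(x)\bigr)^{-1}\, \nabla\bigl(f + \log\rho_t\bigr)(x) \;=\; v_t(x),
\]
which is precisely the velocity field in \eqref{eq:velocity}. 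In particular, the Eulerian velocity along the curve $t\mapsto \nabla w_t(y)$ is $v_t$ evaluated at the current location, exactly as required.

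With this identity in hand, the continuity equation follows from a standard test-function computation. For any $\xi \in \diffcont_c^\infty(\R^d)$,
\[
\frac{d}{dt}\!\int \xi\,d\rho_t \;=\; \frac{d}{dt}\!\int \xi(\nabla w_t(y))\,e^{-g(y)}\,dy \;=\; \int \nabla\xi(\nabla w_t(y))\cdot v_t(\nabla w_t(y))\,e^{-g(y)}\,dy \;=\; \int \nabla\xi \cdot v_t\,d\rho_t,
\]
which is the weak form of $\partial_t \rho_t + \div(\rho_t v_t) = 0$. Interchange of $\partial_t$ and the integral is legitimate because \cref{asn:solcon}(i)--(iii) provides uniform-in-$(t,x)$ bounds on $\partial_t \nabla w_t$ (all factors on the RHS of the first display are bounded by hypothesis) and because $\nabla \xi$ has compact support. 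For the absolute continuity, the bound
\[
\wass_2^2(\rho_s,\rho_t) \;\le\; \int \bigl|\nabla w_t(y) - \nabla w_s(y)\bigr|^2 e^{-g(y)}\,dy \;\le\; |t-s|^2\!\int_s^t\!\!\int \bigl|v_\tau(\nabla w_\tau(y))\bigr|^2 e^{-g(y)}\,dy\,d\tau\cdot\frac{1}{|t-s|}
\]
together with the uniform $L^\infty$-bound on $v_t$ yields Lipschitz continuity, hence absolute continuity, of $t\mapsto \rho_t$ in $\wass_2$; equivalently, by \cite[Thm.~8.3.1]{ambrosio2005gradient}, $\int_0^T\|v_t\|_{L^2(\rho_t)}\,dt < \infty$.

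The main technical hurdle is verifying the pointwise regularity needed to upgrade the weak continuity equation to a \emph{strong} one (and to justify the interchange above rigorously). The quantities $\rho_t$, $v_t$, and their gradients in $x$ are composite expressions involving $\nabla u_t$, $\nabla^2 u_t$, and the log-determinant of the Hessian; showing they are $\diffcont^1$ jointly in $(t,x)$ with bounded derivatives on $[0,T]\times\R^d$ is where the full strength of \cref{asn:solcon}(iii) (boundedness and uniform continuity of up to four spatial derivatives of $u_t$, together with the derivatives of $f,g$) is used. Once this regularity is established, integration by parts in the weak identity yields the classical PDE, and the theorem is proved.
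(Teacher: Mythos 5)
Your proposal is correct, but it follows a genuinely different route than the paper's proof. The paper verifies the continuity equation in Eulerian form: it computes $\partial_t h_t$ directly from \eqref{eq:com1} and, separately, $\div(v_t\rho_t)/\rho_t$ via the product and chain rules, and the cancellation hinges on the log-determinant identity of \cref{lem:tensorel}; absolute continuity is then obtained by bounding $\sup_{x,t\in[0,T]}\lVert v_t(x)\rVert$ and invoking \cite[Theorem 8.3.1]{ambrosio2005gradient}. You instead argue in Lagrangian form: differentiating $\nabla u_t(\nabla w_t(y))=y$ in $t$ gives $\partial_t\nabla w_t(y)=v_t(\nabla w_t(y))$, from which the weak continuity equation and the $\wass_2$-Lipschitz bound follow by pushing forward $e^{-g}$. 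This is essentially the mechanism the paper uses only heuristically in Section~\ref{sec:wassmirrorflowformal} (around \eqref{eq:inverse_derivative}) and again, in disguise, in the proof of \cref{prop:metderlin} via \cref{lem:dualPMA} and \eqref{eq:labelgrad}; your route buys a shorter, more geometric argument that avoids \cref{lem:tensorel} entirely, while the paper's direct computation buys the strong (pointwise) equation with no separate upgrade step. Two points in your write-up deserve to be made explicit rather than asserted: (i) the uniform bound on $v_t$ is not literally "by hypothesis" --- you need $\nabla\log\rho_t=-\nabla h_t=\nabla\ldet\big(\tfrac{\partial x^{u_t}}{\partial x}\big)-\big(\tfrac{\partial x^{u_t}}{\partial x}\big)\tfrac{\partial g}{\partial x^{u_t}}(x^{u_t})$, which is controlled by \cref{asn:solcon} (i) and (iii) through the third derivatives of $u_t$, exactly as in the paper's absolute-continuity step; and (ii) the passage from the weak identity to a strong solution, which you flag as the main hurdle, should be carried out: with $\rho_t$ and $v_t$ jointly $\diffcont^{1}$ in $(t,x)$ (again from \cref{asn:solcon} (ii)--(iii)), an integration by parts in your test-function identity and continuity of the integrand give the classical PDE. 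Neither point is a flaw in the approach, but as written they are sketched rather than proved.
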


The proof of \cref{thm:existlin} depends on the following change of coordinate lemma that will be used multiple times in this paper.

\begin{lmm}\label{lem:tensorel}
    Suppose that $\phi:\R^d \to\R$ is strictly convex such that the function $\nabla \phi: \R^d \rightarrow \R^d$ is a $\diffcont^2$ diffeomorphism. That is both  $\nabla \phi$ and its inverse $\nabla \phi^*$ are twice continuously differentiable. Then, 
    \begin{equation}\label{eq:tensorelpf}
    \frac{\partial}{\partial \xsph_j}\left(\log\det\left(\frac{\partial x^\phi}{\partial x\hfill}\right)\right)=-\sum_{\ell=1}^d \frac{\partial^2 x_j}{\partial x_{\ell}\partial \xsph_{\ell}},
    \end{equation}
    for all $j\in [d]$ and $x\in\R^d$.
\end{lmm}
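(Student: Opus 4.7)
The plan is to reduce the identity to a comparison of two explicit tensor sums, both expressible through the third derivative of $\phi^*$, and then use the full symmetry of third partial derivatives to match them.

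First I would use the inversion relation: since $\nabla\phi$ and $\nabla\phi^*$ are mutually inverse $\mathcal{C}^2$ diffeomorphisms, the matrix $A(x):=\frac{\partial x^\phi}{\partial x}=\nabla^2\phi(x)$ satisfies $A(x)^{-1} = (\nabla^2\phi^*)(x^\phi)$, and consequently $\log\det A(x) = -\log\det(\nabla^2\phi^*)(x^\phi)$. Viewing both sides as functions of $x^\phi$ and applying Jacobi's formula at the point $x^\phi$, together with the identity $((\nabla^2\phi^*)^{-1})(x^\phi) = \nabla^2\phi(x) = A$, I obtain
\begin{equation*}
\frac{\partial}{\partial x^\phi_j}\log\det A(x) \;=\; -\sum_{m,n} A_{mn}\,\partial^3_{jmn}\phi^*(x^\phi),
\end{equation*}
where $\partial^3_{jmn}\phi^*$ denotes the third partial derivative of $\phi^*$ in the indicated indices.

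Second I would expand the right hand side of \eqref{eq:tensorelpf}. Since $\frac{\partial x_j}{\partial x^\phi_\ell} = (\nabla^2\phi^*)_{j\ell}(x^\phi)$, the chain rule combined with $\frac{\partial x^\phi_m}{\partial x_\ell} = A_{m\ell}$ gives
\begin{equation*}
\frac{\partial^2 x_j}{\partial x_\ell\,\partial x^\phi_\ell}
\;=\; \frac{\partial}{\partial x_\ell}\bigl[(\nabla^2\phi^*)_{j\ell}(x^\phi(x))\bigr]
\;=\; \sum_{m} A_{m\ell}\,\partial^3_{j\ell m}\phi^*(x^\phi).
\end{equation*}
Summing over $\ell$ then yields $\sum_\ell \frac{\partial^2 x_j}{\partial x_\ell\partial x^\phi_\ell} = \sum_{\ell,m} A_{m\ell}\,\partial^3_{j\ell m}\phi^*(x^\phi)$.

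Finally I would compare the two expressions. Using symmetry of $A$ (since $A=\nabla^2\phi$) and the full symmetry of $\partial^3\phi^*$ in its three lower indices (Schwarz, justified by the $\mathcal{C}^2$ regularity of $\nabla\phi^*$, which makes $\phi^*$ of class $\mathcal{C}^3$ on the image), the two tensor sums $\sum_{m,n} A_{mn}\,\partial^3_{jmn}\phi^*$ and $\sum_{\ell,m} A_{m\ell}\,\partial^3_{j\ell m}\phi^*$ are identical after a trivial relabelling of dummy indices. This matches the two sides of \eqref{eq:tensorelpf} up to the minus sign that is already present, completing the proof.

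The only real obstacle is bookkeeping: one must be careful to distinguish between the operator $\frac{\partial}{\partial x^\phi_j}$ (differentiation in the mirror coordinate, which introduces a factor of $A^{-1}$ whenever one converts back to $x$) and the "raw" index $j$ appearing in $\partial^3\phi^*$. Both computations above avoid invoking the matrix-inverse derivative identity $\partial (A^{-1}) = -A^{-1}(\partial A)A^{-1}$; routing through the conjugate $\phi^*$ is what keeps the calculation compact and the symmetry argument transparent.
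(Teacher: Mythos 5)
Your proof is correct: the identity $\log\det\nabla^2\phi(x)=-\log\det\nabla^2\phi^*(x^\phi)$ is valid under the stated hypotheses, Jacobi's formula applied to $\nabla^2\phi^*$ in the mirror coordinate gives your first display, the chain-rule computation of $\sum_\ell \frac{\partial^2 x_j}{\partial x_\ell\,\partial x^\phi_\ell}$ is right, and the symmetry of $A=\nabla^2\phi$ together with Schwarz symmetry of $\nabla^3\phi^*$ (legitimate since $\nabla\phi^*\in\mathcal{C}^2$) closes the gap between the two contractions. Your route is genuinely different in organization from the paper's: the paper differentiates the matrix identity $\bigl(\frac{\partial x^\phi}{\partial x}\bigr)\bigl(\frac{\partial x}{\partial x^\phi}\bigr)=I_d$ entrywise with respect to $x^\phi_k$, sets $k=i$ and sums to reach the intermediate relation \eqref{eq:tenso1}, and only then invokes $\nabla_A\ldet(A)=A^{-1}$ for $A=\nabla^2\phi$ plus a chain rule in $x$, so the third-derivative tensor that appears is $\nabla^3\phi$; you instead pass to the conjugate so that the log-determinant becomes a function of $x^\phi$ outright, which makes the mirror-coordinate derivative trivial to take and expresses both sides as contractions of $\nabla^2\phi$ with $\nabla^3\phi^*$. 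What your version buys is that you never differentiate the product $AB=I$ (equivalently, never need $\partial(A^{-1})=-A^{-1}(\partial A)A^{-1}$) and the index bookkeeping collapses to a single relabelling; what the paper's version buys is that it stays entirely with derivatives of $\phi$ on the left-hand side and produces the intermediate identity \eqref{eq:tenso1}, which is reused implicitly in the style of computation elsewhere (e.g.\ in the generator calculations). Either argument is acceptable; just make sure, as you did, to flag that differentiating $(\nabla^2\phi^*)_{j\ell}(x^\phi(x))$ in $x_\ell$ requires $\phi^*\in\mathcal{C}^3$, which is exactly what the $\mathcal{C}^2$-diffeomorphism hypothesis provides.
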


The proof of \cref{lem:tensorel} is deferred to~\cref{sec:pfres}. 

\begin{proof}[Proof of \cref{thm:existlin}]
By invoking \cref{lem:jacobian} with $\phi=w_t$, $a=g$, we observe that $\rho_t$ is strictly positive and 
 $h_t:=-\log \rho_t$
 is well-defined. 
 Observe that 
 \begin{align}\label{eq:com1}
 h_t(x)=g(x^{u_t})-\ldet\left(\frac{\partial x^{u_t}}{\partial x\hfill}\right)=f-\frac{\partial u_t}{\partial t\hfill}
 \end{align}
 where the first equality is by \eqref{eq:jacobian} and the second equality is by \eqref{eq:pma}. 
 
Now starting with the first equality above along with the chain rule, we get:
\begin{align*}
\frac{\partial h_t}{\partial t\hfill}(x)\nonumber &=\frac{\partial}{\partial t}\left(g(x^{u_t})-\ldet\left(\frac{\partial x^{u_t}}{\partial x\hfill}\right)\right)\nonumber \\ &=\bigg\langle \frac{\partial x^{u_t}}{\partial t\hfill},\frac{\partial g\hfill}{\partial x^{u_t}} (x^{u_t})\bigg\rangle - \frac{\partial}{\partial t}\left(\ldet\left(\frac{\partial x^{u_t}}{\partial x\hfill}\right)\right)
\end{align*}
For the second term, we will use \cite[Section A.4.1]{boyd2004convex} to note the fact that the derivative of $\ldet(A)$ with respect to the entries of $A$ is given by $A^{-1}$. By taking $A=\frac{\partial x^{u_t}}{\partial x\hfill}$ (therefore $A^{-1}=\frac{\partial x\hfill}{\partial x^{u_t}}$) and applying the chain rule again, we get
\begin{align}\label{eq:rhot1}
    -\frac{1}{\rho_t(x)}\frac{\partial \rho_t}{\partial t}(x)=\frac{\partial h_t}{\partial t\hfill}(x)=\bigg\langle \frac{\partial x^{u_t}}{\partial t\hfill},\frac{\partial g\hfill}{\partial x^{u_t}} (x^{u_t})\bigg\rangle-\bigg\langle \frac{\partial x\hfill}{\partial x^{u_t}}, \frac{\partial}{\partial t}\left(\frac{\partial x^{u_t}}{\partial x\hfill}\right)\bigg\rangle.\end{align}

Take 
\begin{align}\label{eq:vtalt}
v_t=\frac{\partial\hfill}{\partial x^{u_t}}((h_t-f)(x))=-\frac{\partial\hfill}{\partial x^{u_t}}\left(\frac{\partial u_t}{\partial t\hfill}(x)\right).
\end{align}
The first equality is the definition of $v_t$ as in  \eqref{eq:velocity}, and the second equality is the PMA \eqref{eq:pma}. Then by the product rule we have:
\begin{align}\label{eq:rhot2}
&\frac{\div(v_t\rho_t)}{\rho_t}(x)\nonumber =\langle v_t(x),\nabla \log{\rho_t(x)}\rangle + \div v_t(x)\nonumber \\ 
&=\bigg\langle -\frac{\partial}{\partial x^{u_t}}\left(\frac{\partial u_t}{\partial t\hfill}(x)\right),-\frac{\partial h_t}{\partial x\hfill}(x)\bigg\rangle - \div{\left(\frac{\partial}{\partial x^{u_t}}\left(\frac{\partial u_t}{\partial t\hfill}(x)\right)\right)},
\end{align}
where the last line uses \eqref{eq:vtalt}. We will now simplify each of the terms above. For the first term, observe that:

\begin{align*}
    \bigg\langle -\frac{\partial}{\partial x^{u_t}}\left(\frac{\partial u_t}{\partial t\hfill}(x)\right),-\frac{\partial h_t}{\partial x\hfill}(x)\bigg\rangle &=\bigg\langle \frac{\partial}{\partial x}\frac{\partial u_t}{\partial t\hfill}(x)\left(\frac{\partial x}{\partial x^{u_t}\hfill}\right),\frac{\partial h_t}{\partial x}(x)\bigg\rangle\\ &=\bigg\langle \frac{\partial x^{u_t}}{\partial t\hfill}, \left(\frac{\partial x}{\partial x^{u_t}\hfill}\right)\frac{\partial h_t}{\partial x}(x)\bigg\rangle =\bigg\langle \frac{\partial x^{u_t}}{\partial t\hfill},\frac{\partial h_t\hfill}{\partial x^{u_t}}(x)\bigg\rangle.
\end{align*}
The first and third equalities above use the chain rule (see \eqref{eq:notcal} for clarity of notation). For the second equality, we have interchanged the time and the space derivatives in the term $\frac{\partial}{\partial x}\frac{\partial u_t}{\partial t\hfill}(x)$ using \cref{asn:solcon}, part (ii). We now move on to the second term of \eqref{eq:rhot2}. Note that
\begin{align*}
    \div{\left(\frac{\partial}{\partial x^{u_t}}\left(\frac{\partial u_t}{\partial t\hfill}(x)\right)\right)}&=\div{\left(\frac{\partial x^{u_t}}{\partial t}\left(\frac{\partial x}{\partial x^{u_t}}\right)\right)}\\ &=\sum_{i,j=1}^d \frac{\partial}{\partial x_i}\left(\left(\frac{\partial x^{u_t}}{\partial t\hfill}\right)_j\left(\frac{\partial x\hfill}{\partial x^{u_t}}\right)_{j,i}\right).
\end{align*}
In the first equality, we have used the chain rule as before and the second display uses the definition of divergence. We will now  use the product rule to obtain 
\begin{align*}
    &\;\;\;\;\sum_{i,j=1}^d \frac{\partial}{\partial x_i}\left(\left(\frac{\partial x^{u_t}}{\partial t\hfill}\right)_j\left(\frac{\partial x\hfill}{\partial x^{u_t}}\right)_{j,i}\right)\\ &=\bigg\langle \frac{\partial}{\partial t}\left(\frac{\partial x^{u_t}}{\partial x\hfill}\right),\frac{\partial x\hfill}{\partial x^{u_t}}\bigg\rangle+\sum_{j=1}^d \left(\frac{\partial x^{u_t}}{\partial t\hfill}\right)_j\left(\sum_{i=1}^d \frac{\partial^2 x_i}{\partial x_i\partial x_i^{u_t}}\right)\\ &=\bigg\langle \frac{\partial x\hfill}{\partial x^{u_t}},\frac{\partial}{\partial t}\left(\frac{\partial x^{u_t}}{\partial x\hfill}\right)\bigg\rangle-\bigg\langle \left(\frac{\partial x^{u_t}}{\partial t\hfill}\right),\frac{\partial}{\partial x^{u_t}}\left(\ldet\left(\frac{\partial x^{u_t}}{\partial x\hfill}\right)\right)\bigg\rangle.
\end{align*}
In the second inequality above, we have used \eqref{eq:tensorelpf} on the second term. We now combine our observations on the two terms of \eqref{eq:rhot2} to get: 
\begin{small}
\begin{align*}
    \frac{\div(v_t\rho_t)}{\rho_t}(x)&=\bigg\langle \left(\frac{\partial x^{u_t}}{\partial t\hfill}\right),\frac{\partial}{\partial x^{u_t}}\left(h_t(x)+\ldet\left(\frac{\partial x^{u_t}}{\partial x\hfill}\right)\right)\bigg\rangle-\bigg\langle \frac{\partial x\hfill}{\partial x^{u_t}},\frac{\partial}{\partial t}\left(\frac{\partial x^{u_t}}{\partial x\hfill}\right)\bigg\rangle.
\end{align*}
\end{small}
Observe that, by applying $\frac{\partial}{\partial x_j^{u_t}}$ on both sides of \eqref{eq:com1} we get 
\begin{align*}
    \frac{\partial}{\partial x^{u_t}}\left(h_t(x)+\ldet\left(\frac{\partial x^{u_t}}{\partial x\hfill}\right)\right)=\frac{\partial g\hfill}{\partial x^{u_t}}(x^{u_t}).
\end{align*}
Therefore, 
\begin{align}\label{eq:rhot3}
\frac{\div(v_t\rho_t)}{\rho_t}(x)=\bigg\langle \left(\frac{\partial x^{u_t}}{\partial t\hfill}\right),\frac{\partial g\hfill}{\partial x^{u_t}}(x^{u_t})\bigg\rangle-\bigg\langle \frac{\partial x\hfill}{\partial x^{u_t}},\frac{\partial}{\partial t}\left(\frac{\partial x^{u_t}}{\partial x\hfill}\right)\bigg\rangle.
\end{align}

By adding \eqref{eq:rhot1} and \eqref{eq:rhot3}, it follows that $\frac{\partial \rho_t}{\partial t\hfill}+\div{(\rho_t v_t)}=0$.

\noindent Next, we establish absolute continuity of $(\rho_t)_{t\in [0,T]}$. By invoking \cite[Theorem 8.3.1]{ambrosio2005gradient}, it suffices to show that $\sup_{x,t\in [0,T]} \lVert v_t(x)\rVert<\infty$. By the representation of $v_t$ in \eqref{eq:vtalt} and the PMA \eqref{eq:pma}, we observe that for $t\in [0,T]$,
\begin{align*}
\lVert v_t(x)\rVert&=\left\lVert \left(\frac{\partial x}{\partial x^{u_t}}\right)\frac{\partial\hfill}{\partial x}\left(f(x)-g(x^{u_t})+\ldet\left(\frac{\partial x^{u_t}}{\partial x\hfill}\right)\right)\right\rVert\\ &\le B_T \left\lVert \frac{\partial\hfill}{\partial x}\left(f(x)-g(x^{u_t})+\ldet\left(\frac{\partial x^{u_t}}{\partial x\hfill}\right)\right) \right\rVert.
\end{align*}
Here in the first equality, we have again used the chain rule and in the following inequality, we have used the upper bound in \cref{asn:solcon}, part (i). Next, note that by \cref{asn:solcon}, parts (i) and (iii), we have:
$$\sup_x \ \left\lVert \frac{\partial f}{\partial x}(x)\right\rVert<\infty, \quad \sup_x \left\lVert \frac{\partial\hfill}{\partial x}(g(x^{u_t}))\right\rVert\le \sup_x \left\lVert \frac{\partial g\hfill}{\partial x^{u_t}}(x^{u_t})\right\rVert \ \sup_x \left\lVert \frac{\partial x^{u_t}}{\partial x\hfill}\right\rVert<\infty.$$
Therefore, to prove $\sup_{x,t\in [0,T]} \lVert v_t(x)\rVert$ is bounded, it suffices to control the norm of $\frac{\partial}{\partial x}\left(\ldet\left(\frac{\partial x^{u_t}}{\partial x\hfill}\right)\right)$. To wit, we will once again use \cite[Section A.4.1]{boyd2004convex} (as we did earlier while obtaining \eqref{eq:rhot1} above). For $i\in [d]$, we then get
\begin{align*}
    \frac{\partial}{\partial x}\left(\ldet\left(\frac{\partial x^{u_t}}{\partial x\hfill}\right)\right)=\bigg\langle \frac{\partial x\hfill}{\partial x^{u_t}},\frac{\partial\hfill}{\partial x_i}\left(\frac{\partial x^{u_t}}{\partial x\hfill}\right)\bigg\rangle.
\end{align*}
Now observe that the entries of $\frac{\partial x\hfill}{\partial x^{u_t}}$ are uniformly bounded (in $x$ and $t\in [0,T]$) in terms of $A_T$ by using \cref{asn:solcon}, part (i). Also the entries of $\frac{\partial}{\partial x_i}\left(\frac{\partial x^{u_t}}{\partial x\hfill}\right)$ are uniformly bounded by \cref{asn:solcon}, part (iii), where we have assumed boundedness of the third derivative tensor of $u_t$'s. Therefore, 
$$\sup_{x,t\in [0,T]} \left\lVert \frac{\partial}{\partial x}\left(\ldet\left(\frac{\partial x^{u_t}}{\partial x\hfill}\right)\right)\right\rVert<\infty.$$
This readily implies $\sup_{x,t\in [0,T]} \lVert v_t(x)\rVert<\infty$ and completes the proof.
\end{proof}

The velocity field $(v_t)$ appearing in the Sinkhorn flow \eqref{eq:velocity} is not a gradient in $x$. Thus, it may not lie in the $2$-Wasserstein tangent space at $\rho_t$, and the metric derivative (as defined in \cite[Theorem 1.1.2]{ambrosio2005gradient}) of the curve at time $t$ may not be $\norm{v_t}_{L^2(\rho_t)}$, which is the case for usual gradient flows in an appropriate sense (see \cite[Theorem 8.3.1]{ambrosio2005gradient}). 

However, as we show below, if we replace the $2$-Wasserstein distance (see \eqref{eq:2wass}) with the $\dlt{}{e^{-g}}{\cdot}{\cdot}$ metric (see \eqref{eq:2linot}), the metric derivative of the curve is indeed given by $\norm{v_t}_{L^2(\rho_t)}$. 

\begin{thm}\label{prop:metderlin}
Suppose \cref{asn:solcon} holds. Then with $\rho_t$ as in \cref{thm:existlin}, for any $t\ge 0$, we have: 
\begin{align}\label{eq:linot}
\lim_{\delta\to 0} \frac{1}{\delta}\dlt{}{e^{-g}}{\rho_{t+\delta}}{\rho_t}=\lVert v_t\rVert_{L^2(\rho_t)},
\end{align}
and 
\begin{align}\label{eq:linotsec}
    \dlt{}{e^{-g}}{\rho_{t+\delta}}{(\nabla w_t+\delta v_t(\nabla w_t))_{\#} e^{-g}}=o(\delta).
\end{align}
\end{thm}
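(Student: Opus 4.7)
\emph{Setup and key identity.} Since $\nabla w_t$ and $\nabla w_{t+\delta}$ are the Brenier maps transporting $e^{-g}$ to $\rho_t$ and $\rho_{t+\delta}$ respectively, the definition of the LOT distance gives
\begin{equation*}
\dlt{2}{e^{-g}}{\rho_{t+\delta}}{\rho_t} = \int \lVert \nabla w_{t+\delta}(y) - \nabla w_t(y) \rVert^2 e^{-g(y)}\,dy.
\end{equation*}
Both claims will be reduced to a first-order expansion of $\nabla w_t$ in $t$ driven by the identity
\begin{equation}\label{eq:keyid-plan}
[\partial_t \nabla w_t](y) = v_t(\nabla w_t(y)), \quad y \in \R^d.
\end{equation}
To establish \eqref{eq:keyid-plan}, I would apply the inverse-derivative relation \eqref{eq:inverse_derivative} to $T_t = \nabla u_t$ (so that $T_t^{-1} = \nabla w_t$), then substitute the PMA in the form $\partial_t \nabla u_t = \nabla(f - h_t)$ coming from \eqref{eq:pma2}, together with the velocity formula $v_t = -(\nabla^2 u_t)^{-1}\nabla(f - h_t)$ that follows from \eqref{eq:velocity} and $h_t = -\log \rho_t$; the two factors of $(\nabla^2 u_t)^{-1}$ and $\nabla^2 u_t$ then cancel, yielding \eqref{eq:keyid-plan}.

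\emph{Proof of \eqref{eq:linot}.} Dividing the display above by $\delta^2$, the integrand $\lVert(\nabla w_{t+\delta}(y) - \nabla w_t(y))/\delta\rVert^2$ converges pointwise to $\lVert v_t(\nabla w_t(y))\rVert^2$ by \eqref{eq:keyid-plan} and the $\diffcont^{1,2}$ regularity of $w_t$ from Assumption \ref{asn:solcon} together with Remark \ref{rem:dualasn}. The same assumption provides a uniform dominating bound on $[t, t+\delta_0] \times \R^d$, so dominated convergence applies. The change of variables $y = \nabla u_t(x)$, which carries $e^{-g(y)}\,dy$ to $\rho_t(x)\,dx$, then identifies the limit as $\lVert v_t\rVert^2_{L^2(\rho_t)}$, and taking square roots delivers \eqref{eq:linot}.

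\emph{Proof of \eqref{eq:linotsec}.} The key structural observation is that, by commuting mixed partial derivatives of $w_t$ (again using $\diffcont^{1,2}$ regularity),
\begin{equation*}
\nabla w_t(y) + \delta\, v_t(\nabla w_t(y)) = \nabla_y\bigl[w_t + \delta\, \partial_t w_t\bigr](y).
\end{equation*}
Since $\nabla^2 w_t(y) \ge B_T^{-1} I$ uniformly on $[0,T] \times \R^d$ (Assumption \ref{asn:solcon}(i) applied via Remark \ref{rem:dualasn}) while $\nabla^2 \partial_t w_t$ is uniformly bounded, the potential $w_t + \delta\, \partial_t w_t$ is strictly convex for every sufficiently small $\delta > 0$. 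Brenier's theorem then identifies $\nabla(w_t + \delta\, \partial_t w_t)$ as the unique optimal transport map from $e^{-g}$ to $\mu_2 := (\nabla w_t + \delta v_t(\nabla w_t))_{\#} e^{-g}$, so
\begin{equation*}
\dlt{2}{e^{-g}}{\rho_{t+\delta}}{\mu_2} = \int \bigl\lVert \nabla w_{t+\delta}(y) - \nabla w_t(y) - \delta\, v_t(\nabla w_t(y))\bigr\rVert^2 e^{-g(y)}\,dy.
\end{equation*}
Rewriting the integrand as the squared norm of $\int_0^\delta [\partial_s \nabla w_{t+s} - \partial_t \nabla w_t](y)\,ds$, then applying Cauchy--Schwarz and dominated convergence (using continuity of $s \mapsto \partial_s \nabla w_s$ at $s = t$), bounds the integral by $o(\delta^2)$, whence $\dlt{}{e^{-g}}{\rho_{t+\delta}}{\mu_2} = o(\delta)$.

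\emph{Main obstacle.} The most delicate point is the identification of the perturbed map as a genuine optimal transport map in the proof of \eqref{eq:linotsec}. This rests on first recognizing the perturbation as a pure gradient via the equality of mixed partials of $w_t$, and then establishing strict convexity of $w_t + \delta\, \partial_t w_t$ for small $\delta$, both of which rely critically on the uniform regularity and Hessian bounds in Assumption \ref{asn:solcon} together with their dual counterparts in Remark \ref{rem:dualasn}. Once this structural fact is in place, the remainder of the argument is a routine combination of Taylor expansion, Cauchy--Schwarz, and dominated convergence.
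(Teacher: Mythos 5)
Your proposal is correct and follows essentially the same route as the paper: you express the LOT distance as the $L^2(e^{-g})$ distance between Brenier maps, show the difference quotient $\delta^{-1}(\nabla w_{t+\delta}-\nabla w_t)$ converges (uniformly under \cref{asn:solcon}) to $v_t\circ\nabla w_t$, change variables to get \eqref{eq:linot}, and for \eqref{eq:linotsec} identify the perturbed map as the gradient of a convex potential so that McCann's theorem makes it optimal, exactly as in the paper's Lemmas~\ref{lem:dualPMA} and~\ref{lem:convexcall}. The only (harmless) cosmetic differences are that you derive the key identity $\partial_t\nabla w_t = v_t\circ\nabla w_t$ from the inverse-derivative relation \eqref{eq:inverse_derivative} rather than through the dual PMA \eqref{eq:dualPMA}, and you justify convexity of $w_t+\delta\,\partial_t w_t$ by the bounded mixed partials in \cref{asn:solcon}(ii) instead of the explicit third/fourth-derivative computation in the paper's proof of \cref{lem:convexcall}; both are equivalent under the stated assumptions.
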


The conclusion in \eqref{eq:linotsec} shows that if we appropriately perturb $\nabla w_t$ in the direction of the velocity $v_t$, then the corresponding push-forward measure $(\nabla w_t+\delta v_t(\nabla w_t))_{\#} e^{-g}$ yields a first order approximation to $\rho_{t+\delta}$. Such results are very popular in the literature on continuity equations and usual gradient flows.  Similar results for Euclidean gradient flows imply that, following the tangent vector of a smooth curve $\{x_t\}$ at a point $t$ over a small timestep $\delta$ is ``close" up to the first order (in the Euclidean metric) to $x_{t+\delta}$. Alternatively, it means that the trajectory of a particle moving along a smooth curve can be approximated by piecewise constant velocity curves. This is also true for usual Wasserstein gradient flows (see \cite[Proposition 8.4.6]{ambrosio2005gradient}). There the authors show that if $(\rho_t)_{t\ge 0}$ satisfies a continuity equation with velocity $v_t$ which is a gradient in $x$ (not the case in our setting), then $\wass_2(\rho_{t+\delta},(\mathrm{Id}+\delta v_t)_{\#}\rho_t)=o(\delta)$. \cref{prop:metderlin} establishes a similar result for Wasserstein mirror flows with the Hilbertian linear optimal transport distance (see \eqref{eq:2linot}). Note that the Wasserstein distance is \emph{smaller than} the linear optimal transport distance. In this sense, the approximation of $\rho_{t+\delta}$ in \eqref{eq:linotsec} is stronger than that in \cite[Proposition 8.4.6]{ambrosio2005gradient} (albeit under stronger regularity conditions).

In order to prove \cref{prop:metderlin} we need some additional results. First is an important conjugacy relationship for the PMA. 
Remark \ref{rmk:hori-ver-corr} suggests that there is a dual flow because one can flip the choice of the horizontal and vertical directions. Indeed, recall that the potential $u_t$ in \eqref{eq:KPtoBP} gives the flow $\rho_t= (\nabla u_t)^{-1}_\# e^{-g}$. We can consider the family of convex conjugates $w_t=u_t^*$, whereby $\nabla w_t  = (\nabla u_t)^{-1}$. Viewed as vector fields the time derivative $\partial_t \nabla u_t$
    is equivalent to the vertical variation $\nabla_{\wass} F(\rho_t)$ while 
    $\partial_t \nabla w_t$  is equivalent to   the horizontal variation
   $- \nabla_{\wass}^U F(\rho_t).$
   By \eqref{eq:pma}, $(u_t)$ solves a PMA. We show in Lemma \ref{lem:dualPMA} that $(w_t)$ also solves a (different) PMA. This conjugacy is also evident from the Sinkhorn algorithm which comes with a pair of potentials at each step. The PMA \eqref{eq:pma} is the limit (in $\vep$) of one of these sequence of potentials. So, by symmetry, it is only natural that the other sequence of potentials has a PMA limit as well and the corresponding potentials in the two limiting PMAs are related by convex duality.

    


\begin{lmm}\label{lem:dualPMA}
Suppose \cref{asn:solcon} holds. Then the process $\left( w_t=u_t^*,\; t\ge 0 \right)$  is also the solution of a PMA:
\begin{equation}\label{eq:dualPMA}
\frac{\partial w_t}{\partial t}(y)= g(y) - f(y^{w_t}) + \ldet \left( \frac{\partial y^{w_t}}{\partial y\hfill}\right),
\end{equation}
with the initial condition $w_0=u_0^*$. 
\end{lmm}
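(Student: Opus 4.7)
The plan is to derive the dual PMA by differentiating the Legendre duality identity in time and substituting the PMA for $u_t$. All the needed regularity (so that time and space derivatives can be taken and commuted) is furnished by Assumption \ref{asn:solcon} together with Remark \ref{rem:dualasn}.

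First, I would start from the Legendre duality in the envelope form. Since $w_t = u_t^*$ and $\nabla u_t$ is a $\mathcal{C}^2$ diffeomorphism, the supremum in the definition of $u_t^*$ is attained uniquely at $x = \nabla w_t(y) = y^{w_t}$, so
\begin{equation*}
w_t(y) = \langle y^{w_t}, y\rangle - u_t(y^{w_t}).
\end{equation*}
Differentiating in $t$, using the envelope theorem (the boundary term vanishes because $\nabla u_t(y^{w_t}) = y$), the terms involving $\partial_t y^{w_t}$ cancel, leaving the clean identity
\begin{equation*}
\frac{\partial w_t}{\partial t}(y) = -\frac{\partial u_t}{\partial t}(y^{w_t}).
\end{equation*}

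Next, I would plug the PMA \eqref{eq:pma} into the right-hand side, evaluated at $x=y^{w_t}$. Note that $x^{u_t}\bigl|_{x=y^{w_t}} = \nabla u_t(\nabla w_t(y)) = y$, so
\begin{equation*}
\frac{\partial u_t}{\partial t}(y^{w_t}) = f(y^{w_t}) - g(y) + \ldet\left(\frac{\partial x^{u_t}}{\partial x}\bigg|_{x=y^{w_t}}\right).
\end{equation*}
Now I would use the conjugacy relation \eqref{eq:conjrel}: at $x=y^{w_t}$,
\begin{equation*}
\frac{\partial x^{u_t}}{\partial x}\bigg|_{x=y^{w_t}} = \nabla^2 u_t(y^{w_t}) = \bigl(\nabla^2 w_t(y)\bigr)^{-1} = \left(\frac{\partial y^{w_t}}{\partial y}\right)^{-1},
\end{equation*}
whence the log-determinant flips sign. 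Combining with the previous display gives exactly \eqref{eq:dualPMA}. The initial condition $w_0 = u_0^*$ is just the definition.

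The only delicate point — and really the single place where the hypothesis does work — is justifying the envelope-type differentiation and the interchange of $\partial_t$ with $\nabla$ implicit in the identity $\partial_t \nabla u_t = \nabla \partial_t u_t$ on which the chain-rule manipulation rests. For this, Assumption \ref{asn:solcon}(ii) provides the needed $\mathcal{C}^{1,2}$ regularity in $(t,x)$, while parts (i) and (iii) (transferred to $w_t$ via Remark \ref{rem:dualasn}) guarantee that $\nabla w_t$ is a $\mathcal{C}^2$ diffeomorphism with bounded, invertible Hessian, so that both sides of \eqref{eq:dualPMA} are well-defined and continuous. No additional genuinely new calculation is needed — the lemma is essentially a bookkeeping consequence of Legendre duality and the identity $\det \nabla^2 u_t \cdot \det \nabla^2 w_t \equiv 1$.
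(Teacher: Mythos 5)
Your proposal is correct and follows essentially the same route as the paper: differentiate the Legendre duality identity in $t$, observe that the terms involving $\partial_t y^{w_t}$ cancel because $\nabla u_t(y^{w_t})=y$ (the paper does this explicitly via the chain rule rather than invoking the envelope theorem), substitute the PMA at $x=y^{w_t}$, and flip the sign of the log-determinant using \eqref{eq:conjrel}. No substantive difference.
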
        

The next result is a property of the velocity field $v_t$ in \eqref{eq:velocity}.

\begin{lmm}\label{lem:convexcall}
Suppose \cref{asn:solcon} holds. Then, for $\delta>0$ small enough, the function $y\mapsto y^{w_t}+\delta v_t(y^{w_t})$ is the gradient in $y$ of the convex function $y\mapsto w_t(y)-\delta (f(y^{w_t})-h_t(y^{w_t}))$.
\end{lmm}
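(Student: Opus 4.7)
Plan. The lemma makes two distinct assertions: a gradient identity and convexity of the stated function. The gradient identity is a direct chain-rule computation, while the convexity follows from the uniform Hessian bounds in \cref{asn:solcon}. Neither step is difficult in isolation; the main bookkeeping challenge is managing the coordinate change $x \leftrightarrow x^{u_t}$ (equivalently $y \leftrightarrow y^{w_t}$) together with the duality $\nabla w_t = (\nabla u_t)^{-1}$.

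First I would verify the gradient identity. Differentiating the first term gives $\nabla_y w_t(y) = y^{w_t}$. For the second term, chain rule yields
\[
\nabla_y\bigl[(f-h_t)(\nabla w_t(y))\bigr] \;=\; \nabla^2 w_t(y)\,\nabla(f-h_t)(y^{w_t}).
\]
By \eqref{eq:conjrel}, $\nabla^2 w_t(y) = \bigl(\nabla^2 u_t(y^{w_t})\bigr)^{-1}$. On the other hand, since $h_t=-\log\rho_t$ we have $\nabla_{\wass}\KL{\rho_t}{e^{-f}}(x) = \nabla\log(\rho_t/e^{-f})(x) = \nabla(f-h_t)(x)$, so \eqref{eq:velocity} gives $v_t(x) = -\bigl(\nabla^2 u_t(x)\bigr)^{-1}\nabla(f-h_t)(x)$. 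Setting $x = y^{w_t}$ and comparing identifies the second term above with $-v_t(y^{w_t})$, yielding the total gradient $y^{w_t} + \delta v_t(y^{w_t})$, as required.

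Next I would establish convexity for small $\delta$ by showing that the Hessian of the function in question is positive definite. Using \eqref{eq:conjrel} once more, $\nabla^2 w_t = (\nabla^2 u_t)^{-1}$, so the upper bound in \cref{asn:solcon}(i) gives $\nabla^2 w_t(y) \succeq B_T^{-1} I$ uniformly in $y$ and $t\in[0,T]$. It then remains to bound the Hessian of the correction $\psi_t(y) := (f-h_t)(y^{w_t})$ from above in operator norm, uniformly in $(y,t)$ on $\R^d\times[0,T]$. Using \eqref{eq:com1} to write $h_t(x) = g(\nabla u_t(x)) - \log\det\nabla^2 u_t(x)$, the boundedness of the first two derivatives of $f$ and $g$ together with the boundedness of the first four derivatives of $u_t$ and $w_t$ granted by \cref{asn:solcon}(iii) imply that all components appearing in the three-level chain-rule expansion of $\nabla^2\psi_t(y)$ are bounded; hence $\nabla^2\psi_t$ has a uniform operator-norm bound, say $M_T$. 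Choosing $\delta < (B_T M_T)^{-1}$ then gives $\nabla^2\bigl[w_t - \delta\psi_t\bigr](y) \succ 0$ on all of $\R^d$, which is convexity. The main (minor) obstacle is the third-order derivative bookkeeping for $\nabla^2\psi_t$, but this is routine given \cref{asn:solcon}(iii).
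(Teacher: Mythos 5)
Your proposal is correct and follows essentially the same route as the paper: the gradient identity is the computation behind \eqref{eq:labelgrad} (the velocity formula \eqref{eq:velocity} composed with $\nabla w_t$ via \eqref{eq:conjrel}), and the convexity step matches the paper's argument, which combines the uniform lower bound $\nabla^2 w_t\succeq B_T^{-1}I$ from \cref{asn:solcon}(i) with a uniform operator-norm bound on the Hessian of the $\delta$-correction (obtained from \cref{asn:solcon}(iii) by the same third/fourth-derivative bookkeeping, the paper phrasing the final perturbation via Weyl's inequality). No gaps.
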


\begin{proof}[Proof of \cref{prop:metderlin}]

We first prove \eqref{eq:linot}. As $w_t=u_t^*$ satisfies $(\nabla w_t)_{\#} e^{-g}=\rho_t=e^{-h_t}$, we have:
    \begin{align}\label{eq:simlin}
    \frac{1}{\delta^2}\dlt{2}{e^{-g}}{\rho_{t+\delta}}{\rho_t}&=\int \lVert \delta^{-1}(\nabla w_{t+\delta}(y)-\nabla w_t(y))\rVert^2 e^{-g(y)}\,dy.
    \end{align}
    By \cref{asn:solcon} (also see \cref{rem:dualasn}), we note that
    $$\sup_y \bigg|\delta^{-1}(\nabla w_{t+\delta}(y)-\nabla w_t(y))-\nabla \frac{\partial w_t}{\partial t\hfill}(y)\bigg|\le \sup_{y,s\in [t,t+\delta]}\bigg|\nabla \frac{\partial w_s}{\partial s\hfill}(y)-\nabla \frac{\partial w_t}{\partial t\hfill}(y)\bigg|=o(1).$$
    By combining the above display with \eqref{eq:dualPMA}, we get:
    \begin{align*}
        \sup_y \bigg|\delta^{-1}(\nabla w_{t+\delta}(y)-\nabla w_t(y))- \nabla\left(g(y)-f(y^{w_t})+\ldet\left(\frac{\partial y^{w_t}}{\partial y\hfill}\right)\right)\bigg|=o(1).
    \end{align*}
    Here $o(1)$ is with respect to $\delta\to 0$. Next we use the first equality in \eqref{eq:vtalt} to observe that 
    \begin{align}\label{eq:labelgrad}
        v_t(y^{w_t})=\frac{\partial}{\partial y}((h_t-f)(y^{w_t}))=\frac{\partial}{\partial y}\left(g(y)-f(y^{w_t})+\ldet\left(\frac{\partial y^{w_t}}{\partial y\hfill}\right)\right).
    \end{align}
    In the second equality, we have used \eqref{eq:com1}. By combining the two displays above, we get:
    \begin{align}\label{eq:vtcall2}
        \sup_y \bigg|\delta^{-1}(\nabla w_{t+\delta}(y)-\nabla w_t(y))- v_t(y^{w_t})\bigg|=o(1).
    \end{align}
    By combining the above display with \eqref{eq:simlin}, we get
    \begin{align*}
    \frac{1}{\delta^2}\dlt{2}{e^{-g}}{\rho_{t+\delta}}{\rho_t}&=\int \lVert v_t(y^{w_t})\rVert^2 e^{-g(y)}\,dy+o(1).
    \end{align*}
    A final change of variable with $x=y^{w_t}$ establishes \eqref{eq:linot}. 

\vspace{0.05in}

We next prove \eqref{eq:linotsec}. By \cref{lem:convexcall}, there exists $\delta>0$ small enough such that the function $y\mapsto y^{w_t}+\delta v_t(y^{w_t})$ is the gradient of a convex function. With such $\delta>0$, by McCann's Theorem (see \cite{McCann1995}), $y^{w_t}+\delta v_t(y^{w_t})$ is the optimal transport map from $e^{-g}$ to $(y^{w_t}+\delta v_t(y^{w_t}))_{\#} e^{-g}$. As $y^{w_{t+\delta}}$ is the optimal transport map from $e^{-g}$ to $\rho_{t+\delta}$. With these observations, we get:
\begin{align*}
    &\;\;\;\;\dlt{}{e^{-g}}{\rho_{t+\delta}}{(\nabla w_t+\delta v_t(\nabla w_t))_{\#} e^{-g}}\\ &=\left(\int \lVert \nabla w_{t+\delta}(y)-\nabla w_t(y)-\delta v_t(y^{w_t})\rVert^2 e^{-g(y)}\,dy\right)^{\frac{1}{2}}=o(\delta), 
\end{align*}
where the last equality follows from \eqref{eq:vtcall2}. This proves \eqref{eq:linotsec}.
\end{proof}

\begin{remark}[Mirror descent for fixed $\vep>0$]\label{rem:Flavian}
In \cite[Theorem 2]{leger2021gradient}, the author showed that for fixed $\vep>0$, the marginals of the Sinkhorn algorithm \eqref{eq:sinkupdt} can be viewed as iterations of an explicit mirror descent algorithm. In particular, one can (informally) write 
\begin{align*}
\rho_{k+1}^{\vep}&=\argmin_{\mu\in\mathcal{P}_2(\R^d)}\bigg[\KL{\rho_{k}^{\vep}}{e^{-f}}+\bigg\langle \mu-\rho_{k}^{\vep}, \frac{\partial \hfill}{\partial \mu}\KL{\mu}{e^{-f}}\big|_{\mu=\rho_{k}^{\vep}}\bigg\rangle\\ &\quad\quad + F^*(\mu)-F^*(\rho_{k,\vep})-\bigg\langle \mu-\rho_{k}^{\vep},\frac{\partial \hfill}{\partial \mu}F^*(\mu)\bigg|_{\mu=\rho_{k}^{\vep}}\bigg\rangle,  
\end{align*}
where $F^*$ is the convex conjugate of the function $F$ defined by 
$$F(\phi)=\langle \opV[\phi],e^{-g}\rangle,$$
for ``smooth" functions $\phi:\R^d\to\R^d$ and $\opV$ defined in \eqref{eq:basedef} later. 

While \cite{leger2021gradient} focuses on the $\vep>0$ case, we take the limiting perspective ($\vep\to 0$) which allows us to explicitly identify the mirror function (see \eqref{eq:mirror}) and velocity vector field of the limiting (Sinkhorn) PDE \eqref{eq:velocity}. To our understanding, these explicit quantities cannot be derived from the proof techniques used in \cite{leger2021gradient}.
\end{remark}

\subsection{Exponential convergence of the 
Sinkhorn flow}\label{sec:sinkflow}

As \eqref{eq:velocity} 
arises out of the mirror flow of the entropy functional $\KL{\rho}{e^{-f}}$, it is natural to ask the following: does $\rho_t\to e^{-f}$ as $t\to\infty$? If so, then in what sense and what is the speed of convergence? 

To address this, we will now establish the exponential convergence of the Sinkhorn flow to $e^{-f}$. To begin, let us define the log-Sobolev inequality.

\begin{defn}\label{def:isid}
    We say that a probability measure $\xi\in \ptac$ satisfies a logarithmic Sobolev inequality (LSI) with constant $\lsi{\xi}>0$ if for all $\rho\in\ptac$, 
    \[
    \KL{\rho}{\xi}\le \frac{1}{2\lsi{\xi}}I(\rho|\xi),
    \]
    where $I(\rho|\xi)$ is the \emph{relative Fisher information} defined by
    \[
    I(\rho|\xi):=\int \left\lVert \nabla\log\frac{d\rho}{d\xi}\right\rVert^2\,d\rho.
    \]
\end{defn}

\begin{thm}\label{lem:expcon}
    Suppose there exists $\lsi{f}>0$ such that $e^{-f}$ satisfies LSI with constant $\lsi{f}$. Also assume that there exists a positive continuous function $h(\cdot)$ on $[0,\infty)$ such that
    \begin{equation}\label{eq:uniflb}
    \inf_{x\in\R^d}\lmn\left(\frac{\partial x\hfill}{\partial x^{u_t}}\right)\ge h(t).
    \end{equation}
    Then we have
    $$\KL{\rho_t}{e^{-f}}\le \KL{\rho_0}{e^{-f}}\exp(-2\lsi{f}H(t)),$$
    and 
    $$\wass_2(\rho_t,e^{-f})\le \sqrt{\frac{2\KL{\rho_0}{e^{-f}}}{\lsi{f}}}\exp(-\lsi{f} H(t)),$$
    where $H(t):=\int_0^t h(s)\,ds$.
\end{thm}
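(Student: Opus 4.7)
The plan is to differentiate $\KL{\rho_t}{e^{-f}}$ along the Sinkhorn flow and show it decays exponentially via the mirror-modified dissipation identity, and then invoke the Otto-Villani theorem to transfer this decay to $\wass_2$. The key computation exploits the explicit form of the velocity field in \eqref{eq:velocity}, which interacts cleanly with the KL derivative.

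\textbf{Step 1: Dissipation identity.} Since $(\rho_t)$ satisfies the continuity equation $\partial_t \rho_t + \div(\rho_t v_t)=0$ with
$v_t(x)= -(\nabla^2 u_t(x))^{-1} \nabla(f+\log \rho_t)(x)$, I would compute
\[
\frac{d}{dt}\KL{\rho_t}{e^{-f}} = \int (\log \rho_t + f)\,\partial_t \rho_t\,dx = \int \iprod{\nabla(\log\rho_t + f), v_t} \rho_t\,dx,
\]
after integration by parts (valid under \cref{asn:solcon} which guarantees enough smoothness and decay). Substituting the expression for $v_t$ gives the dissipation identity
\[
\frac{d}{dt}\KL{\rho_t}{e^{-f}} = -\int \iprod{\nabla(\log\rho_t + f),\,(\nabla^2 u_t)^{-1}\nabla(\log\rho_t+f)}\,\rho_t\,dx.
\]

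\textbf{Step 2: Using the Hessian lower bound.} By hypothesis \eqref{eq:uniflb}, $(\nabla^2 u_t(x))^{-1}=\frac{\partial x\hfill}{\partial x^{u_t}}\succeq h(t) I_d$ pointwise in $x$. Hence the integrand is bounded below by $h(t)\|\nabla(\log \rho_t+f)\|^2$, and the dissipation identity yields
\[
\frac{d}{dt}\KL{\rho_t}{e^{-f}} \le -h(t)\int \|\nabla(\log \rho_t + f)\|^2\,\rho_t\,dx = -h(t)\,I(\rho_t\,|\,e^{-f}).
\]

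\textbf{Step 3: Applying LSI and Gronwall.} By the LSI assumption, $I(\rho_t\,|\,e^{-f}) \ge 2\lsi{f}\KL{\rho_t}{e^{-f}}$. Thus
\[
\frac{d}{dt}\KL{\rho_t}{e^{-f}} \le -2\lsi{f}\,h(t)\,\KL{\rho_t}{e^{-f}},
\]
and Gronwall's inequality gives the first bound $\KL{\rho_t}{e^{-f}}\le \KL{\rho_0}{e^{-f}}\exp(-2\lsi{f}H(t))$.

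\textbf{Step 4: Transferring to $\wass_2$.} By the Otto-Villani theorem, a probability measure satisfying LSI with constant $\lsi{f}$ also satisfies Talagrand's transport inequality $T_2$ with the same constant, i.e., $\wass_2^2(\rho, e^{-f}) \le \tfrac{2}{\lsi{f}}\KL{\rho}{e^{-f}}$ for every $\rho$. Combining with Step 3 immediately yields the second bound. The main subtleties are the regularity justifications in Step 1 (the integration by parts and differentiation under the integral), but these are straightforward consequences of \cref{asn:solcon}, which guarantees that $(\nabla^2 u_t)^{-1}$, $\rho_t$, and $\log\rho_t$ are all smooth with controlled behavior; no further assumption beyond those already stated is needed.
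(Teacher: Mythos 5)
Your proposal is correct and follows essentially the same route as the paper: differentiate $\KL{\rho_t}{e^{-f}}$ along the flow using the explicit velocity \eqref{eq:velocity}, bound the resulting quadratic form below by $h(t)\,I(\rho_t|e^{-f})$ via \eqref{eq:uniflb}, apply LSI and Gronwall, and then transfer to $\wass_2$ through the LSI-implies-Talagrand inequality (the paper invokes this via the HWI/Otto--Villani result, which is the same content as your Step 4).
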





\begin{proof}[Proof of \cref{lem:expcon}]
    For convenience, we define $m:=\KL{\rho_0}{e^{-f}}$. By using the standard terminology of analysis in the $2$-Wasserstein space (see \cite[Chapters 9 and 10]{ambrosio2005gradient}), we note that the function $\rho\mapsto \KL{\rho}{e^{-f}}$ admits a subdifferential $\nabla \log\rho_t+\nabla f$.  
 This implies
    \begin{align*}
        \frac{d}{dt}\KL{\rho_t}{e^{-f}}&=\int \iprod{\nabla \log{\rho_t(x)}+\nabla f(x),v_t(x)}\,d\rho_t(x)\\ &=-\int \left(\nabla \log{\rho_t(x)}+\nabla f(x)\right)^{\top}\frac{\partial x\hfill}{\partial x^{u_t}\hfill}\left(\nabla \log{\rho_t(x)}+\nabla f(x)\right)\,d\rho_t(x).
    \end{align*}
    In the last display, we have used the representation of $v_t$ from \eqref{eq:velocity} coupled with the chain rule as illustrated in \eqref{eq:notcal}. Next, by using \eqref{eq:uniflb}, we get:
    \begin{align*}
        &\;\;\;\;-\int \left(\nabla \log{\rho_t(x)}+\nabla f(x)\right)^{\top}\frac{\partial x\hfill}{\partial x^{u_t}\hfill}\left(\nabla \log{\rho_t(x)}+\nabla f(x)\right)\,d\rho_t(x)\\ &\le -h(t)\int \lVert \nabla \log{\rho_t(x)}+\nabla f(x)\rVert^2\,d\rho_t(x)\\ &=-h(t) I(\rho_t|e^{-f})\le -2\lsi{f}h(t)\KL{\rho_t}{e^{-f}}
    \end{align*}
    In the final display here we have used the logarithmic Sobolev inequality (LSI) assumption, see \cref{def:isid}. 
    Next by invoking Gronwall's inequality (see \cite[Theorem II]{walter2012differential}), we get:
    $$\KL{\rho_t}{e^{-f}}\le \KL{\rho_0}{e^{-f}}\exp(-2\lsi{f}\int_0^t h(s)\,ds).$$
    Next we use the HWI inequality (see \cite[Theorem 1]{Otto2000}) to get:
    $$\wass_2^2(\rho_t,e^{-f})\le \frac{2}{\lsi{f}}\KL{\rho_t}{e^{-f}}\le \frac{2}{\lsi{f}}\KL{\rho_0}{e^{-f}}\exp(-2\lsi{f}\int_0^t h(s)\,ds).$$
    This completes the proof.
\end{proof}

The LSI assumption (see~\cref{def:isid}) is standard in the literature on rates of convergence of flows and plays a pivotal role in establishing information geometric inequalities; see \cite{Anton2001,Otto2000} and the references therein. In particular, the LSI condition can be verified in many popular examples. We cite two of them below.

\begin{enumerate}
    \item If $\inf_{x\in\R^d}\lmn(\nabla^2 f(x))\ge c$ for some constant $c>0$, then $e^{-f}$ satisfies LSI with constant $c>0$ (see \cite{Bakry1985}).
    \item Suppose $e^{-\tilde{f}}$ satisfies LSI with constant $\tilde{c}$. Let $\bar{f}:=f-\tilde{f}$ and assume $\bar{f}\in L^{\infty}(\R^d)$. Then $e^{-f}$ satisfies LSI with constant $c:=\tilde{c}\exp(\inf \bar{f}-\sup \bar{f})$ (see \cite{Holley1987}).
\end{enumerate}
We refer the reader to \cite{Cattiaux2010,Chen2021,Wang2001} for other conditions under which  the LSI condition can be established.


\subsection{Other examples of Wasserstein mirror gradient flows}\label{sec:othexamp}

It is natural to be curious about Wasserstein mirror gradient flows with other choices of function $F$. We give a few examples below. The reader should be careful that the PDEs and flows that we calculate have not been shown to exist or be well-behaved. 
\medskip

\begin{ex}[Potential energy]
Consider the function $F(\rho)=\frac{1}{2}\int \norm{x}^2 \rho(dx)$. It is strictly geodesically convex and has a unique minimizer at $\delta_0$. 

Given $e^{-g}$ and the mirror function \eqref{eq:mirror}, let us compute the time evolution of the mirror gradient flow. The evolution of the mirror potential is given by 
\[
\frac{\partial u_t}{\partial t}(x)= \frac{\delta F}{\delta \rho_t}(x)= \frac{1}{2}\norm{x}^2.  
\]
Thus $u_t= u_0 + \frac{t}{2} \norm{x}^2$, $t\ge 0$. A simple but instructive special case is when $u_0(x)=\frac{1}{2} \norm{x}^2$, i.e. $\rho_0=e^{-g}$, itself. Then $\nabla u_t= (1+t)\mathbf{id}$. Thus $\rho_t$ is the law of $Y/(1+t)$, where $Y \sim e^{-g}$. Clearly $\lim_{t\rightarrow \infty}\rho_t=\delta_0$ in $\wass_2$.
\end{ex}

\medskip

\begin{ex}[Entropy]
Let $F(\rho)= \int \rho(x) \log \rho(x) dx$ denote the entropy function. The function is defined to be $+\infty$ when the measure is not absolutely continuous. From \cite[Lemma 10.4.1]{ambrosio2005gradient}, $\frac{\delta F}{\delta \rho}=\log \rho + 1$. Thus, given $e^{-g}$, the time evolution of the mirror potential is given by the PDE
\begin{equation}\label{eq:entropymirror}
\frac{\partial u_t}{\partial t}(x)=  \log \rho_t(x) +1, \quad \text{i.e.},\quad \nabla u_t(x)= \nabla u_0(x) + \int_0^t \nabla \log \rho_s ds.
\end{equation}
The flow on the other hand is given by the continuity equation $\dot{\rho}_t + \nabla \cdot (v_t \rho_t)=0$ where
\[
v_t(x) = - \frac{\partial\hfill}{\partial x^{u_t}} \log \rho_t(x).  
\]
A closed form solution is available for the Gaussian family. Let $\rho_0, e^{-g}$ be both standard normal. Thus $\nabla u_0(x)= x$. Then $\rho_t=N(0, (1+t)^2I)$ is a solution of the mirror gradient flow. This can be easily verified from the fact that 
\[
\nabla \log \rho_t(x)= -\frac{x}{(1+t)^2}, \quad \nabla u_t(x)= \frac{x}{(1+t)},\quad \nabla \frac{\partial u_t}{\partial t}(x)= -\frac{x}{(1+t)^2}.
\]
This system is a solution to \eqref{eq:entropymirror}. 

It is, of course, well-known that the Wasserstein gradient flow of entropy is the heat flow which admits the solution $N(0, (1+t)I)$ at time $t$ when started with standard normal. Thus, in this case, the mirror gradient flow ``converges'' faster than the usual gradient flow, although in both cases the solution diffuses as $t\rightarrow \infty$.  
\end{ex}

\medskip

\begin{ex}[R\'{e}nyi entropy]
In \cite{Otto_2001}, Otto identified the solution of the porous medium equation as the Wasserstein gradient flow of the following functional that is related to the R\'{e}nyi entropy:
\[
F(\rho)= \frac{1}{m-1}\int \rho^m(x)dx,  
\]
where $m \ge 1 - \frac{1}{d}$, $m> \frac{d}{d+2}$ and $m\neq 1$. The function is defined to be $+\infty$ if $\rho$ is not absolutely continuous.

To find its mirror gradient flow, fix $e^{-g}$ to generate the mirror potential. From \cite[Lemma 10.4.1]{ambrosio2005gradient}, $\frac{\delta F}{\delta \rho}=\frac{m}{m-1} \rho^{m-1}$. Then, the evolution of the mirror potential is given by the PDE 
\[
\frac{\partial u_t}{\partial t}(x)= \frac{m}{m-1} \rho_t^{m-1}, \quad \text{where}\quad \rho_t= (\nabla u^*_t)_{\#} e^{-g},
\]
and the continuity equation of the mirror gradient flow is given by the velocity
\[
v_t(x)=-m\rho_t^{m-2}\nabla_{x^{u_t}} \rho_t(x).
\]
It is not immediate how these new flows compare with the traditional ones. 
\end{ex}

\section{The Sinkhorn Diffusion}\label{sec:diffmirr}
 It is a natural question whether there exists a diffusion such that the family of marginal distributions at each time point gives the Sinkhorn flow \eqref{eq:velocity}.  A classic example of this correspondence is  
 the Langevin diffusion (see \cite{lemons1997paul}) whose time marginals satisfy the Fokker-Planck equation. Such stochastic processes are useful in many applications including optimization (see \cite{chizat2022trajectory,durmus2019analysis,roberts1996exponential}) and sampling (see \cite{sohl2015deep,song2020score}). In this section we construct such a stochastic process inspired from a natural Markov chain embedded in the Sinkhorn algorithm, see \cref{prop:mchn} later in the paper. 


\begin{defn}[Sinkhorn diffusion] The Sinkhorn diffusion is   the solution to the following stochastic differential equation (SDE):
\begin{equation}\label{eq:diffSDE}
    dX_t=\left(-\frac{\partial f\hfill}{\partial \xsut}(X_t)-\frac{\partial g\hfill}{\partial\xsut}\left(X_t^{u_t}\right)+\frac{\partial h_t\hfill}{\partial \xsut}(X_t)\right)\,dt+\sqrt{2\frac{\partial X_t\hfill}{\partial X_t^{u_t}}}dB_t,
\end{equation}

where
\begin{enumerate}[(i)]

\item $X_0$ is distributed according to an initial density $\rho_0$. At each subsequent time $t$, $X_t$ admits a density $\rho_t=e^{-h_t}$. 

\item $u_t$ is a convex function whose gradient is the Brenier map transporting $\rho_t$ to $e^{-g}$. That is, $(\nabla u_t)_{\#} \rho_t=e^{-g}$. At each time $t$, this leads to a mirror coordinate system $x \mapsto x^{u_t}$. 

\item $\frac{\partial f\hfill}{\partial \xsut}(x)$ refers to the derivative of $x \mapsto f(x)$ with respect to the dual variable $x^{u_t}$. Same for $\frac{\partial h_t\hfill}{\partial \xsut}(x)$.

\item $\frac{\partial g\hfill}{\partial \xsut}(x^{u_t})$ is the gradient of the map $y \mapsto g(y)$ evaluated at $y=x^{u_t}$. 

\item $(B_t,\; t\geq 0)$ is a standard $d$-dimensional  Brownian motion and the diffusion matrix 
$\displaystyle 2\frac{\partial X_t\hfill}{\partial X_t^{u_t}}$ 
at time $t$ is  
\[
2\frac{\partial x\hfill}{\partial x^{u_t}}=2\left( \nabla^2 u_t(x) \right)^{-1},
\]
evaluated at $X_t=x$. 
\end{enumerate}
\end{defn}

Sinkhorn diffusion is an example of Mckean-Vlasov family of diffusions \cite{mckean1966class}.  The study of such systems originated from the probabilistic study of the Boltzmann and Vlasov equations due to~\cite{dobrushin79,kac1956foundations,mckean75,tanaka78} and many others. For modern surveys, see ~\cite{ChaintronDiez,Jabin14,SznitmanSF,villani12notes}.  



We will show that a weak solution of the SDE exists and is unique under suitable assumptions. Towards this, suppose that solution of the PMA \eqref{eq:pma} exists which satisfies Assumption \ref{asn:solcon} with the initial condition $u_0$.
In fact, Assumption \ref{asn:solcon} gives sufficient regularity to the mirroring map $\nabla u_t$, which gives the mirror map $x\mapsto x^{u_t}$. 
The Sinkhorn diffusion always has a dual diffusion process, say $Y$,  given via this mirror map, i.e., $Y_t=X_t^{u_t}$.   As we will show in \cref{thm:existpropX} below $Y$ satisfies the following SDE: 
\begin{equation}\label{eq:dualdiffSDE} 
\begin{split}
dY_t&= -\nabla h_t\left(Y^{w_t}_t\right)dt +\sqrt{2\frac{\partial Y_t\hfill}{\partial Y_t^{w_t}}}d B_t,
\end{split}
\end{equation}
where $\rho_t=e^{-h_t}= \left(\nabla w_t\right)_{\#} e^{-g}$ is the pushforward of $e^{-g}$ by the map $y \mapsto \nabla w_t(y)$. 
Here $\nabla h_t(Y_t^{w_t})$ is the gradient of $h_t$ with respect to its argument, evaluated at $Y_t^{w_t}$. 


\begin{thm}\label{thm:existprop}
Let
\[
b(t,y):=- \nabla h_t(y^{w_t}), \quad \sigma(t,y):=\sqrt{2 \frac{\partial y\hfill}{\partial y^{w_t}}}= \sqrt{2  \left(\nabla^{2}\left( w_t(y)\right)\right)^{-1}}.
\]

Additionally, suppose the standard global Lipschitz and linear growth conditions hold,  namely, for some $K>0$, $b, \sigma$ are uniformly $K$-Lipschitz functions on $\rr^d$ and,  
    \begin{equation}\label{eq:growth}
    \norm{b(t,y)}^2 + \norm{\sigma(t,y)}^2 \le K\left(1 + \norm{y}^2 \right)
    \end{equation}
for all $t\ge 0$. 
Then, if the initial distribution is square-integrable, the SDE \eqref{eq:dualdiffSDE} admits a unique strong solution such that every subsequent $Y_t$ is also square-integrable.  

The infinitesimal generator of the process at time $t$, acting on a $\diffcont^2$ function $\phi$, is given by 
\[
\mathcal{L}_t^Y \phi = e^{g} \div \left( e^{-g} \nabla_{y^{w_t}}\phi \right).
\]
Consequently $e^{-g}$ is a stationary distribution for this process.
\end{thm}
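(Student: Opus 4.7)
The plan is to handle the three assertions in order: (i) existence, uniqueness, and $L^2$-propagation, (ii) the divergence-form identification of the generator, and (iii) stationarity of $e^{-g}$. For (i), since $h_t$ and $w_t$ are determined by the PMA and \cref{asn:solcon}, the coefficients $b(t,y) = -\nabla h_t(y^{w_t})$ and $\sigma(t,y) = \sqrt{2(\nabla^2 w_t(y))^{-1}}$ are deterministic in $(t,y)$, so the explicitly-imposed global Lipschitz and linear growth hypotheses place us squarely in the setting of the classical It\^o existence and uniqueness theorem (see, e.g., \cite[Theorem 5.2.9]{karatzas1991brownian}), which delivers a unique strong solution $(Y_t)_{t\ge 0}$. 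The propagation of square-integrability then follows from a standard Gronwall estimate on $\E\|Y_t\|^2$ using \eqref{eq:growth} together with It\^o's isometry.

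For (ii), apply It\^o's formula to $\phi(Y_t)$ for $\phi\in \diffcont^2$ to read off
\[
\mathcal{L}_t^Y\phi(y) = -\iprod{\nabla h_t(y^{w_t}),\,\nabla\phi(y)} + \trc\bigl((\nabla^2 w_t(y))^{-1}\nabla^2\phi(y)\bigr).
\]
Using the chain rule $\nabla_{y^{w_t}}\phi = (\nabla^2 w_t)^{-1}\nabla\phi$, the target divergence form expands as
\[
e^{g}\div\bigl(e^{-g}(\nabla^2 w_t)^{-1}\nabla\phi\bigr) = \trc\bigl((\nabla^2 w_t)^{-1}\nabla^2\phi\bigr) + \sum_j\frac{\partial\phi}{\partial y_j}\!\left[\sum_i\frac{\partial ((\nabla^2 w_t)^{-1})_{ij}}{\partial y_i} - \sum_i ((\nabla^2 w_t)^{-1})_{ij}\frac{\partial g}{\partial y_i}\right].
\]
The second-order terms match automatically, so the identification reduces to showing that the $j$-th component of $\nabla h_t(y^{w_t})$ equals $\sum_i ((\nabla^2 w_t)^{-1})_{ij}\partial_{y_i} g(y) - \sum_i \partial_{y_i}((\nabla^2 w_t)^{-1})_{ij}$ for every $j$. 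I would obtain this by differentiating the Jacobian identity $h_t(y^{w_t}) = g(y) + \ldet(\partial y^{w_t}/\partial y)$ (which is \cref{lem:jacobian} with $\phi = w_t$, $a = g$) in $y$ and solving for $\nabla h_t(y^{w_t})$, and then converting the residual $(\nabla^2 w_t)^{-1}\nabla_y\ldet(\nabla^2 w_t)$ into the vector with $j$-th component $-\sum_i\partial_{y_i}((\nabla^2 w_t)^{-1})_{ij}$ via \cref{lem:tensorel}, together with the chain rule $\partial/\partial y^{w_t}_j = \sum_k ((\nabla^2 w_t)^{-1})_{kj}\partial/\partial y_k$ used to convert $\partial/\partial y^{w_t}_j$ into $y$-derivatives. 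This reconciliation of the Jacobi formula for $\partial_y \ldet$ with \cref{lem:tensorel} is the only nontrivial step; the regularity required to make every operation legal is already built into \cref{asn:solcon}.

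Claim (iii) is then immediate: for any $\phi \in \diffcont^2_c(\R^d)$,
\[
\int \mathcal{L}_t^Y\phi(y)\,e^{-g(y)}\,dy = \int \div\bigl(e^{-g}(\nabla^2 w_t)^{-1}\nabla\phi\bigr)dy = 0
\]
by the divergence theorem, and a cutoff exhaustion together with the decay/growth controls in \cref{asn:solcon} extends the vanishing to the relevant domain of $\mathcal{L}_t^Y$, which is precisely the stationarity of $e^{-g}$ for the time-inhomogeneous generator $(\mathcal{L}_t^Y)_{t\ge 0}$. The same identity simultaneously exhibits $\mathcal{L}_t^Y$ as symmetric on $L^2(e^{-g})$, with Dirichlet form $\int\iprod{(\nabla^2 w_t)^{-1}\nabla\phi_1,\,\nabla\phi_2}\,e^{-g}\,dy$, so the process is in fact reversible with respect to $e^{-g}$ at each frozen time $t$.
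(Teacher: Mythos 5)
Your proposal is correct and follows essentially the same route as the paper: existence and square-integrability via the classical It\^o theorem (the same reference, \cite[Theorem 5.2.9]{karatzas1991brownian}), identification of the generator by combining It\^o's formula with the change-of-measure identity $h_t(y^{w_t}) = g(y) + \ldet(\nabla^2 w_t(y))$ and \cref{lem:tensorel}, and stationarity by integrating the divergence form against $e^{-g}$. The only difference is organizational — you expand $e^{g}\div(e^{-g}\nabla_{y^{w_t}}\phi)$ and match first-order coefficients, while the paper substitutes the same identities into the It\^o generator and recombines via the product rule — which amounts to the same computation.
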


\begin{remark}\label{rem:suffcon}
    The condition \eqref{eq:growth} holds under \cref{asn:solcon} via elementary computations. In particular, this implies that under \cref{asn:solcon}, the conclusion in \cref{thm:existprop} holds.
\end{remark}

\begin{proof}[Proof of~\cref{thm:existprop}] The claim about existence, uniqueness (pathwise and in law) and square-integrability follow from \cite[Theorem 5.2.9]{karatzas1991brownian}.

It remains to compute the infinitesimal generator. Let $\varphi\in \diffcont^2$. Then, by It\^o's formula 
\[
\begin{split}
    \mathcal{L}^Y_t\varphi &= - \frac{\partial \varphi}{\partial y} \cdot \frac{\partial h_t\hfill}{\partial y^{w_t}}(y^{w_t}) + \sum_{i=1}^d \sum_{l=1}^d \frac{\partial y_l\hfill}{\partial y^{w_t}_i} \frac{\partial^2 \varphi}{\partial y_i \partial y_l}.
\end{split}
\]
By the formula for change of measures
\[
-h_t(y^{w_t})= -g(y) - \log \det \frac{\partial y\hfill}{\partial y^{w_t}}.
\]
Taking gradients with respect to $y^{w_t}$ on both sides and using formula \eqref{eq:tensorelpf} we get 
\[
\begin{split}
  \mathcal{L}^Y_t\varphi &=  -\frac{\partial \varphi}{\partial y} \cdot \frac{\partial g\hfill}{\partial y^{w_t}}(y) + \sum_{i=1}^d\sum_{l=1}^d \frac{\partial \varphi}{\partial y_l} \frac{\partial^2 y_l}{\partial y_i \partial y^{w_t}_i} + \sum_{i=1}^d \sum_{l=1}^d  \frac{\partial^2 \varphi}{\partial y_i \partial y_l}\frac{\partial y_l\hfill}{\partial y^{w_t}_i}\\
  &= -\frac{\partial \varphi}{\partial y} \cdot \frac{\partial g\hfill}{\partial y^{w_t}}(y) + \sum_{i=1}^d \frac{\partial}{\partial y_i}\left[ \sum_{l=1}^d \frac{\partial \varphi}{\partial y_l} \frac{\partial y_l\hfill}{\partial y^{w_t}_i} \right]\\
  &=-\frac{\partial \varphi}{\partial y} \cdot \frac{\partial g\hfill}{\partial y^{w_t}}(y) + \sum_{i=1}^d \frac{\partial^2 \varphi}{\partial y_i \partial y^{w_t}_i}= e^{g}\div\left( e^{-g} \nabla_{y^{w_t}} \varphi\right).
\end{split}
\]

That $e^{-g}$ is a stationary measure follows immediately, since
\[
\int \mathcal{L}^Y_{t} \varphi(y) e^{-g(y)}dy = \int \div\left( e^{-g} \nabla_{y^{w_t}} \varphi\right) dy=0.
\]
\end{proof}

\begin{remark}
    The above diffusion generator is a time-inhomogeneous analog of the one described by \cite[Section 2]{Kolesnikov2012HessianMC}. 
\end{remark}

\begin{thm}\label{thm:existpropX}
On any filtered probability space that supports a standard $d$-dimensional Brownian motion, let $Y$ be a solution of \eqref{eq:dualdiffSDE}. Define $X_t:=Y_t^{w_t}$. Then $(X_t,\; t\geq 0)$ is a strong solution of \eqref{eq:diffSDE}. Conversely, for any solution $X$ of \eqref{eq:diffSDE}, the transformed process $Y_t=X_t^{u_t}$ is a solution of \eqref{eq:dualdiffSDE}. Hence, under the assumptions of Theorem \ref{thm:existprop}, a strong solution of \eqref{eq:diffSDE} exists and is unique in law. The marginals of $X_t$ so constructed satisfy \eqref{eq:velocity}. 
\end{thm}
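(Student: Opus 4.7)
The plan is to establish the bijection $X\leftrightarrow Y$ at the SDE level via It\^o's formula, transfer weak well-posedness from $Y$ to $X$ through the $\mathcal{C}^{1,2}$ time-dependent diffeomorphism $\nabla w_t$, and then identify the marginals as the Sinkhorn flow using the invariance of $e^{-g}$ under the generator $\mathcal{L}^Y_t$ established in \cref{thm:existprop}. I will focus on the forward direction; the converse is symmetric, with the roles of $u_t$ and $w_t$ swapped and the PMA \eqref{eq:pma} used in place of the dual PMA \eqref{eq:dualPMA}.

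Starting from a weak solution $Y$ of \eqref{eq:dualdiffSDE} and setting $X_t:=\nabla w_t(Y_t)$, It\^o's formula (valid by \cref{asn:solcon} and \cref{rem:dualasn}) splits $dX_t^{(i)}$ into three contributions: the time derivative $\partial_i(\partial_t w_t)(Y_t)\,dt$, the push-forward of $dY_t$ by $\nabla^2 w_t(Y_t)$, and a second-order correction $\sum_{j,k}\partial_i\partial_j\partial_k w_t(Y_t)\,(\nabla^2 u_t(X_t))_{jk}\,dt$ coming from $\tfrac12 d[Y^{(j)},Y^{(k)}]_t = (\nabla^2 u_t(X_t))_{jk}\,dt$. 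The diffusion parts match immediately: the covariance of the resulting Brownian increment is $\nabla^2 w_t\cdot 2\nabla^2 u_t\cdot\nabla^2 w_t = 2\nabla^2 w_t$, which equals the target diffusion matrix in \eqref{eq:diffSDE}.

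The drift identification is the algebraic crux. Substituting the dual PMA \eqref{eq:dualPMA} yields
\[
\partial_i(\partial_t w_t)(y) = \partial_i g(y) - \sum_j (\nabla^2 w_t)_{ij}\,\partial_j f(x) + \sum_{j,k}(\nabla^2 u_t(x))_{jk}\,\partial_i\partial_j\partial_k w_t(y),
\]
where the last term arises from the standard trace identity for $\partial_i\log\det\nabla^2 w_t$ together with $(\nabla^2 w_t)^{-1}=\nabla^2 u_t$. Rewriting the target drift of \eqref{eq:diffSDE} via $(\partial f/\partial x^{u_t})_i = \sum_j(\nabla^2 w_t)_{ij}\partial_j f$ and similarly for $h_t$, the match between the computed and target drifts reduces to the identity
\[
\sum_j(\nabla^2 w_t)_{ij}\,\partial_j h_t(x) \;=\; \partial_i g(y) + \sum_{j,k}(\nabla^2 u_t(x))_{jk}\,\partial_i\partial_j\partial_k w_t(y).
\]
This identity is obtained by differentiating in $y_i$ the change-of-measure formula $h_t(y^{w_t}) = g(y) + \log\det\nabla^2 w_t(y)$, which is \cref{lem:jacobian} applied to $(\nabla w_t)_\#e^{-g}=e^{-h_t}$. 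This closes the SDE verification.

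With the bijection in hand, weak existence and uniqueness for \eqref{eq:diffSDE} transfer from \cref{thm:existprop}. For the marginals, initialize $X_0\sim\rho_0$, so that $Y_0=\nabla u_0(X_0)\sim (\nabla u_0)_\#\rho_0 = e^{-g}$. Since \cref{thm:existprop} shows $\int \mathcal{L}^Y_t\varphi\,e^{-g}\,dy = 0$ for every test $\varphi$, the measure $e^{-g}$ solves the time-inhomogeneous Fokker--Planck equation for $Y$; by standard uniqueness for linear parabolic equations under the regularity of \cref{asn:solcon}, the law of $Y_t$ is $e^{-g}$ for all $t$. Consequently $X_t = \nabla w_t(Y_t)\sim (\nabla w_t)_\#e^{-g} = \rho_t$, which by \cref{thm:existlin} is the Sinkhorn flow satisfying \eqref{eq:velocity}. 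The principal obstacle is the drift calculation, where a seemingly excess log-det term is eliminated only by the specific identity obtained from differentiating the dual push-forward relation; all other steps are routine given the smoothness provided by \cref{asn:solcon}.
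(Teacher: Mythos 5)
Your proposal is correct and follows essentially the same route as the paper: It\^o's formula applied to $X_t=\nabla w_t(Y_t)$, the dual PMA \eqref{eq:dualPMA}, the change-of-measure identity of \cref{lem:jacobian} (differentiated in $y$, which is exactly the combination of the Jacobian formula with the log-determinant derivative that the paper packages via \cref{lem:tensorel}), the matching of diffusion covariances $\nabla^2 w_t\,(2\nabla^2 u_t)\,\nabla^2 w_t=2\nabla^2 w_t$, the symmetric converse, and transfer of weak uniqueness from \cref{thm:existprop}. The only divergence is the marginal step, where you identify the law of $X_t$ as $\rho_t=(\nabla w_t)_{\#}e^{-g}$ via invariance of $e^{-g}$ for $Y$ and then cite \cref{thm:existlin}, whereas the paper redoes an explicit generator/integration-by-parts computation to exhibit the continuity equation with velocity \eqref{eq:velocity}; both are valid, and your version leans on the stationarity assertion of \cref{thm:existprop} in the same way the paper's computation implicitly does when it uses $e^{-h_t}$ as the density of $X_t$.
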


Of course, one may also impose global Lipschitzness and linear growth property on the drift and diffusion coefficients of $X$ to obtain a strong solution. The reason we used the $Y$ process is one, that it has fewer terms in the drift, and two, we only require the $Y$ process to run in stationarity to obtain a weak solution for $X$.

In order to prove \cref{thm:existpropX}, we need the well-known Ito's lemma \cite{karatzas1991brownian}, which we note down here for easy reference. 

\begin{lmm}\label{lem:ito}
Consider an SDE of the form
$$d X_t=b(t,X_t)\,dt+\sigma(t,X_t)\,dB_t.$$
Here $b:[0,\infty)\times \R^d\to \R^d$ and $\sigma:[0,\infty)\times \R^d \rightarrow \R^d \times \R^d$ are progressively measurable functions. Let $\phi:[0,\infty)\times \R^d\to\R$ be an element in  $\diffcont^{1,2}$. Define $S_t=\phi(t,X_t)$. Then 
\begin{align}\label{eq:ito}
d S_t= \frac{\partial}{\partial x}  \phi(t,X_t)^{\top}& \sigma(t,X_t)\,dB_t+\left[\frac{\partial}{\partial t}\phi(t,X_t) + \frac{\partial}{\partial x} \phi(t,X_t)^{\top} b(t,X_t)\right] dt \nonumber\\
&+\frac{1}{2}\trc\left(\sigma(t,X_t) \sigma(t,X_t)^{\top}\nabla^2_x \phi(t,X_t)\right) dt.
\end{align}
\end{lmm}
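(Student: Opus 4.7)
The proof I would give is the classical one: reduce by localization to a bounded setting, then realize $dS_t$ as the limit of a Riemann sum arising from Taylor expansion of $\phi$ along a refining partition. Fix $T > 0$ and introduce stopping times $\tau_n = \inf\{t \geq 0 : |X_t| + |b(t,X_t)| + |\sigma(t,X_t)| \geq n\}$; on $[0, T \wedge \tau_n]$ all quantities are bounded and the first two spatial derivatives and the time derivative of $\phi$ are uniformly continuous on the compact region visited by $(t,X_t)$, which is what will eventually control the remainder terms.

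For a partition $\Pi = \{0 = t_0 < \cdots < t_N = t\}$ with mesh $|\Pi| \to 0$, telescope $S_t - S_0 = \sum_{k=0}^{N-1}[\phi(t_{k+1}, X_{t_{k+1}}) - \phi(t_k, X_{t_k})]$ and Taylor expand each increment to second order in $x$ and first order in $t$:
\[
\phi(t_{k+1}, X_{t_{k+1}}) - \phi(t_k, X_{t_k}) = \partial_t \phi\, \Delta t_k + (\partial_x \phi)^\top \Delta X_k + \tfrac{1}{2} (\Delta X_k)^\top \nabla^2_x \phi\, \Delta X_k + R_k,
\]
where derivatives are evaluated at $(t_k, X_{t_k})$, $\Delta X_k = X_{t_{k+1}} - X_{t_k}$, and $R_k$ is a uniformly controlled remainder. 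Substituting $\Delta X_k = \int_{t_k}^{t_{k+1}} b\, ds + \int_{t_k}^{t_{k+1}} \sigma\, dB_s$, the first two terms yield ordinary Riemann sums converging to $\int_0^t \partial_t \phi\, ds$ and $\int_0^t (\partial_x \phi)^\top b\, ds$, together with a stochastic sum that converges in $L^2$ (by the It\^o isometry and continuity of the integrand on $[0,T\wedge\tau_n]$) to $\int_0^t (\partial_x \phi)^\top \sigma\, dB_s$.

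The key step is the quadratic term $\tfrac{1}{2} (\Delta X_k)^\top \nabla^2_x \phi\, \Delta X_k$. Its dominant Brownian--Brownian contribution is compared with $\tfrac{1}{2}\sum_k \trc\bigl(\sigma \sigma^\top \nabla^2_x \phi\bigr)(t_k, X_{t_k})\, \Delta t_k$. A direct computation of $\mathcal{F}_{t_k}$-conditional means and variances, using $\E[\Delta B^i_k \Delta B^j_k \mid \mathcal{F}_{t_k}] = \delta_{ij} \Delta t_k$ together with the variance formula for quadratic forms in conditionally Gaussian increments, shows the difference tends to $0$ in $L^2$ as $|\Pi| \to 0$; meanwhile the Riemann sum converges to $\int_0^t \tfrac{1}{2}\trc(\sigma \sigma^\top \nabla^2_x \phi)\, ds$. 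The cross terms involving $dt \cdot dB$ and $(dt)^2$ are negligible of order $|\Pi|^{1/2}$ and $|\Pi|$ respectively, and the remainder $\sum_k R_k$ vanishes by uniform continuity of $\nabla^2_x \phi$ on the localized region combined with the a priori bound $\E\sum_k |\Delta X_k|^2 = O(t)$.

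The main obstacle is the rigorous $L^2$ convergence of the sum of quadratic Brownian increments to its trace expression; this requires careful bookkeeping of conditional moments of iterated It\^o integrals, exploiting the orthogonality of distinct Brownian components and the uniform continuity of $\sigma\sigma^\top \nabla^2_x \phi$ on the localized region. Once this is established, combining the limits yields \eqref{eq:ito} on $[0, T \wedge \tau_n]$; sending $n \to \infty$ and using $\tau_n \uparrow \infty$ almost surely (which follows from the finiteness of $X$ on compact time intervals, itself a consequence of the progressive measurability and continuity of $X$) upgrades the identity to the full time interval in the stated generality.
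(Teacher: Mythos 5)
The paper does not prove this lemma at all: it is stated only for ease of reference and attributed to the standard literature (Karatzas–Shreve), so there is no in-paper argument to compare against. Your proposal is the classical textbook proof of that cited result — localization by stopping times, telescoping a Taylor expansion along a partition, freezing the coefficients to identify the quadratic Brownian contribution with the trace term in $L^2$, and passing to the limit — and it is correct in outline and consistent with the source the paper invokes.
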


\begin{proof}[Proof of Theorem \ref{thm:existpropX}]
By \cref{thm:existprop}, there exists a strong Markov process $(Y_t,\ t\ge 0)$ which is a weak solution to the SDE in \eqref{eq:dualdiffSDE}. 
Consider $X_t=Y_t^{w_t}$. By \cref{asn:solcon} (iv), we can apply It\^{o}'s formula in \cref{lem:ito} with
$$b(t,y)=-\frac{\partial h_t \hfill}{\partial y^{w_t}}(y^{w_t}),\quad \sigma^2(t,y)=2\frac{\partial y}{\partial y^{w_t}},\quad \phi(t,y)=y^{w_t},$$ 
to get that $(X_t,\ t\ge 0)$ is Markov and:
\begin{align}\label{eq:dual2}
dX_t&= \frac{\partial Y_t^{w_t}}{\partial Y_t\hfill}  \sqrt{2\frac{\partial Y_t\hfill}{\partial Y_t^{w_t}}}\,dB_t+\left[\left(\frac{\partial}{\partial t}\nabla w_t\right)(Y_t) - \frac{\partial Y_t^{w_t}}{\partial Y_t}  \frac{\partial h_t\hfill}{\partial y^{w_t}}(Y_t^{w_t})\right] dt\nonumber \\ &+\frac{1}{2}\trc\left(2\frac{\partial Y_t\hfill}{\partial Y_t^{w_t}}\nabla^2_y (Y_t)^{w_t}_i\right)_{i\in [d]} dt.
\end{align}
By using \cref{lem:dualPMA} and the multivariate chain rule, the second term on the right hand side above reduces to
\begin{align}\label{eq:dual1}
&\;\;\;\;\;\left(\frac{\partial}{\partial t}\nabla w_t\right)(Y_t) - \frac{\partial Y_t^{w_t}}{\partial Y_t}\frac{\partial h_t\hfill}{\partial y^{w_t}}(Y_t^{w_t})\nonumber \\&=\frac{\partial g}{\partial y}(Y_t)-\frac{\partial f}{\partial y}(Y_t^{w_t})+\frac{\partial}{\partial y}\ldet \left(\frac{\partial Y_t^{w_t}}{\partial Y_t\hfill}\right)-\frac{\partial h_t}{\partial y\hfill}(Y_t^{w_t})\nonumber \\&=\frac{\partial g\hfill}{\partial x^{u_t}}(X_t^{u_t})-\frac{\partial f\hfill}{\partial x^{u_t}}(X_t)-\frac{\partial}{\partial x^{u_t}}\ldet\left(\frac{\partial X_t\hfill}{\partial X_t^{u_t}\hfill}\right)-\frac{\partial h_t\hfill}{\partial x^{u_t}}(X_t) \nonumber \\ &=-\frac{\partial f\hfill}{\partial x^{u_t}}(X_t).
\end{align}
In the third display above, we have used that $Y_t=X_t^{u_t}$. In the fourth display, we use \cref{lem:jacobian} with $\phi=u_t$, $a=h_t$ and $b=g$. Next let us simplify the third term on the right hand side of \eqref{eq:dual2}, for each $i\in [d]$.
\begin{align*}
\trc\left(\frac{\partial Y_t\hfill}{\partial Y_t^{w_t}}\nabla^2_y (Y_t)^{w_t}_i\right)&=\sum_{k} \frac{\partial^2}{\partial y^{w_t}_k \partial y_k}(Y_t)^{w_t}_i=\sum_{k}\frac{\partial^2}{\partial x_k\partial x^{u_t}_k}(X_t)_i.
\end{align*}
In the above display, we have used once again that $Y_t=X_t^{u_t}$. By combining the above observation with  \cref{lem:tensorel}, we then have:
\begin{align}\label{eq:dual3}
&\;\;\;\;\;\trc\left(\frac{\partial Y_t\hfill}{\partial Y_t^{w_t}}\nabla^2_y (Y_t)^{w_t}_i\right)\nonumber \\&=-\frac{\partial}{\partial x^{u_t}_i}\ldet\left(\frac{\partial X_t^{u_t}}{\partial X_t\hfill}\right)=\frac{\partial h_t\hfill}{\partial x^{u_t}_i}(X_t)-\frac{\partial g\hfill}{\partial x^{u_t}_i}(X_t^{u_t}).
\end{align}
By combining \eqref{eq:dual2}, \eqref{eq:dual1} and \eqref{eq:dual3}, we get that $(X_t,\ t\ge 0)$ is a strong Markov process. Note that $X$ is driven by the same Brownian motion that drives the strong solution of $Y$. Thus we have constructed a strong solution of \eqref{eq:diffSDE}.

\bigskip 

\noindent Note that there exists a unique strong Markov process which is a weak solution to \eqref{eq:dualdiffSDE} by \cref{thm:existprop}. In order to establish uniqueness in \cref{thm:existpropX}, it suffices to show that given any strong Markov process $(X_t,\ t\ge 0)$ which is a weak solution of \eqref{eq:diffSDE}, the process $(Y_t=X_t^{u_t},\ t\ge 0)$ is a weak solution to \eqref{eq:dualdiffSDE}. Once again, we use It\^{o}'s lemma \ref{lem:ito}, this time with 
$$b(t,x)=-\frac{\partial f\hfill}{\partial \xsut}(x)-\frac{\partial g\hfill}{\partial\xsut}\left(x^{u_t}\right)+\frac{\partial h_t\hfill}{\partial \xsut}(x), \quad \sigma^2(t,x)=2\frac{\partial x\hfill}{\partial x^{u_t}},\quad \phi(t,x)=x^{u_t}.$$
This gives
\begin{align}\label{eq:dual4}
dY_t= \frac{\partial X_t^{u_t}\hfill}{\partial X_t\hfill}  \sqrt{2\frac{\partial X_t\hfill}{\partial X_t^{u_t}}}\,dB_t&+\Bigg[\left(\frac{\partial}{\partial t}\nabla u_t\right)(X_t) -\frac{\partial X_t^{u_t}}{\partial X_t\hfill}\Bigg(\frac{\partial f\hfill}{\partial \xsut}(X_t)+\frac{\partial g\hfill}{\partial\xsut}\left(X_t^{u_t}\right)\nonumber \\ &-\frac{\partial h_t\hfill}{\partial \xsut}(X_t)\Bigg)\Bigg] dt+\frac{1}{2}\trc\left(2\frac{\partial X_t\hfill}{\partial X_t^{u_t}}\nabla^2_x (X_t)^{u_t}_i\right)_{i\in [d]} dt.
\end{align}
By using \eqref{eq:pma}, we get:
\begin{align*}
&\;\;\;\;\left(\frac{\partial}{\partial t}\nabla u_t\right)(X_t) -\frac{\partial X_t^{u_t}}{\partial X_t\hfill}\left(\frac{\partial f\hfill}{\partial \xsut}(X_t)+\frac{\partial g\hfill}{\partial\xsut}\left(X_t^{u_t}\right)-\frac{\partial h_t\hfill}{\partial \xsut}(X_t)\right)\\ &=-2\frac{\partial g}{\partial x}(X_t^{u_t})+\frac{\partial h_t}{\partial x}(X_t)+\frac{\partial}{\partial x}\ldet\left(\frac{\partial X_t^{u_t}}{\partial X_t\hfill}\right)=-\frac{\partial g}{\partial x}(X_t^{u_t}).
\end{align*}
Here the last equality follows by invoking \cref{lem:jacobian} with $\phi=u_t$, $a=h_t$ and $b=g$. Finally, fix $i\in [d]$ and note that by the same computation as in \eqref{eq:dual3}, we have:
\begin{align*}
\trc\left(\frac{\partial X_t\hfill}{\partial X_t^{u_t}}\nabla^2_x (X_t)^{u_t}_i\right)=\frac{\partial}{\partial x_i}\ldet\left(\frac{\partial X_t^{u_t}}{\partial X_t\hfill}\right)=\frac{\partial g\hfill}{\partial x_i}(X_t^{u_t})-\frac{\partial h_t}{\partial x_i}(X_t).
\end{align*}
Combining the two displays above with \eqref{eq:dual4} and using that $Y_t=X_t^{u_t}$, we get that $(Y_t,\ t\ge 0)$ is a strong Markov process which is a weak solution of \eqref{eq:dualdiffSDE}.

\noindent We will use Ito's rule to establish the flow of the marginals. Pick a smooth, compactly supported real-valued function $\phi(\cdot)$, then by invoking Ito's rule in~\eqref{eq:ito} with 
\begin{align*}
b(t,x)=-\frac{\partial f\hfill}{\partial x^{u_t}}(x)-\frac{\partial g\hfill}{\partial x^{u_t}}(x^{u_t})+\frac{\partial h_t\hfill}{\partial x^{u_t}}(x),\quad \quad \sigma^2(t,x)=2\frac{\partial x\hfill}{\partial x^{u_t}},
\end{align*}
the expectation of the generator is given by:
\begin{align*}
&=\E[\mathcal{L}(\phi)(X_t)]\\ &=\int \left\langle\frac{\partial\phi}{\partial x}(x),b(t,x)\right\rangle \exp(-h_t(x))\,dx + \sum_{i,j}\int \frac{\partial}{\partial x_i}\left(\frac{\partial \phi\hfill}{\partial x_j}(x)\right)\left(\frac{\partial x\hfill}{\partial x^{u_t}}\right)_{i,j}\exp(-h_t(x))\,dx\\ &=\int \left\langle\frac{\partial\phi}{\partial x}(x),b(t,x)\right\rangle \exp(-h_t(x))\,dx + \sum_{j}\int \frac{\partial}{\partial x^{u_t}_j}\left(\frac{\partial \phi\hfill}{\partial x_j}(x)\right)\exp(-h_t(x))\,dx\\ &=\int \left\langle\frac{\partial\phi}{\partial x}(x),b(t,x)\right\rangle \exp(-h_t(x))\,dx + \sum_{j}\int \frac{\partial\phi\hfill}{\partial x_{j}}(x)\frac{\partial}{\partial x^{u_t}_j}\exp(-h_t(x))\,dx\\ &=\int \left\langle\frac{\partial\phi}{\partial x}(x),b(t,x)+\frac{\partial}{\partial x^{u_t}}(g(x^{u_t}))\right\rangle \exp(-h_t(x))\,dx\\ &=\int \left\langle\frac{\partial\phi}{\partial x}(x),-\frac{\partial}{\partial x^{u_t}}(f-h_t)(x)\right\rangle \exp(-h_t(x))\,dx. 
\end{align*}
By the absolute continuity of $(\rho_t)$, there exists a velocity field $v_t(\cdot)\in \R^d$ such that the continuity equation 
\begin{equation}\label{eq:continuity}
\frac{\partial\rho_t}{\partial_t\hfill} +\div{(v_t \rho_t)}=0
\end{equation}
is satisfied in the sense that 
\begin{align*}
\E[\mathcal{L}(\phi)(X_t)]=\int \left\langle \frac{\partial\phi}{\partial x\hfill}(x),v_t(x)\right\rangle \exp(-h_t(x))\,dx.
\end{align*}
As the above displays hold for all smooth $\phi(\cdot,\cdot)$, by comparing them, we get:
$$v_t(x)=-\frac{\partial}{\partial x^{u_t}}(f-h_t)(x).$$

\noindent This completes the proof.
\end{proof} 
    




 
%

\section{Sinkhorn Markov chain and Limiting Dynamics}\label{sec:mcconst}

\noindent Fix $\epsilon>0$. Following \cite{berman2020}, we will track the evolution of the Sinkhorn algorithm  using the following  maps $\opV:\diffcont(\R^d)\to \diffcont(\R^d)$ and $\opU:\diffcont(\R^d)\to \diffcont(\R^d)$, where 
\begin{equation}\label{eq:basedef}
    \begin{split}
        \opV[u](y)&:= \vep \log\int \exp\left(\frac{1}{\vep}\langle x,y\rangle-\frac{1}{\vep}u(x)-f(x)\right)\,dx,\quad u \in \diffcont(\R^d)\\
        \opU[v](x)&:= \vep \log\int  \exp\left(\frac{1}{\vep}\langle x,y\rangle-\frac{1}{\vep}v(y)-g(y)\right)\,dy, \quad v \in \diffcont(\R^d).
    \end{split}
\end{equation}

Next, consider the new operator $\opS:\diffcont(\R^d)\to \diffcont(\R^d)$ defined by:
\begin{equation}\label{eq:Sep}
\opS[u]:=\opU\circ \opV[u].    
\end{equation}
In terms of the Sinkhorn algorithm $\opS$ tracks the potential after two successive steps.

The following proposition from \cite[eqn. (2.1.4)]{berman2020} shows an useful property of the increment $\opS[u]-u$ which will be useful in the sequel. 

\begin{prop}\label{cl:normalize}
For any $u\in \diffcont(\R^d)$, consider the nonnegative function \begin{equation}\label{eq:claim12}
\rv[u](x):=\exp\left(\frac{1}{\vep}\left(\opS[u](x)- u(x)\right)\right).
\end{equation}
Then, 
\[
\int \rv[u](x)\exp(-f(x))\,dx=1.
\]
That is, $\rv[u]\exp(-f)$ is a probability measure. 
\end{prop}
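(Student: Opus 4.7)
The proposition is essentially a direct calculation using the definitions of $\opV$, $\opU$, and the Fubini theorem, so the plan is simply to unfold everything and observe the cancellation.

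First, I would rewrite $\rv[u](x)$ by expanding $\opS[u] = \opU\circ \opV[u]$. By the definition of $\opU$,
\[
\exp\!\left(\tfrac{1}{\vep}\opS[u](x)\right)=\int \exp\!\left(\tfrac{1}{\vep}\langle x,y\rangle - \tfrac{1}{\vep}\opV[u](y) - g(y)\right)dy,
\]
so multiplying by $\exp(-u(x)/\vep)$ gives
\[
\rv[u](x)=\int \exp\!\left(\tfrac{1}{\vep}(\langle x,y\rangle - u(x)) - \tfrac{1}{\vep}\opV[u](y) - g(y)\right)dy.
\]

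Next I would multiply both sides by $\exp(-f(x))$ and integrate in $x$. Assuming enough integrability on $u$, $f$, $g$ to justify Fubini (which is the only non-trivial input, and follows from standing regularity assumptions on $f$, $g$ and growth hypotheses on the iterates; these assumptions are implicit in the fact that $\opV[u]$, $\opU[v]$ are defined as real-valued continuous functions), the order of integration may be swapped:
\[
\int \rv[u](x)e^{-f(x)}dx=\int e^{-\opV[u](y)/\vep - g(y)}\left(\int e^{(\langle x,y\rangle - u(x))/\vep - f(x)}\,dx\right) dy.
\]
The inner integral is, by the definition of $\opV[u](y)$ in \eqref{eq:basedef}, precisely $\exp(\opV[u](y)/\vep)$. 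The two $\opV[u](y)$-terms cancel, leaving
\[
\int \rv[u](x)e^{-f(x)}dx=\int e^{-g(y)}dy=1,
\]
since $e^{-g}$ is a probability density by hypothesis.

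The entire argument is a one-line bookkeeping exercise, so there is no real obstacle beyond confirming that the iterated integral can be interchanged — which is immediate once the operators $\opV$, $\opU$ are well-defined as $\diffcont(\R^d)$-valued, since then all integrands are nonnegative and Tonelli's theorem applies unconditionally. In practice I would present the proof in two displays: the substitution step and the Fubini/cancellation step.
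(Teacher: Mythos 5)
Your proof is correct: expanding $\opS[u]=\opU\circ\opV[u]$, multiplying by $e^{-f}$, and swapping the order of integration (Tonelli applies since the integrand is nonnegative) makes the inner $x$-integral reproduce $\exp(\opV[u](y)/\vep)$ by the definition in \eqref{eq:basedef}, and the cancellation leaves $\int e^{-g(y)}\,dy=1$. The paper does not spell out a proof (it cites Berman's identity directly), and your computation is exactly the standard argument behind that citation, with the only caveat — finiteness of $\opV[u]$, i.e.\ well-definedness of the operators — correctly flagged as the implicit hypothesis.
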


The operator $\opS$ yields a natural iteration on $\diffcont(\R^d)$. Starting with some $u_0\in \diffcont(\R^d)$, consider
\begin{equation}\label{eq:twostepit}
u^{\vep}_{k+1}:=\opS[u_k],\quad \mk{k+1}:=\rv[u_k^{\vep}]\exp(-f),\quad k\ge 0.
\end{equation}
The $u_k^{\vep}$s are usually called \emph{Sinkhorn potentials}. By~\cref{cl:normalize}, we have
\begin{equation}\label{eq:discdiff}
\frac{u^{\vep}_{k+1}-u^{\vep}_k}{\vep}-f=\log{\mk{k+1}},\quad \mbox{where}\, \int_x \mk{k+1}(x)\,dx=1.
\end{equation}
Consequently, the average increment satisfies
\[
\int_x (u^{\vep}_{k+1}(x)-u^{\vep}_k(x))\,e^{-f(x)}\,dx=-\vep \KL{e^{-f}}{{\color{purple}\mk{k+1}}}\leq 0,
\]
where $\mI$ denotes the appropriate Kullback-Leibler divergence.

Note how the LHS of \eqref{eq:discdiff} looks like a discrete time derivative if iterations are indexed in units of $\epsilon$. That is, we replace $k$ by $k \vep$ and $(k+1)$ by $(k+1)\vep$. This observation will be useful later when we take scaling limit by sending $k\vep \rightarrow t>0$. 
\par 


Define the following sequence of probability densities on $\R^d\times\R^d$ for $k\ge 0$:
\begin{align}\label{eq:coupling}
\gvp_{k+1}(x,y):=\exp\left(\frac{1}{\vep}\langle x,y\rangle-\frac{1}{\vep}u^{\vep}_k(x)-\frac{1}{\vep}\opV[u^{\vep}_{k}](y)-f(x)-g(y)\right).
\end{align}
From the definitions of $\opV$, $\opU$ and $\opS$ in \eqref{eq:basedef} and \eqref{eq:claim12}, it is easy to check that
$$\int_{\R^d\times\R^d}\gvp_{k+1}(x,y)\,dx\,dy=1,\quad \forall \ k\ge 0,$$
and further
\begin{equation}\label{eq:curtaincall}
p_X \gvp_{k+1}=\mk{k},\quad p_Y \gvp_{k+1}=\exp(-g),\quad \quad \forall\ k\ge 0.
\end{equation}
Therefore $\gamma_{k}^{\vep}$ is the Schr\"{o}dinger Bridge (see \cite{Schrodinger1932}) coupling between $\mk{k}$ and $e^{-g}$.  
{\color{black}As the $Y$-marginals of all $\gvp_{k+1}$s remain stationary at $\exp(-g)$, one can construct a natural Markov chain using $\gvp_{k+1}$s. This is elucidated in the following definition.

\begin{defn}[Sinkhorn Markov chain]\label{prop:mchn}
Let $\gamma_0^{\vep}$ be some arbitrary joint distribution where $p_Y \gamma_0^\vep =e^{-g}$. For $k\ge 1$, consider the family of joint distributions $\gamma_k^\vep$ from \eqref{eq:coupling}. Then, the transition probabilities for the Sinkhorn Markov chain can be defined inductively as follows. For any $k\ge 0$, suppose $(X_k^{\vep},Y_k^{\vep})=(x,y)$. Then sample $X_{k+1}^{\vep}$ from the conditional distribution of $X|Y=y$ under $\gvp_{k+1}$. Now  suppose $X_{k+1}^\vep=x'$. Then, sample $Y^{\vep}_{k+1}$ similarly from the other conditional, i.e., $Y|X=x'$ under $\gvp_{k+1}$. Then $(X_k^{\vep},\ k\ge 0)$ forms a time-inhomogeneous Markov chain which we will call the \emph{Sinkhorn Markov chain}.
\end{defn}

The transition kernel for the Markov chain in \cref{prop:mchn} can be written out in terms of the conditional distributions under $(\gvp_{k+1},\ k\ge 0)$. In particular, note that 
\begin{equation}\label{eq:con1}
p_{Y|X}\gvp_{k+1}(y|x)=\exp\left(\frac{1}{\vep}\langle x,y\rangle-\frac{1}{\vep}\opV[u_k^{\vep}](y)-\frac{1}{\vep}\opS[u_k^{\vep}](x)-g(y)\right),
\end{equation}
and 
\begin{equation}\label{eq:con2}
p_{X|Y}\gvp_{k+1}(x|y)=\exp\left(\frac{1}{\vep}\langle x,y\rangle-\frac{1}{\vep}\opV[u_k^{\vep}](y)-\frac{1}{\vep}u_k^{\vep}(x)-f(x)\right).
\end{equation}
Then the transition kernel can be written as
\begin{align}\label{eq:tranker}
    \Pr(X^{\vep}_{k+1}\in \,dx|X^{\vep}_k=z)&=\int_{\R^d} p_{Y|X}\gvp_{k}(y|z)p_{X|Y}\gvp_{k+1}(x|y)\,dy.
\end{align}

The following is easy to check. We provide a brief proof below.

\begin{prop}\label{prop:Sinmar}
    The marginals of the \emph{Sinkhorn Markov chain} are distributed according to $\mk{k}$ for $k\ge 1$.
\end{prop}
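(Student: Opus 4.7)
The plan is to proceed by a straightforward induction on $k$, leveraging the two key facts already established: the joint density $\gvp_{k+1}$ has $Y$-marginal equal to $e^{-g}$ (from \eqref{eq:curtaincall}) and $X$-marginal equal to $\mk{k+1}$. The strategy is to show the stronger statement that, at every step $k \ge 0$, the auxiliary variable $Y_k^{\vep}$ is distributed according to $e^{-g}$; the conclusion about $X_k^{\vep} \sim \mk{k}$ then falls out immediately from the construction of the Markov chain.

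For the base case, $Y_0^{\vep} \sim e^{-g}$ holds by the initialization hypothesis stated in \cref{prop:mchn}. For the inductive step, assume $Y_k^{\vep} \sim e^{-g}$. By the construction, conditionally on $Y_k^{\vep} = y$ we draw $X_{k+1}^{\vep}$ from $p_{X|Y}\gvp_{k+1}(\cdot \mid y)$. Since $p_Y \gvp_{k+1} = e^{-g}$ by \eqref{eq:curtaincall}, the joint law of $(X_{k+1}^{\vep}, Y_k^{\vep})$ is exactly $\gvp_{k+1}$; in particular its $X$-marginal is $p_X \gvp_{k+1} = \mk{k+1}$, which is the claim about $X_{k+1}^{\vep}$. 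Next, conditionally on $X_{k+1}^{\vep} = x'$, we draw $Y_{k+1}^{\vep}$ from $p_{Y|X}\gvp_{k+1}(\cdot \mid x')$. Since we just showed $X_{k+1}^{\vep} \sim p_X \gvp_{k+1}$, the pair $(X_{k+1}^{\vep}, Y_{k+1}^{\vep})$ has joint law $\gvp_{k+1}$, and so $Y_{k+1}^{\vep} \sim p_Y \gvp_{k+1} = e^{-g}$, closing the induction.

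There is essentially no serious obstacle here: the proof is a bookkeeping exercise that exploits the fact that the two conditionals within a single step are taken with respect to the \emph{same} joint $\gvp_{k+1}$, together with the defining property \eqref{eq:curtaincall} of that joint. The only subtlety worth verifying explicitly is that when $Y_{k+1}^{\vep}$ is then used to generate $X_{k+2}^{\vep}$ via the next joint $\gvp_{k+2}$, its $e^{-g}$-distribution is again the correct marginal (which is true, since $p_Y \gvp_{k+2} = e^{-g}$ holds for every $k$). One could equivalently formalize the argument by integrating the explicit transition kernel \eqref{eq:tranker} against $\mk{k}(z)\,dz$ and verifying that the result is $\mk{k+1}(x)\,dx$, using \eqref{eq:con1}, \eqref{eq:con2}, and the identity $p_X\gvp_k(x) p_{Y|X}\gvp_k(y\mid x) = p_Y\gvp_k(y) p_{X|Y}\gvp_k(x\mid y)$, but the marginal-tracking argument above is both cleaner and more in keeping with the conceptual role of this proposition.
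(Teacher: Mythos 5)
Your argument is correct, and it rests on the same basic facts the paper uses (the two conditionals within one iteration come from the single joint $\gvp_{k+1}$, whose $Y$-marginal is pinned at $e^{-g}$ by \eqref{eq:curtaincall}); the difference is in execution. You run a purely distributional induction that tracks the auxiliary $Y$-marginal: $Y_k^{\vep}\sim e^{-g}$ implies $(Y_k^{\vep},X_{k+1}^{\vep})\sim\gvp_{k+1}$, hence $X_{k+1}^{\vep}\sim \mk{k+1}$, and then $(X_{k+1}^{\vep},Y_{k+1}^{\vep})\sim\gvp_{k+1}$ restores $Y_{k+1}^{\vep}\sim e^{-g}$; no density formulas are ever touched. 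The paper instead integrates the explicit transition kernel \eqref{eq:tranker} against the density of $X_{k-1}^{\vep}$ (implicitly the induction hypothesis that this density is $p_X\gvp_{k-1}$, which your induction makes explicit), collapses the inner integral to $e^{-g}$, and then computes the remaining $y$-integral of the exponential expression via the definition of the Sinkhorn operators, landing on $\exp\bigl(\tfrac{1}{\vep}(u_k^{\vep}-u_{k-1}^{\vep})-f\bigr)=\mk{k}$ as in \eqref{eq:discdiff}. Your route is cleaner and makes the bookkeeping of what is assumed at each step (including the role of the initialization $p_Y\gamma_0^{\vep}=e^{-g}$ in \cref{prop:mchn}) more transparent; the paper's computation buys an explicit re-derivation of the marginal density in terms of the potentials, which is consonant with how those formulas are used elsewhere. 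Both are valid, and yours is essentially the abstract version of the same one-step marginalization.
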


\begin{proof}
    By \eqref{eq:tranker}, for $k\ge 1$, we get:
    \begin{align*}
        \mathbb{P}(X_{k}^{\vep}\in \,dx)&=\int \mathbb{P}(X_{k}^{\vep}\in \, dx| X_{k-1}^{\vep}=z)p_{X}\gvp_{k-1}(z)\,dz\\ &=\int \int p_{Y|X}\gvp_{k-1}(y|z)p_{X|Y}\gvp_{k}(x|y) p_{X}\gvp_{k-1}(z)\,dy\,dz\\ &=\int p_{X|Y}\gvp_{k}(x|y)\left(\int \gvp_{k-1}(z,y)\,dz\right)\,dy\\ &=\int \exp\left(\frac{1}{\vep}\langle x,y\rangle - \frac{1}{\vep}\opV[u_{k-1}^{\vep}](y)-\frac{1}{\vep}u_{k-1}^{\vep}(x)-f(x)-g(y)\right)\,dy\\ &=\exp\left(\frac{1}{\vep}(u_{k}^{\vep}(x)-u_{k-1}^{\vep}(x))-f(x)\right)=\mk{k}(x).
    \end{align*}
    For the fourth equality, we use \eqref{eq:con2}. The fifth equality uses \eqref{eq:curtaincall}. The rest follows using elementary manipulations of conditional probabilities.

\end{proof}

It is natural to ask if the Sinkhorn Markov chain in \cref{prop:mchn} has a diffusive limit as $\vep\rightarrow 0+$. In fact, the drift and the diffusion coefficients in the SDE \eqref{eq:diffSDE} correspond to the leading order terms of the conditional expectation and the conditional variance of the one-step increment of this Markov chain. Hence, it can be conjectured that the Sinkhorn diffusion is indeed the limit of the Sinkhorn Markov chain, under suitably strong assumptions. However, the proof of this convergence appears challenging and is not taken up here. The theorem below proves the weaker statement of one-dimensional marginal convergence, i.e., the convergence of $\rho_k^{\vep}$s according to \cref{prop:Sinmar}.

\begin{thm}\label{thm:convergence}
    Suppose that Assumption \ref{asn:solcon} holds. Fix $T>0$. For $t\in [0,T]$, recall from \cref{thm:existlin} that $\rho_{t}=\exp(-h_t)=(\nabla w_t)_{\#} e^{-g}$ satisfies the Sinkhorn flow in \eqref{eq:velocity}. Then the following holds:
    
    \begin{equation}\label{eq:step1show}
    \sup_{k\ge 1\ : \ k\vep\le T}\wass_2^2(\rho_{k\vep},\mk{k})\le C_T \vep,
    \end{equation}
    where $C_T$ is a constant depending on $T$ and the constants implicit in Assumption \ref{asn:solcon}. This implies, in particular,  
    $$ \mk{\lfloor T/\vep \rfloor}\to \rho_T, \qquad \mbox{as}\ \vep\to 0$$
    for every fixed $T>0$, in the topology of weak convergence.
\end{thm}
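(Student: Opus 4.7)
\textbf{Strategy via transport maps on $e^{-g}$.} My plan is to bound $\wass_2^2(\mk{k},\rho_{k\vep})$ by an $L^2(e^{-g})$ distance between two maps pushing $e^{-g}$ to $\mk{k}$ and $\rho_{k\vep}$, and then run a Gronwall--type induction on that distance. For the flow side, $\nabla w_{k\vep}$ transports $e^{-g}$ to $\rho_{k\vep}$ by \cref{thm:existlin}. For the Sinkhorn side, I use the conditional mean
\[
T_{k-1}^{\vep}(y) := \nabla \opV[u_{k-1}^{\vep}](y) = \E_{\gvp_k}[\,X \mid Y = y\,],
\]
where $\gvp_k$ is the coupling in \eqref{eq:coupling}. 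Since $\gvp_k$ couples $\mk{k}$ with $e^{-g}$, a conditional bias--variance decomposition yields
\[
\wass_2^2(\mk{k},\rho_{k\vep}) \le \int \bigl|T_{k-1}^{\vep}(y)-\nabla w_{k\vep}(y)\bigr|^2 e^{-g(y)}\,dy + \int \trc\bigl(\mathrm{Cov}_{\gvp_k}(X\mid Y=y)\bigr)\, e^{-g(y)}\,dy.
\]
Differentiating $\opV[u_{k-1}^{\vep}]$ twice in $y$ gives $\mathrm{Cov}_{\gvp_k}(X\mid Y=y)=\vep\,\nabla^2 \opV[u_{k-1}^{\vep}](y)$, and \cref{asn:solcon} together with Laplace asymptotics make this $\vep\,\nabla^2 w_{k\vep}(y) + O(\vep^2)$ uniformly in $y$. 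The variance integral is therefore $O(\vep)$ and, crucially, enters only once at the final step, so it does not accumulate over iterations.

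\textbf{Gronwall on the transport maps.} Writing $\Delta_k(y):=T_k^{\vep}(y)-\nabla w_{(k+1)\vep}(y)$, the remaining task is to show $\|\Delta_k\|_{L^2(e^{-g})}^2 = O(\vep^2)$ uniformly for $k\vep\le T$, which combined with the variance bound above yields \eqref{eq:step1show}. Assuming the initialization of $u_0^{\vep}$ is chosen so that $\|\Delta_0\|_{L^2(e^{-g})}^2 = O(\vep^2)$, I aim for the one--step recursion
\[
\|\Delta_{k+1}\|_{L^2(e^{-g})}^2 \;\le\; (1+C\vep)\|\Delta_k\|_{L^2(e^{-g})}^2 + C\vep^3,
\]
which iterates over $k=O(1/\vep)$ steps via $(1+C\vep)^k\le e^{CT}$ to give $\|\Delta_k\|^2 = O(\vep^2)$.

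\textbf{Matching the one--step increments --- the main obstacle.} The recursion rests on comparing $T_{k+1}^{\vep}-T_k^{\vep}$ with $\nabla w_{(k+2)\vep} - \nabla w_{(k+1)\vep}$ to second order in $\vep$. On the flow side, \cref{asn:solcon}(ii)--(iii) and the dual PMA (\cref{lem:dualPMA}) give
\[
\nabla w_{(k+2)\vep} - \nabla w_{(k+1)\vep} \;=\; \vep\, \nabla \tfrac{\partial w_{(k+1)\vep}}{\partial t} + O(\vep^2) \;=\; \vep\, v_{(k+1)\vep}\!\circ\!\nabla w_{(k+1)\vep} + O(\vep^2),
\]
where the second equality is essentially \eqref{eq:labelgrad} from the proof of \cref{prop:metderlin}. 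On the Sinkhorn side, combining \eqref{eq:discdiff}, which reads $u_{k+1}^{\vep}-u_k^{\vep} = \vep(f+\log \mk{k+1})$, with a careful Laplace expansion of $\nabla\opV[u_{k+1}^{\vep}] - \nabla\opV[u_k^{\vep}]$, should produce $T_{k+1}^{\vep}-T_k^{\vep} = \vep\, V_k^{\vep} + O(\vep^2)$ with $V_k^{\vep}(y) = v_{(k+1)\vep}(\nabla w_{(k+1)\vep}(y)) + O(\vep)$. Subtracting then yields $\Delta_{k+1}-\Delta_k = O(\vep^2)$ in $L^2(e^{-g})$, and Cauchy--Schwarz on the cross term $2\vep\langle \Delta_k, \Delta_{k+1}-\Delta_k\rangle$ together with Young's inequality $2ab\le \vep a^2 + b^2/\vep$ produces the claimed $(1+C\vep)\|\Delta_k\|^2 + C\vep^3$ recursion. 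The hardest ingredient is the uniform-in-$k$ control of the $O(\vep^2)$ remainder in the Sinkhorn expansion: this requires strengthening Berman's uniform convergence $u_k^{\vep}\to u_{k\vep}$ so that the \emph{discrete time derivatives} $(u_{k+1}^{\vep}-u_k^{\vep})/\vep$ also converge quantitatively to $\partial_t u_{k\vep}$ --- precisely the ``higher order asymptotic analysis'' flagged in the introduction --- and \cref{asn:solcon}(iii), which controls four derivatives of $u_t$ and $u_t^*$, is the regularity source making such bounds possible.
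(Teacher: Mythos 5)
Your opening reduction is sound, and it is in fact the same reduction the paper makes: coupling $X\sim p_{X|Y}\gvp_k(\cdot\mid Y)$ with $\nabla w_{k\vep}(Y)$ for $Y\sim e^{-g}$ gives exactly $\wass_2^2(\mk{k},\rho_{k\vep})\le \E_{\gvp_k}\lVert X-Y^{w_{k\vep}}\rVert^2$, and your bias--variance split of this quantity (with $T_{k-1}^{\vep}=\nabla\opV[u_{k-1}^{\vep}]$ the conditional mean and $\mathrm{Cov}(X\mid Y=y)=\vep\nabla^2\opV[u_{k-1}^{\vep}](y)$) is a correct refinement of the paper's Step (a). The gap is in the engine you propose for the bias term, and, to a lesser extent, in your variance estimate. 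Your Gronwall recursion needs the one-step expansion $T_{k+1}^{\vep}-T_k^{\vep}=\vep V_k^{\vep}+O(\vep^2)$ with a remainder that is uniform in $k\vep\le T$. That expansion is a gradient-level, second-order Laplace asymptotic for integrals whose phase is the \emph{Sinkhorn} potential $u_k^{\vep}$, not the PMA potential $u_{k\vep}$. \cref{asn:solcon} only controls derivatives of $u_t$ and $u_t^*$; neither it nor the sup-norm stability one can extract from the $1$-Lipschitzness of $\opU,\opV$ (\cref{prop:contra}) yields uniform-in-$k$ bounds on $\nabla^2\opV[u_k^{\vep}]$ or $\nabla^3 u_k^{\vep}$, and sup-norm closeness $\lVert u_k^{\vep}-u_{k\vep}\rVert_\infty\le C\vep$ does not by itself transfer to gradients or Hessians. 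The same unproved regularity is hidden in your claim $\mathrm{Cov}(X\mid Y=y)=\vep\nabla^2 w_{k\vep}(y)+O(\vep^2)$ uniformly in $y$. Since the per-step remainder in your recursion is itself a functional of $u_k^{\vep}$, you cannot absorb it into $C\vep^3$ without such bounds; this is exactly the obstruction the paper's proof is organized to avoid. There, one never expands around $u_k^{\vep}$: \cref{lem:erboundmain} proves only the sup-norm bound $\lVert a_k^{\vep}\rVert_\infty+\lVert b_k^{\vep}\rVert_\infty\le C$ (by combining \cref{prop:contra} with the $O(\vep^2)$ one-step consistency $\lVert\opS[u_{k\vep}]-u_{(k+1)\vep}\rVert_\infty\le C\vep^2$ from \cref{lem:pmaboundmain}), and this is used purely as a bounded change of density to replace $\gvp_{k+1}$ by the surrogate coupling $\xi^{\vep}_{k+1}$ built from $u_{k\vep}$; all Laplace estimates are then performed on the smooth PMA potentials only.

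Two further remarks. First, your target $\lVert\Delta_k\rVert_{L^2(e^{-g})}^2=O(\vep^2)$ is stronger than the theorem requires: the variance contribution is already of order $\vep$, so a bias of order $\sqrt{\vep}$ suffices. That weaker bound is reachable without any recursion: $\opV[u_{k-1}^{\vep}]$ is convex in $y$ (it is $\vep$ times a log-moment generating function), $w_{k\vep}$ is convex with $\nabla^2 w_{k\vep}\preceq A_T^{-1}I$ by \cref{asn:solcon}(i), and the paper's two lemmas give $\lVert\opV[u_{k-1}^{\vep}]-w_{k\vep}-c_\vep\rVert_\infty\le C\vep$ for an explicit additive constant $c_\vep$; the standard interpolation inequality for convex functions ($\lVert\nabla\phi_1-\nabla\phi_2\rVert_\infty\le 2\sqrt{M\lVert\phi_1-\phi_2\rVert_\infty}$ when $\nabla^2\phi_2\preceq MI$) then gives $\lVert T_{k-1}^{\vep}-\nabla w_{k\vep}\rVert_\infty=O(\sqrt{\vep})$, hence bias squared $O(\vep)$. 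Second, your variance term should be handled as in the paper's Steps (b)--(c), i.e.\ by the bounded density-ratio comparison with the surrogate coupling, rather than by the asserted $O(\vep^2)$ Hessian expansion; only the upper bound $\trc\,\mathrm{Cov}(X\mid Y=y)\le C\vep$ is needed. As written, however, the central recursion of your proposal is unsupported by the stated assumptions.
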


\begin{proof}
{\color{black}    
We set up some notation first. Let $(u_{k\vep})_{k\ge 0}$ and $(w_{k\vep}=u^*_{k\vep})_{k\ge 0}$ denote sequences corresponding to the PMA process \eqref{eq:pma} and the dual PMA process \eqref{eq:dualPMA}, restricted to time points $t=0,\vep,2\vep,\ldots $. Throughout this proof, we will write $C>0$ for a generic constant depending on $f$, $g$, $T$, $d$, but crucially not on $\vep$. Note that this constant may change from one line to another.

To establish \eqref{eq:step1show}, we also define
\[
\tilde{\xi}_{k\vep}:=\left( \nabla w_{k\vep},\mathrm{id}\right)_{\#} e^{-g},
\]
which is the law of $(Y^{w_t},Y)$ where $Y \sim e^{-g}$. Therefore $p_X \tilde{\xi}_{k\vep}=\rho_{k\vep}$. Finally, recall the definition of $\gamma^{\vep}_{k+1}(x,y)$ from \eqref{eq:coupling}.  

\vspace{0.1in}

\emph{Proof of \eqref{eq:step1show}.} In this proof, we will establish the following result:

\begin{align}\label{eq:jointcon}
\wass_2^2(\gamma_{k+1}^{\vep},\tilde{\xi}_{k\vep})\le C\vep.
\end{align}
As the $2$-Wasserstein distance between the marginals is smaller, by \eqref{eq:jointcon}, we have $\wass_2^2(p_X \gamma_{k+1}^{\vep}, \ p_X \tilde{\xi}_{k\vep})\le C\vep$. Further, As $p_X \gvp_{k+1}=\rho_k^{\vep}$ (by \eqref{eq:curtaincall}) and $p_X \tilde{\xi}_{k\vep}=\rho_{k\vep}$ (as argued above), the conclusion in \eqref{eq:step1show} will follow. Therefore, it only remains to show \eqref{eq:jointcon}.

\vspace{0.05in}

\emph{Proof of \eqref{eq:jointcon}.} We will break this proof down into $3$ steps. 

\noindent \emph{Step (a).} We will show that 
\begin{align}\label{eq:stepa}
\wass_2^2(\gamma_{k+1}^{\vep},\tilde{\xi}_{k\vep})\le \E_{\gamma_{k+1}^{\vep}} \lVert X-Y^{w_{k\vep}}\rVert^2.
\end{align}

\noindent \emph{Step (b).} The right hand side of \eqref{eq:stepa} is technically challenging to work with as it involves $\gvp_{k+1}$, which in turn involves the Sinkhorn potentials $u_k^{\vep}$'s which are not smooth as $\vep\to 0$. To circumvent this, we define a surrogate probability measure replacing $u_k^{\vep}$'s in the right hand side of \eqref{eq:stepa} with $u_{k\vep}$'s from the PMA. To wit, define 
$$
\xi^{\vep}_{k+1}(x,y):=\exp\left(\frac{1}{\vep}\langle x,y\rangle - \frac{1}{\vep}u_{k\vep}(x)-\frac{1}{\vep}\opV[u_{k\vep}](y)-f(x)-g(y)\right),
$$
We will show in step (b) that: 
\begin{align}\label{eq:stepb}
\E_{\gamma_{k+1}^{\vep}} \lVert X-Y^{w_{k\vep}}\rVert^2\le C\E_{\xi_{k+1}^{\vep}} \lVert X-Y^{w_{k\vep}}\rVert^2.
\end{align}

\noindent \emph{Step (c).} In the final step, we will leverage the smoothness of $(u_{k\vep})$'s from \cref{asn:solcon} to show that 

\begin{align}\label{eq:stepc}
\sup_{k:\ k\vep\le T} \E_{\xi_{k+1}^{\vep}} \lVert X- Y^{w_{k\vep}}\rVert^2\le C\vep.
\end{align}

Clearly, combining \eqref{eq:stepa}, \eqref{eq:stepb} and \eqref{eq:stepc} establishes \eqref{eq:jointcon}, thereby completing the proof. 

\vspace{0.05in}

\emph{Proof of step (a).} Recall that $p_Y \gvp_{k+1}=e^{-g}$ (from \eqref{eq:curtaincall}) and $p_Y \xi_{k+1}^{\vep}=e^{-g}$ (by definition). With this in mind, construct a coupling $\tilde{\pi}_{k+1}^{\vep}$ between $\gamma_{k+1}^{\vep}$ and $\tilde{\xi}_{k\vep}$ as follows: sample $(X,Y)\sim \gamma_{k+1}^{\vep}$ and let $\tilde{\pi}_{k+1}^{\vep}$ be the law of $(X,Y,Y^{w_{k\vep}},Y)$. By definition of $2$-Wasserstein distance (see \eqref{eq:2wass}), we then have:
\begin{align}\label{eq:jointcon1}
    \wass_2^2(\gamma_{k+1}^{\vep},\tilde{\xi}_{k\vep})\le \E_{\tilde{\pi}_{k+1}^{\vep}} \lVert X-Y^{w_{k\vep}}\rVert^2=\E_{\gamma_{k+1}^{\vep}} \lVert X-Y^{w_{k\vep}}\rVert^2.
\end{align}
This establishes step (a).

\vspace{0.05in} 

\emph{Proof of step (b).} For proving step (b), we need the following  preparatory lemma. 

\begin{lmm}\label{lem:erboundmain}
Suppose \cref{asn:solcon} holds. Define $$a_k^{\vep}(x):=\frac{1}{\vep}(u_k^{\vep}-u_{k\vep})(x),\quad b_k^{\vep}(y):=\frac{1}{\vep}(\opV[u_k^{\vep}]-\opV[u_{k\vep}])(y).$$
Then there exists $C>0$ such that 
\begin{align*}
    \sup_{k:\ k\vep\le T} \left(\lVert a_k^{\vep}\rVert_{\infty}+\lVert b_k^{\vep}\rVert_{\infty}\right)\le C.
\end{align*}
\end{lmm}

From the above definitions of $a_k^{\vep}$ and $b_k^{\vep}$, the following relation is immediate:
\begin{align}\label{eq:funrel}
\gamma^{\vep}_{k+1}(x,y)= \xi^{\vep}_{k+1}(x,y) \exp\left(-a_k^\vep(x) - b_k^\vep(y)\right).
\end{align}
An application of \cref{lem:erboundmain} then yields
$$\gamma_{k+1}^{\vep}\le C\xi_{k+1}^{\vep}.$$
This readily implies \eqref{eq:stepb}. We remind the reader here that the constant $C$ is changing from line to line. Therefore, $C$ in the above display is not the same as that in \cref{lem:erboundmain}.

\vspace{0.05in}

\emph{Proof of step (c).} To establish step (c), we need the following lemma. 
\begin{lmm}\label{lem:pmaboundmain}
Suppose \cref{asn:solcon} holds and $\vep \in (0,1)$.  
Then there exists $C>0$ such that 
\begin{align*}
    \sup_{\substack{k:\ k\vep\le T,\\ y\in \R^d}} \left| \opV[u_{k\vep}](y)-w_{k\vep}(y)-\frac{\vep d}{2}\log{(2\pi\vep)}+\vep f(y^{w_{k\vep}})-\frac{\vep}{2}\ldet\left(\frac{\partial y^{w_{k\vep}}}{\partial y\hfill}\right) \right|\le C\vep^2.
\end{align*}
\end{lmm}

The following function (often referred to as the \emph{Bregman divergence function}) will help us simplify notation in the proof of step (c).
\begin{equation}\label{eq:bregdiv}
\mathcal{D}[u_{k\vep}](x|y):=u_{k\vep}(x)+w_{k\vep}(y)-\langle x,y\rangle\ge 0.
\end{equation}

Now the left hand side of \eqref{eq:stepc} (barring the supremum) simplifies as 

\begin{align*}
&\;\;\;\;\E_{\xi_{k+1}^{\vep}} \lVert X-Y^{w_{k\vep}}\rVert^2\nonumber \\ &=\int \int \lVert x-y^{w_{k\vep}}\rVert^2 \exp\left(\frac{1}{\vep}\langle x,y\rangle - \frac{1}{\vep} u_{k\vep}(x)-\frac{1}{\vep}\opV[u_{k\vep}](y)-f(x)-g(y)\right)\,dx\,dy\nonumber \\ &\le C \int \int \frac{1}{(2\pi\vep)^{d/2}}\sqrt{\mathrm{det}\left(\frac{\partial y\hfill}{\partial y^{w_{k\vep}}}\right)}\lVert x-y^{w_{k\vep}}\rVert^2\times \\ \;\;\;\;&\exp\bigg(-\frac{1}{\vep}\mathcal{D}[u_{k\vep}](x|y)+f(y^{w_{k\vep}})-f(x)-g(y)\bigg)\,dx\,dy
\end{align*}
where the last inequality follows from \cref{lem:pmaboundmain}. 

\noindent By a Taylor expansion of $u_{k\vep}(x)$ around the point $y^{w_{k\vep}}$, coupled with \cref{asn:solcon}, (i), we get:
\begin{equation}\label{eq:estimpf1}
\mathcal{D}[u_{k\vep}](x|y)\ge \frac{A_T}{2}\lVert x-y^{w_{k\vep}}\rVert^2.
\end{equation}

Combining the two displays above with \cref{asn:solcon}, part (ii),  it follows that: 

\begin{align*}
&\;\;\;\;\;\E_{\xi_{k+1}^{\vep}} \lVert X-Y^{w_{k\vep}}\rVert^2\\ &\le \frac{C B_T^{\frac{d}{2}}}{(2\pi\vep)^{\frac{d}{2}}} \int \int \lVert x-y^{w_{k\vep}}\rVert^2 \exp\left(-\frac{A_T}{2\vep}\lVert x-y^{w_{k\vep}}\rVert^2+f(y^{w_{k\vep}})-f(x)-g(y)\right)\,dx\,dy.
\end{align*}

Next, we focus on the inner integral above (the one with respect to $x$). We drop the $e^{-g(y)}$ term and adjust some constants, all of which are free of $x$ and reproduce the rest below:

\begin{align*}
    &\;\;\;\;\frac{1}{(2\pi\vep A_T)^{\frac{d}{2}}} \int \lVert x-y^{w_{k\vep}}\rVert^2 \exp\left(-\frac{A_T}{2\vep}\lVert x-y^{w_{k\vep}}\rVert^2+f(y^{w_{k\vep}})-f(x)\right)\,dx\\ &\le \frac{1}{(2\pi\vep A_T)^{\frac{d}{2}}} \int \lVert x-y^{w_{k\vep}}\rVert^2 \exp\left(-\frac{A_T}{2\vep}\lVert x-y^{w_{k\vep}}\rVert^2+\lVert x-y^{w_{k\vep}}\rVert_1 \lVert \nabla f\rVert_{\infty}\right)\,dx\\ &=\E\left[\lVert Z_{\vep}-y^{w_{k\vep}}\rVert^2 \exp\left(\lVert Z_{\vep}-y^{w_{k\vep}}\rVert_1 \lVert \nabla f\rVert_{\infty}\right)\right],
\end{align*}
where $Z_{\vep,y}\sim N(y^{w_{k\vep}},\vep A_T^{-1} \mathrm{I}_d)$. 

As $\lVert Z_{\vep,y}-y^{w_{k\vep}}\rVert$ is distributed according to a $\sqrt{\vep A_T^{-1}}\chi_d$ random variable. By a standard $\chi_d$ tail bound (see example \cite[Lemma 1]{laurent2000adaptive}), it follows that:
$$\E\left[\lVert Z_{\vep}-y^{w_{k\vep}}\rVert^2 \exp\left(\lVert Z_{\vep}-y^{w_{k\vep}}\rVert_1 \lVert \nabla f\rVert_{\infty}\right)\right]\le C\vep.$$
By combining the above observations, we then get:
$$\E_{\xi_{k+1}^{\vep}} \lVert X-Y^{w_{k\vep}}\rVert^2\le C\vep \int \exp(-g(y))\,dy=C\vep.$$
As the above bound is uniform over $k$ such that $k\vep\le T$, the conclusion in \eqref{eq:stepc} follows.
}
\end{proof}

\noindent In other words, \cref{thm:convergence} shows that the marginals from the Sinkhorn algorithm \eqref{eq:sinkupdt} converge to the marginals of the limiting dynamics governed by the Sinkhorn PDE (see \eqref{eq:velocity}). In light of \cref{sec:mirror}, the above result shows the connection between the Sinkhorn algorithm and the Wasserstein mirror flow of entropy (in continuum) with respect to the mirror $U(\cdot)$ which maps $\rho\mapsto \frac{1}{2}\wass_2^2\left(\rho, e^{-g}\right)$ (see \eqref{eq:mirror}).

\begin{remark}
    Recall that $\gamma_{k+1}^{\vep}$ is the Schr\"{o}dinger Bridge coupling between $\mk{k}$ and $e^{-g}$. From the proof of \cref{thm:convergence} (see in particular \eqref{eq:jointcon}), we actually obtain a quantitative convergence bound on this coupling too. In particular, it holds that $\wass_2^2(\gamma_{k+1}^{\vep},(\nabla w_{k\vep},\mathrm{id})_{\#} e^{-g})\le \vep$ uniformly over $1\le k\le \lfloor T/\vep\rfloor$.
\end{remark}

\section{Proof of technical results}\label{sec:pfres}
In this section we will prove the auxiliary results from the main paper whose proofs we deferred, except for Lemmas \ref{lem:erboundmain} and \ref{lem:pmaboundmain} (which will be proved in the next section). 
\begin{proof}[Proof of~\cref{lem:tensorel}]
By~\eqref{eq:conjrel}, the following matrix relationship holds:
\begin{align*}
\left(\secphx\right)\left(\secph\right)=I_d.
\end{align*}
Therefore, by expanding the matrix product, for any $i,j\in [d]^2$, we get:
\begin{align}\label{eq:tensorel1}
\sum_{\ell} \secphxil \cdot \secphlj=\sum_{\ell} \left(\secphx\right)_{i,\ell}\left(\secph\right)_{\ell,j}=\delta_{i,j},
\end{align}
where $\delta_{i,j}=0$ if $i\neq j$ and $\delta_{i,i}=1$. 

\noindent Fix $k\in [d]$. By differentiating the LHS  of~\eqref{eq:tensorel1} with respect to $\xsph_k$, i.e., applying the operator $\frac{\partial}{\partial \xsph_{k}}$, we get:
\begin{align}\label{eq:tensorel2}
\frac{\partial}{\partial \xsph_{k}}\left(\sum_{\ell} \secphxil \cdot \secphlj\right)&=\sum_{\ell} \frac{\partial}{\partial \xsph_{k}} \left(\secphxil\right)\cdot \secphlj+\sum_{\ell} \secphxil \cdot \frac{\partial}{\partial \xsph_k} \left(\secphlj\right)\nonumber \\
&=\sum_{\ell,m} \frac{\partial}{\partial x_{m}}\left(\secphxil\right)\cdot\secphmk \cdot \secphlj+\sum_{\ell} \secphxil \cdot \frac{\partial}{\partial \xsph_k} \left(\secphlj\right)\nonumber \\&=\sum_{\ell,m} \frac{\partial}{\partial x_{\ell}}\left(\secphxim\right)\cdot\secphmk \cdot \secphlj+\sum_{\ell} \secphxil \cdot \frac{\partial}{\partial \xsph_k} \left(\secphjl\right).
\end{align}
Here the first equality uses the product rule of derivatives. The second equality uses the chain rule of derivatives on the first term (the second term is intact here). Finally the third display follows by noting that $\frac{\partial}{\partial x_{m}}\left(\secphxil\right)=\frac{\partial}{\partial x_{\ell}}\left(\secphxim\right)$ and $\frac{\partial x}{\partial x^{\phi}}$ is a symmetric matrix (both of which follow from the $\diffcont^2$ diffeomorphism assumption on $\nabla \phi$). Next note that the derivative of the right hand side of \eqref{eq:tensorel1} with respect to $x_k^{\phi}$ is $0$. With this observation, by  combining~\eqref{eq:tensorel1}~and~\eqref{eq:tensorel2}, we get:
\begin{align*}
\sum_{\ell} \secphxil \cdot \frac{\partial}{\partial \xsph_k} \left(\secphjl\right)=-\sum_{\ell,m} \frac{\partial}{\partial x_{\ell}}\left(\secphxim\right)\cdot\secphmk \cdot \secphlj.
\end{align*}
By choosing $k=i$, summing up over $i$, we get:
\begin{align}\label{eq:tenso1}
\sum_{\ell} \frac{\partial}{\partial x_{\ell}}\left(\secphjl\right)=-\sum_{i,\ell,m} \frac{\partial}{\partial x_{\ell}}\left(\secphxim\right)\cdot\secphmi \cdot \secphlj.
\end{align}

\noindent Note that the left hand side of \eqref{eq:tenso1} is same as the right hand side of \eqref{eq:tensorelpf}. Let us now show that the right hand side of \eqref{eq:tenso1} matches the left hand side of \eqref{eq:tensorelpf}.

\noindent To achieve this, note that for a symmetric positive definite matrix $A$, by \cite[Section A.4.1]{boyd2004convex}, we have $\nabla_A \ldet(A)= A^{-1}$. Here the gradient is computed entry-wise. By using the above observation with $A=\frac{\partial x^{\phi}}{\partial x\hfill}$ (so $A^{-1}=\frac{\partial x\hfill}{\partial x^{\phi}}$), we get
\begin{align*}
\frac{\partial}{\partial \xsph_j} \ldet \left(\secphx\right)&= \sum_{i,m} \frac{\partial}{\partial \xsph_j} \left( \secphxim \right) \secphmi &= \sum_{i,\ell,m} \frac{\partial}{\partial x_{\ell}}\left(\secphxim\right)\cdot\secphlj\cdot\secphmi .
\end{align*}
The last equality above follows using the chain rule of derivatives. Coupling the above observation with \eqref{eq:tenso1}  completes the proof.
\end{proof}

\begin{proof}[Proof of \cref{lem:dualPMA}]
As $w_t=u_t^*$ with $u_t$ being the solution from \eqref{eq:pma}, given $y\in\R^d$, the following holds:
$$w_t(y)+u_t(y^{w_t})-\langle y, y^{w_t}\rangle=0.$$
By taking partial derivatives with respect to $t$ on both sides above, we further have:
\begin{align}\label{eq:dualPM1}
    \frac{\partial w_t}{\partial t\hfill}(y)+\frac{\partial}{\partial t}(u_t(y^{w_t}))-\iprod{y,\frac{\partial y^{w_t}}{\partial t\hfill}}=0
\end{align}
The second term in the left hand side of \eqref{eq:dualPM1} needs further simplification. To wit, by using the chain rule of derivatives, we get:
$$\frac{\partial}{\partial t}(u_t(y^{w_t}))=\frac{\partial}{\partial t}u_t(y^{w_t})+\iprod{(y^{w_t})^{u_t},\frac{\partial y^{w_t}}{\partial t\hfill}}=\frac{\partial}{\partial t}u_t(y^{w_t})+\iprod{y,\frac{\partial y^{w_t}}{\partial t\hfill}}.$$
In the last equality above, we have used the observation that $\nabla u_t(\nabla w_t(y))=y$. By coupling the above observation with \eqref{eq:dualPM1} we get
$$\frac{\partial w_t}{\partial t\hfill}(y)=-\frac{\partial}{\partial t}u_t(y^{w_t})=g((y^{w_t})^{u_t})-f(y^{w_t})-\ldet\left(\frac{\partial x^{u_t}}{\partial x\hfill}\right)\bigg|_{x=y^{w_t}}.$$
In the last equality we have used the PMA \eqref{eq:pma}. The conclusion of the lemma now follows by combining the above display with the following observations:
$$(y^{w_t})^{u_t}=y,\quad\quad \ldet\left(\frac{\partial x^{u_t}}{\partial x\hfill}\right)\bigg|_{x=y^{w_t}}=-\ldet\left(\frac{\partial y^{w_t}}{\partial y\hfill}\right).$$
\end{proof}

\begin{proof}[Proof of  \cref{lem:convexcall}]
By \eqref{eq:labelgrad}, $\nabla w_t(y)+\delta v_t(y^{w_t})$ is the gradient of the following function with respect to $y$
$$\Lambda_t(y):=w_t(y)+\delta \left(g(y)-f(y^{w_t})+\ldet\left(\frac{\partial y^{w_t}}{\partial y\hfill}\right)\right).$$
It remains to prove that $y\mapsto \Lambda_t(y)$ is convex. Towards this direction, we first assume the following claim and complete the proof.
\begin{align}\label{eq:eigbound}
    \sup_y \lVert \nabla^2 \Lambda_t(y)\rVert_{\mathrm{op}}<\infty,
\end{align}
where $\lVert \cdot \rVert_{\mathrm{op}}$ denotes the $L^2$ operator norm of a matrix (see~\cref{tab:table3}). Recall from \cref{asn:solcon}, part (i) that $\inf_y \lmn\left(\frac{\partial y^{w_t}}{\partial y\hfill}\right)>0$. Therefore there exists $\delta>0$ such that $$\delta < \ \frac{\inf_y \lmn\left(\frac{\partial y^{w_t}}{\partial y\hfill}\right)}{2\sup_y \lVert \nabla^2 \Lambda_t(y)\rVert_{\mathrm{op}}}.$$ 
By using Weyl's inequality (see \cite[Theorem 3.3.16]{horn1994topics}), we then get:
$$\inf_y \lmn(\Lambda_t(y))\ge \inf_y \lmn\left(\frac{\partial y^{w_t}}{\partial y\hfill}\right)-\delta \sup_y\lVert \Lambda_t(y)\rVert_{\mathrm{op}}>\frac{1}{2}\inf_y \lmn\left(\frac{\partial y^{w_t}}{\partial y\hfill}\right)>0.$$

This establishes the convexity of $y\mapsto \Lambda_t(y)$ and  completes the proof of \cref{lem:convexcall}. Therefore, it only remains to prove \eqref{eq:eigbound}. 

\emph{Proof of \eqref{eq:eigbound}.} By \cref{asn:solcon}, parts (i) and (iii), $\sup_y \lVert w_t(y)\rVert_{\mathrm{op}}<\infty$ and $\sup_y \lVert \nabla^2 g(y)\rVert_{\mathrm{op}}<\infty$. Similarly for $i,j\in [d]^2$, we get 
$$\frac{\partial^2}{\partial y_i\partial y_j}(f(y^{w_t}))=\sum_{\ell}\frac{\partial^3}{\partial y_i \partial y_j \partial y_{\ell}} w_t(y) \frac{\partial}{\partial y_{\ell}} f(y^{w_t})+\sum_{\ell,m}\left(\frac{\partial y_{\ell}^{w_t}}{\partial y_j\hfill}\right)\left(\frac{\partial y_m^{w_t}}{\partial y_i\hfill}\right)\frac{\partial^2}{\partial y_m\partial y_{\ell}}f(y^{w_t}).$$
Here we have used a combination of the product rule and the chain rule. Similar calculations were done in the proofs of \cref{thm:existlin} in the main paper, and so we skip the details for brevity. By the uniform boundedness of the first two derivatives of $f$ and the first $3$ derivatives of $w_t$ from \cref{asn:solcon}, part (iii) (also see \cref{rem:dualasn}), we immediately get:
$$\max_{i,j\in [d]^2}\sup_{y}\bigg|\frac{\partial^2}{\partial y_i\partial y_j}(f(y^{w_t}))\bigg|<\infty.$$
A similar computation also yields
\begin{align*}
    \frac{\partial^2}{\partial y_i \partial y_j}\left(\ldet\left(\frac{\partial y^{w_t}}{\partial y\hfill}\right)\right)&=\sum_{k,\ell}\left(\frac{\partial y_k\hfill}{\partial y_{\ell}^{w_t}}\right)\frac{\partial^4}{\partial y_i\partial y_j\partial y_k\partial y_{\ell}} w_t(y)\\ &\;\;\;+\sum_{k,\ell,m}\frac{\partial^3}{\partial y_k\partial y_{\ell}\partial y_m}u_t(y^{w_t})\frac{\partial^3}{\partial y_k\partial y_{\ell}\partial y_j}w_t(y)\left(\frac{\partial y_m^{w_t}}{\partial y_i}\right).
\end{align*}
By the uniform boundedness of the first two derivatives of $f$ and the first $4$ derivatives of $w_t$ from \cref{asn:solcon}, part (iii) (also see \cref{rem:dualasn}), we immediately get:
$$\max_{i,j\in [d]^2}\sup_y \Bigg|\frac{\partial^2}{\partial y_i \partial y_j}\left(\ldet\left(\frac{\partial y^{w_t}}{\partial y\hfill}\right)\right)\Bigg|<\infty.$$
Combining these observations, \eqref{eq:eigbound} follows.
\end{proof}


\section{Proof of Lemmas \ref{lem:erboundmain} and \ref{lem:pmaboundmain}}\label{sec:mainresultlems}
Recall the definitions of $\opU$ and $\opV$ from \eqref{eq:basedef}. 
\cref{lem:pmaboundmain} is a Laplace approximation applied to the integral operator $\opV$. A similar estimate can be found in \cite[Lemma 4.2]{berman2020} under different assumptions. On the other hand, \cref{lem:erboundmain} is a triangular approximation argument which is motivated from the proof of \cite[Lemma 4.4]{berman2020}. We will actually prove Lemmas \ref{lem:erboundmain} and \ref{lem:pmaboundmain} in the reverse order. We need to set up some notation first. For $k\ge 0$, consider the Bregman divergence from \eqref{eq:bregdiv} for the convex function $u_{k\vep}(\cdot)$ and $x,y\in\R^d$, 
\[
\mcD[u_{k\vep}](x|y) = u_{k\vep}(x) + u_{k\vep}^*(y) - \langle x,y\rangle.
\]
Note that $\mcD[u_{k\vep}^*](y|x)=\mcD[u_{k\vep}](x|y)$. Also given a sufficiently smooth function $h:\R^d\to\R$ and any $r\in \mathbb{N}$, define 
    \begin{align}\label{eq:taylornot}
        T[h:r](x|y):=\sum_{|\alpha|=r}\frac{D^{\alpha} h(y)}{\alpha!}(x-y)^{\alpha},
    \end{align}
    and 
    $$R[h:r](x|y):=\sum_{|\beta|=r}\frac{r}{\beta!}\left(\int_0^1 (1-t)^{r-1} D^{\beta} h(y+t(x-y))\,dt\right)(x-y)^{\beta}-T[h:r](x|y).$$
    Here, the $D^{\alpha}$ (or $D^{\beta}$) operators denote the standard multivariate differential operators given a nonnegative multi-index $\alpha=(\alpha_1,\ldots ,\alpha_d)$ (respectively $\beta=(\beta_1,\ldots ,\beta_d)$). Also $|\alpha|=\sum_{i=1}^d \alpha_i$, $|\beta|=\sum_{i=1}^d \beta_i$ and, as usual, $(x-y)^\alpha=\prod_{i=1}^d (x_i-y_i)^{\alpha_i}$ and so on.
    
    In other words, $T[h:r](x|y)$ denotes the $r$-th polynomial in the Taylor series expansion of $h$ at the point $x$ around the point $y$. On the other hand, $R[h:r](x|y)$ denotes the corresponding remainder term after a Taylor expansion of the function $h$ up to the $r$-th order term (at the point $x$  around the point $y$).

    Finally, we note the following canonical estimates:
    \begin{equation}\label{eq:gradbound3}
    |T[h:r](x|y)|\le  C_r\lVert \nabla^r h\rVert_{\infty}\lVert x-y\rVert^{r},
    \end{equation}
    and
    \begin{equation}\label{eq:gbd4}
    |R[h:r](x|y)|\le C_r \lVert x-y\rVert^r \sup_{\lVert z_1-z_2\lVert \le \lVert x-y\rVert}\lVert \nabla^r h(z_1)-\nabla^r h(z_2)\rVert_{\infty},
    \end{equation}
    where $C_r>1$ is a universal constant (depending on $d$) that is free of $h$, $x$, and $y$. Here $\lVert \nabla^r h\rVert_{\infty}$ denotes the maximum of the supremum norms of every component function in the $r$-th order multiderivative.

\begin{proof}[Proof of \cref{lem:pmaboundmain}]
Throughout the proof we always assume $k\vep\le T$. Further, $C$ will denote constants (possibly different in various steps) depending only on $d$ and all the other constants implicit in \cref{asn:solcon}. 

Let $x,y\in\R^d$. Recall from \eqref{eq:estimpf1} that 
$\mathcal{D}[u_{k\vep}](x|y)\ge \frac{A_T}{2}\lVert x-y^{w_{k\vep}}\rVert^2$, for some positive constant $A_T$. We are now in a position to simplify $\opV$. 
    
    \vspace{0.1in}
    
    By an algebraic identity using the definition of $\opV$ from \eqref{eq:basedef}, we observe that 
    \begin{align}\label{eq:simpl1}
       &\;\;\;\;\frac{\sqrt{\mathrm{det}(\nabla^2 u_{k\vep}(y^{w_{k\vep}}))}}{(2\pi\vep)^{\frac{d}{2}}}\exp\left(\frac{1}{\vep}\opV[u_{k\vep}](y)-\frac{1}{\vep}w_{k\vep}(y)+f(y^{w_{k\vep}})\right)\nonumber \\ &=\frac{\sqrt{\mathrm{det}(\nabla^2 u_{k\vep}(y^{w_{k\vep}}))}}{(2\pi\vep)^{\frac{d}{2}}}\int \exp\bigg(-\frac{1}{\vep}\mcD[u_{k\vep}](x|y)-f(x)+f(y^{w_{k\vep}})\bigg)\,dx.
    \end{align}
    Define 
    $$r_{\vep}:=\sqrt{-40d A_T^{-1} \vep\log{\vep}},\qquad \mbox{for}\ \vep\in (0,1/2).$$
    We now split the integral in \eqref{eq:simpl1} into two complementary domains: the first one is an integral over $B_{r_{\vep}}(y^{w_{k\vep}})$ and the second one is over $B^c_{r_{\vep}}(y^{w_{k\vep}})$. We will show that 
    \begin{equation}\label{eq:showsmall}
        \sup_y \frac{\sqrt{\mathrm{det}(\nabla^2 u_{k\vep}(y^{w_{k\vep}}))}}{(2\pi\vep)^{\frac{d}{2}}}\int\limits_{B^c_{r_{\vep}}(y^{w_{k\vep}})} \exp\bigg(-\frac{1}{\vep}\mcD[u_{k\vep}](x|y)-f(x)+f(y^{w_{k\vep}})\bigg)\,dx \le C \vep^{10},
    \end{equation}
    and 
    \begin{equation}\label{eq:showlarge}
        \sup_y \bigg|\frac{\sqrt{\mathrm{det}(\nabla^2 u_{k\vep}(y^{w_{k\vep}}))}}{(2\pi\vep)^{\frac{d}{2}}}\int\limits_{B_{r_{\vep}}(y^{w_{k\vep}})} \exp\bigg(-\frac{1}{\vep}\mcD[u_{k\vep}](x|y)-f(x)+f(y^{w_{k\vep}})\bigg)\,dx - 1\bigg|\le C \vep.
    \end{equation}
    Let us complete the proof by assuming \eqref{eq:showsmall} and \eqref{eq:showlarge} first. To wit, by combining \eqref{eq:showsmall} and \eqref{eq:showlarge}, with \eqref{eq:simpl1}, we get:
    $$\sup_y \bigg|\frac{\sqrt{\mathrm{det}(\nabla^2 u_{k\vep}(y^{w_{k\vep}}))}}{(2\pi\vep)^{\frac{d}{2}}}\exp\left(\frac{1}{\vep}\opV[u_{k\vep}](y)-\frac{1}{\vep}w_{k\vep}(y)+f(y^{w_{k\vep}})\right)-1\bigg|\le C \vep.$$
    The conclusion of \cref{lem:pmaboundmain} then follows by using the elementary inequality $|\log{(1+x)}|\le 2|x|$ for $|x|\le 1/2$. It only remains to show \eqref{eq:showsmall} and \eqref{eq:showlarge}.
    
    \vspace{0.1in}
    
    \emph{Proof of \eqref{eq:showsmall}.} We note the following sequence of displays with line-by-line explanations to follow:
    \begin{align}\label{eq:gradbound4}
        &\;\;\;\;\frac{\sqrt{\mathrm{det}(\nabla^2 u_{k\vep}(y^{w_{k\vep}}))}}{(2\pi\vep)^{\frac{d}{2}}}\int\limits_{B^c_{r_{\vep}}(y^{w_{k\vep}})} \exp\bigg(-\frac{1}{\vep}\mcD[u](x|y)-f(x)+f(y^{w_{k\vep}})\bigg)\,dx\nonumber \\ & \le \frac{C}{(2\pi\vep)^{\frac{d}{2}}}\int\limits_{B^c_{r_{\vep}}(y^{w_{k\vep}})} \exp\bigg(-\frac{A_T}{2\vep }\lVert x-y^{w_{k\vep}}\rVert^2-f(x)+f(y^{w_{k\vep}})\bigg)\,dx\nonumber \\ &= C\E\left[\exp\left(f(y^{w_{k\vep}})-f\left(y^{w_{k\vep}}+\sqrt{\vep A_T^{-1}} Z\right)\right)\mathbf{1}\left(\sqrt{\vep A_T^{-1}}Z\in B_{r_{\vep}}^c(0)\right)\right]\nonumber \\ &\le C  \E\left[\exp\left(\sqrt{\vep A_T^{-1}}\lVert \nabla f\rVert_{\infty}\sum_{i=1}^d |Z_i|\right)\mathbf{1}\left(\sqrt{\vep A_T^{-1}}Z\in B_{r_{\vep}}^c(0)\right)\right]
    \end{align}
    Above, in the first inequality, we have used  \eqref{eq:estimpf1} and \cref{asn:solcon}, part (i) to bound $\sqrt{\mathrm{det}(\nabla^2 u_{k\vep}(y^{w_{k\vep}}))}$ from above. In the following equality, we have adjusted some constants to rewrite the integral in terms of an expectation of a standard multivariate Gaussian random variable $Z$. The next inequality follows from the elementary observation that 
    \begin{align*}
    \left|f\left(y^{w_{k\vep}}+\sqrt{\vep A_T^{-1}}Z\right)-f(y^{w_{k\vep}})\right|&=\int_0^1 \left\langle \nabla f\left(y^{w_{k\vep}}+t\sqrt{\vep A_T^{-1}}Z\right),\sqrt{\vep A_T^{-1}}Z\right\rangle\,dt\\ &\le \sqrt{\vep A_T^{-1}}\lVert \nabla f\rVert_{\infty}\sum_{i=1}^d |Z_i|.
    \end{align*}
    Next, we apply the Cauchy-Schwartz inequality in \eqref{eq:gradbound4} to get:
    \begin{align*}
    &\;\;\;\;\frac{\sqrt{\mathrm{det}(\nabla^2 u_{k\vep}(y^{w_{k\vep}}))}}{(2\pi\vep)^{\frac{d}{2}}}\int\limits_{B^c_{r_{\vep}}(y^{w_{k\vep}})} \exp\bigg(-\frac{1}{\vep}\mcD[u](x|y)-f(x)+f(y^{w_{k\vep}})\bigg)\,dx\\ &\le
    C \sqrt{\E\exp\left(\sqrt{\vep A_T^{-1}}\lVert \nabla f\rVert_{\infty} \sum_{i=1}^d |Z_i|\right)}\sqrt{\Pr(\sqrt{\vep A_T^{-1}}\lVert Z\rVert\ge r_{\vep})}\\ &\le C \sqrt{\left(\E\exp\left(\sqrt{\vep A_T^{-1}}\lVert \nabla f\rVert_{\infty}  |Z_1|\right)\right)^d}\sqrt{d\Pr\left(|Z_1|\ge r_{\vep}/\sqrt{\vep d A_T^{-1}}\right)}\le C \vep^{10}.
    \end{align*}
    To understand the inequalities in the last line above, note that by using standard Gaussian tail bounds, we have 
    $$\E\exp\left(\sqrt{\vep A_T^{-1}}\lVert \nabla f\rVert_{\infty}  |Z_1|\right)\le \E\exp\left(\sqrt{A_T^{-1}}\lVert \nabla f\rVert_{\infty}|Z_1|\right)\le C,$$
    as $\vep\le 1$. Moreover, by the union bound,
     \begin{align*}
        \Pr&(\sqrt{\vep A_T^{-1}}\lVert Z\rVert\ge r_{\vep})\le \sum_{i=1}^d \Pr\left(|Z_i|\ge r_{\vep}/\sqrt{\vep d A_T^{-1}}\right)=d \Pr\left(|Z_1|\ge r_{\vep}/\sqrt{\vep d A_T^{-1}}\right)\\ &\le 2d \exp(-r_{\vep}^2/(2\vep d A_T^{-1}))=2d \exp(20 \log{\vep})=2d \vep^{20}.
    \end{align*}
\vspace{0.1in}

   \emph{Proof of \eqref{eq:showlarge}.} 
    Set $$\tilde{Z}^{(\vep)}_{w_{k\vep},y}\sim y^{w_{k\vep}}+\sqrt{\vep}Z_{w_{k\vep},y}, \quad \mbox{where} \quad Z_{w_{k\vep},y}\sim N(0,\nabla^2 w_{k\vep}(y)).$$
    By a third order Taylor series expansion, for any $k\ge 0$:
    \begin{small}
    \begin{align}\label{eq:taylor1}
        &\mcD[u_{k\vep}](x|y)=\frac{1}{2}(x-y^{w_{k\vep}})^{\top}\nabla^2 u_{k\vep}(y^{w_{k\vep}})(x-y^{w_{k\vep}}) + T[u_{k\vep}:3](x|y^{w_{k\vep}}) + R[u_{k\vep}:3](x|y^{w_{k\vep}}).
    \end{align}
    \end{small}
    Also, by a first order Taylor expansion to the function $f$:
    \begin{align}\label{eq:taylor2}
       f(x)&=f(y^{w_{k\vep}})+T[f:1](x|y^{w_{k\vep}})+R[f:1](x|y^{w_{k\vep}}).
    \end{align}
    By \eqref{eq:taylor1} and \eqref{eq:taylor2},
    \begin{align}\label{eq:largge}
        &\;\;\;\;\frac{\sqrt{\mathrm{det}(\nabla^2 u_{k\vep}(y^{w_{k\vep}}))}}{(2\pi\vep)^{\frac{d}{2}}}\int\limits_{B_{r_{\vep}}(y^{w_{k\vep}})} \exp\left(-\frac{1}{\vep}\mcD[u_{k\vep}](x|y)-f(x)+f(y^{w_{k\vep}})\right)\,dx \nonumber \\ &=\frac{\sqrt{\mathrm{det}(\nabla^2 u_{k\vep}(y^{w_{k\vep}}))}}{(2\pi\vep)^{\frac{d}{2}}}\int\limits_{B_{r_{\vep}}(y^{w_{k\vep}})} \exp\bigg(-\frac{1}{2\vep}(x-y^{w_{k\vep}})^{\top}\nabla^2 u_{k\vep}(y^{w_{k\vep}})(x-y^{w_{k\vep}})\bigg)\times \nonumber \\ 
        &  \exp\left(-\frac{1}{\vep}T[u_{k\vep}:3](x|y^{w_{k\vep}})- \frac{1}{\vep} R[u_{k\vep}:3](x|y^{w_{k\vep}})-T[f:1](x|y^{w_{k\vep}})-R[f:1](x|y^{w_{k\vep}})\right)\,dx.
    \end{align}

    Now in order to prove \eqref{eq:showlarge}, it suffices to show that the final equality above is $1+O(\vep)$. Let us sketch the rest of the argument first. Note that in the second line above, we have the density of a $N(y^{w_{k\vep}},\vep\nabla^2 w_{k\vep}(y))$ random variable, i.e., $\tilde{Z}^{(\vep)}_{w_{k\vep},y}$ declared above. This enables us to rewrite the above integral as a Gaussian integral as follows:

    \begin{align}\label{eq:large1}
    &\E\bigg[\exp\bigg(-\frac{1}{\vep}T[u_{k\vep}:3](\tilde{Z}^{(\vep)}_{w_{k\vep},y}|y^{w_{k\vep}})- \frac{1}{\vep} R[u_{k\vep}:3](\tilde{Z}^{(\vep)}_{w_{k\vep},y}|y^{w_{k\vep}})-T[f:1](\tilde{Z}^{(\vep)}_{w_{k\vep},y}|y^{w_{k\vep}})\nonumber \\ &-R[f:1](\tilde{Z}^{(\vep)}_{w_{k\vep},y}|y^{w_{k\vep}})\bigg)\mathbf{1}(\sqrt{\vep}Z_{w_{k\vep},y}\in B_{r_{\vep}}(0))\bigg].
    \end{align}

    Next we will use two simple approximations of the exponential function as follows: for $|z|\le M$, 
    \begin{align}\label{eq:taylorapp}
        \bigg|\exp(z)-1-z\bigg|\le \frac{z^2}{2}\exp(M),\qquad \mbox{and} \qquad \bigg|\exp(z)-1\bigg|\le |z|\exp(M).
    \end{align}
    The idea is to use \eqref{eq:taylorapp} to approximate the exponential terms in \eqref{eq:large1}. In particular, the terms in \eqref{eq:large1} featuring the Taylor polynomials in the exponent, namely $\exp\big(-\frac{1}{\vep}T[u_{k\vep}:3](\tilde{Z}^{(\vep)}_{w_{k\vep},y}|y^{w_{k\vep}})\big)$ and $\exp\big(-T[f:1](\tilde{Z}^{(\vep)}_{w_{k\vep},y}|y^{w_{k\vep}})\big)$, will be approximated using the first inequality in \eqref{eq:taylorapp}. The remainder terms $\exp\big(\frac{1}{\vep} R[u_{k\vep}:3](\tilde{Z}^{(\vep)}_{w_{k\vep},y}|y^{w_{k\vep}})\big)$ and $\exp\big(-R[f:1](\tilde{Z}^{(\vep)}_{w_{k\vep},y}|y^{w_{k\vep}})\big)$ on the other hand, will be approximated using the second inequality in \eqref{eq:taylorapp}. The role of $M$ in \eqref{eq:taylorapp} will be played by an appropriate function of $r_{\vep}$ thanks to the indicator term in \eqref{eq:large1}. Let us illustrate the above description concretely using one of the aforementioned terms. To wit, note that by \eqref{eq:gradbound3}, we have:
    \begin{align}\label{eq:large2}
    &\;\;\;\;\bigg|\frac{1}{\vep}T[u_{k\vep}:3](\tilde{Z}^{(\vep)}_{w_{k\vep},y}|y^{w_{k\vep}})\mathbf{1}(\sqrt{\vep}Z_{w_{k\vep},y}\in B_{r_{\vep}}(0))\bigg|\nonumber \\ &\le \frac{C}{\vep} \lVert \sqrt{\vep}Z_{w_{k\vep},y}\rVert^3 \mathbf{1}(\sqrt{\vep}Z_{w_{k\vep},y}\in B_{r_{\vep}}(0))\le \frac{C}{\vep} r_{\vep}^3 = C\sqrt{\vep}\left(\log{\left(\frac{1}{\vep}\right)}\right)^{3/2}.
    \end{align}
    Define
    \begin{align*}
    \vartheta^{(1)}_{\vep}(\tilde{Z}^{(\vep)}_{w_{k\vep},y}|y^{w_{k\vep}})&:=\exp\left(-\frac{1}{\vep}T[u_{k\vep}:3](\tilde{Z}^{(\vep)}_{w_{k\vep},y}|y^{w_{k\vep}})\right)-1+\frac{1}{\vep}T[u_{k\vep}:3](\tilde{Z}^{(\vep)}_{w_{k\vep},y}|y^{w_{k\vep}}).
    \end{align*}
    By invoking the first inequality in \eqref{eq:taylorapp} with $M=C\sqrt{\vep}\left(\log{\left(\frac{1}{\vep}\right)}\right)^{3/2}$ as in \eqref{eq:large2}, we get:
    \begin{align}\label{eq:large11}
    &\;\;\;\;\sup_y \big|\vartheta^{(1)}_{\vep}(\tilde{Z}^{(\vep)}_{w_{k\vep},y}|y^{w_{k\vep}})\big|\mathbf{1}(\sqrt{\vep}Z_{w_{k\vep},y}\in B_{r_{\vep}}(0))\nonumber \\ &\le \sup_y \left(\frac{1}{\vep}T[u_{k\vep}:3](\tilde{Z}^{(\vep)}_{w_{k\vep},y}|y^{w_{k\vep}})\right)^2 \mathbf{1}(\sqrt{\vep}Z_{w_{k\vep},y}\in B_{r_{\vep}}(0)) \exp\left(C\sqrt{\vep}\left(\log{\left(\frac{1}{\vep}\right)}\right)^{3/2}\right)\nonumber \\&\le C \vep \left(\log{\left(\frac{1}{\vep}\right)}\right)^3.
    \end{align}
    In the last inequality we have bounded the term $\exp(C\sqrt{\vep}(\log{(1/\vep)})^{3/2})$ by a constant using $\vep\in (0,1)$. In the same vein, 
    \begin{align}\label{eq:large12}
    &\;\;\;\;\sup_y \E\left[\big|\vartheta^{(1)}_{\vep}(\tilde{Z}^{(\vep)}_{w_{k\vep},y}|y^{w_{k\vep}})\big|\mathbf{1}(\sqrt{\vep}Z_{w_{k\vep},y}\in B_{r_{\vep}}(0))\right]\nonumber \\ &\le \sup_y \E\left[\left(\frac{1}{\vep}T[u_{k\vep}:3](\tilde{Z}^{(\vep)}_{w_{k\vep},y}|y^{w_{k\vep}})\right)^2 \mathbf{1}(\sqrt{\vep}Z_{w_{k\vep},y}\in B_{r_{\vep}}(0))\right] \exp\left(C\sqrt{\vep}\left(\log{\left(\frac{1}{\vep}\right)}\right)^{3/2}\right)\nonumber \\&\le  C \sup_y \E\left[\left(\frac{1}{\vep}T[u_{k\vep}:3](\tilde{Z}^{(\vep)}_{w_{k\vep},y}|y^{w_{k\vep}})\right)^2 \right]\le C \vep .
    \end{align}
    In the last display, we have additionally used standard Gaussian tail bounds.

    \vspace{0.1in}

    We can carry out the same line of argument for the other terms in the exponential as seen in \eqref{eq:large1}. We simply produce the corresponding definitions and bounds noting that they can be obtained similarly as the bounds for $\vartheta^{(1)}_{\vep}(\tilde{Z}^{(\vep)}_{w_{k\vep},y}|y^{w_{k\vep}})$ above.

    Define 
    \begin{align*}
    \vartheta^{(2)}_{\vep}(\tilde{Z}^{(\vep)}_{w_{k\vep},y}|y^{w_{k\vep}})&:=\exp\left(- \frac{1}{\vep} R[u_{k\vep}:3](\tilde{Z}^{(\vep)}_{w_{k\vep},y}|y^{w_{k\vep}})\right)-1.
    \end{align*}
    It satisfies 
    \begin{equation}\label{eq:large21}
        \sup_y \big|\vartheta^{(2)}_{\vep}(\tilde{Z}^{(\vep)}_{w_{k\vep},y}|y^{w_{k\vep}})\big|\mathbf{1}(\sqrt{\vep}Z_{w_{k\vep},y}\in B_{r_{\vep}}(0))\le C \vep \left(\log{\left(\frac{1}{\vep}\right)}\right)^2,
    \end{equation}
    and
    \begin{equation}\label{eq:large22}
        \sup_y \E\left[\big|\vartheta^{(2)}_{\vep}(\tilde{Z}^{(\vep)}_{w_{k\vep},y}|y^{w_{k\vep}})\big|\mathbf{1}(\sqrt{\vep}Z_{w_{k\vep},y}\in B_{r_{\vep}}(0))\right]\le C \vep.
    \end{equation}
    Define 
    \begin{align*}
    \vartheta^{(3)}_{\vep}(\tilde{Z}^{(\vep)}_{w_{k\vep},y}|y^{w_{k\vep}})&:=\exp\left(- T[f:1](\tilde{Z}^{(\vep)}_{w_{k\vep},y}|y^{w_{k\vep}})\right)-1+T[f:1](\tilde{Z}^{(\vep)}_{w_{k\vep},y}|y^{w_{k\vep}}).
    \end{align*}
    It satisfies 
    \begin{equation}\label{eq:large31}
        \sup_y \big|\vartheta^{(3)}_{\vep}(\tilde{Z}^{(\vep)}_{w_{k\vep},y}|y^{w_{k\vep}})\big|\mathbf{1}(\sqrt{\vep}Z_{w_{k\vep},y}\in B_{r_{\vep}}(0))\le C \vep \log{\left(\frac{1}{\vep}\right)},
    \end{equation}
    and
    \begin{equation}\label{eq:large32}
        \sup_y \E\left[\big|\vartheta^{(3)}_{\vep}(\tilde{Z}^{(\vep)}_{w_{k\vep},y}|y^{w_{k\vep}})\big|\mathbf{1}(\sqrt{\vep}Z_{w_{k\vep},y}\in B_{r_{\vep}}(0))\right]\le C \vep.
    \end{equation}
    Finally, define 
    \begin{align*}
    \vartheta^{(4)}_{\vep}(\tilde{Z}^{(\vep)}_{w_{k\vep},y}|y^{w_{k\vep}})&:=\exp\left(-  R[f:1](\tilde{Z}^{(\vep)}_{w_{k\vep},y}|y^{w_{k\vep}})\right)-1.
    \end{align*}
    It satisfies 
    \begin{equation}\label{eq:large41}
        \sup_y \big|\vartheta^{(4)}_{\vep}(\tilde{Z}^{(\vep)}_{w_{k\vep},y}|y^{w_{k\vep}})\big|\mathbf{1}(\sqrt{\vep}Z_{w_{k\vep},y}\in B_{r_{\vep}}(0))\le C \vep \log{\left(\frac{1}{\vep}\right)},
    \end{equation}
    and
    \begin{equation}\label{eq:large42}
        \sup_y \E\left[\big|\vartheta^{(4)}_{\vep}(\tilde{Z}^{(\vep)}_{w_{k\vep},y}|y^{w_{k\vep}})\big|\mathbf{1}(\sqrt{\vep}Z_{w_{k\vep},y}\in B_{r_{\vep}}(0))\right]\le C \vep.
    \end{equation}
    It is worth noting that the bounds \eqref{eq:large31} and \eqref{eq:large32} (which correspond to approximating terms involving Taylor polynomials) will require \eqref{eq:gradbound3} in the same way as the bounds \eqref{eq:large11} and \eqref{eq:large12} (which also correspond to approximating terms involving Taylor polynomials). On the other hand, the same step will require \eqref{eq:gbd4} while obtaining the bounds \eqref{eq:large21}, \eqref{eq:large22}, \eqref{eq:large41}, and \eqref{eq:large42} (which correspond to approximating remainder terms after suitable Taylor series approximations).

    We will now use the above bounds in conjunction with \eqref{eq:largge} and \eqref{eq:large1}. First we use the definitions of $\vartheta^{(i)}_{\vep}(\tilde{Z}^{(\vep)}_{w_{k\vep},y}|y^{w_{k\vep}})$, $i=1,2,3,4$, in \eqref{eq:largge} and \eqref{eq:large1} to note that:

    \begin{align}\label{eq:gaussbreak}
        &\;\;\;\;\frac{\sqrt{\mathrm{det}(\nabla^2 u_{k\vep}(y^{w_{k\vep}}))}}{(2\pi\vep)^{\frac{d}{2}}}\int\limits_{B_{r_{\vep}}(y^{w_{k\vep}})} \exp\left(-\frac{1}{\vep}\mcD[u_{k\vep}](x|y)-f(x)+f(y^{w_{k\vep}})\right)\,dx\nonumber \\ 
        &=\E\bigg[\left(1-\frac{1}{\vep}T[u_{k\vep}:3](\tilde{Z}^{(\vep)}_{w_{k\vep},y}|y^{w_{k\vep}})+\vartheta^{(1)}_{\vep}(\tilde{Z}^{(\vep)}_{w_{k\vep},y}|y^{w_{k\vep}})\right)\left(1+\vartheta^{(2)}_{\vep}(\tilde{Z}^{(\vep)}_{w_{k\vep},y}|y^{w_{k\vep}})\right)\nonumber \\ &\qquad \left(1-T[f:1](\tilde{Z}^{(\vep)}_{w_{k\vep},y}|y^{w_{k\vep}})+\vartheta^{(3)}_{\vep}(\tilde{Z}^{(\vep)}_{w_{k\vep},y}|y^{w_{k\vep}})\right)\left(1+\vartheta^{(4)}_{\vep}(\tilde{Z}^{(\vep)}_{w_{k\vep},y}|y^{w_{k\vep}})\right)\nonumber \\ &\qquad \qquad \mathbf{1}(\sqrt{\vep}Z_{w_{k\vep},y}\in B_{r_{\vep}}(0))\bigg].
    \end{align}

We will now expand out all the brackets above. Let us first try to isolate the terms which are $o(\vep)$. To wit, note that if a term has a product of at least two of $\vartheta_{\vep}^{(i)}(\tilde{Z}^{(\vep)}_{w_{k\vep},y}|y^{w_{k\vep}})$ and $\vartheta^{(j)}_{\vep}(\tilde{Z}^{(\vep)}_{w_{k\vep},y}|y^{w_{k\vep}})$, then it is $o(\vep)$. 
This is because by \eqref{eq:large11}, \eqref{eq:large21}, \eqref{eq:large31}, and \eqref{eq:large41}, for $i\neq j$, we have \begin{align*}&\;\;\;\;\sup_y \big|\vartheta_{\vep}^{(i)}(\tilde{Z}^{(\vep)}_{w_{k\vep},y}|y^{w_{k\vep}})\vartheta^{(j)}_{\vep}(\tilde{Z}^{(\vep)}_{w_{k\vep},y}|y^{w_{k\vep}})\mathbf{1}(\sqrt{\vep}Z_{w_{k\vep},y}\in B_{r_{\vep}}(0))\big|\\ &\le C\vep^2\left(\log{\left(\frac{1}{\vep}\right)}\right)^{6}=o(\vep).
\end{align*}
Same goes for terms having product of some $\vartheta_{\vep}^{(i)}(\tilde{Z}^{(\vep)}_{w_{k\vep},y}|y^{w_{k\vep}})$ with either of $\frac{1}{\vep}T[u_{k\vep}:3](\tilde{Z}^{(\vep)}_{w_{k\vep},y}|y^{w_{k\vep}})$ or $T[f:1](\tilde{Z}^{(\vep)}_{w_{k\vep},y}|y^{w_{k\vep}})$. For example, by \eqref{eq:large2} and \eqref{eq:large21}, we have:
\begin{align*}
&\;\;\;\;\sup_y \bigg|\vartheta_{\vep}^{(i)}(\tilde{Z}^{(\vep)}_{w_{k\vep},y}|y^{w_{k\vep}})\frac{1}{\vep}T[u_{k\vep}:3](\tilde{Z}^{(\vep)}_{w_{k\vep},y}|y^{w_{k\vep}})\bigg|\mathbf{1}(\sqrt{\vep}Z_{w_{k\vep},y}\in B_{r_{\vep}}(0))\\ &\le C\vep^{3/2}\left(\log{\left(\frac{1}{\vep}\right)}\right)^{7/2}=o(\vep).
\end{align*}
Therefore, we can easily isolate the terms that potentially contribute $O(\vep)$ or higher. By doing this in \eqref{eq:gaussbreak}, we get that:
    
    \begin{align*}
        &\;\;\;\;\frac{\sqrt{\mathrm{det}(\nabla^2 u_{k\vep}(y^{w_{k\vep}}))}}{(2\pi\vep)^{\frac{d}{2}}}\int\limits_{B_{r_{\vep}}(y^{w_{k\vep}})} \exp\left(-\frac{1}{\vep}\mcD[u_{k\vep}](x|y)-f(x)+f(y^{w_{k\vep}})\right)\,dx\nonumber \\ 
        &=1-\E\bigg[\bigg(\frac{1}{\vep}T[u_{k\vep}:3](\tilde{Z}^{(\vep)}_{w_{k\vep},y}|y^{w_{k\vep}})+\frac{1}{\vep}T[u_{k\vep}:3](\tilde{Z}^{(\vep)}_{w_{k\vep},y}|y^{w_{k\vep}})T[f:1](\tilde{Z}^{(\vep)}_{w_{k\vep},y}|y^{w_{k\vep}})\\ &-T[f:1](\tilde{Z}^{(\vep)}_{w_{k\vep},y}|y^{w_{k\vep}})+\sum_{i=1}^4 \vartheta_{\vep}^{(i)}(\tilde{Z}^{(\vep)}_{w_{k\vep},y}|y^{w_{k\vep}})\bigg)\mathbf{1}(\sqrt{\vep}Z_{w_{k\vep},y}\in B_{r_{\vep}}(0))\bigg]+o(\vep).
    \end{align*}
    Here the $o(\vep)$ term, as argued above, is uniform in $y$. Next, we use the symmetry of Gaussians to note that 
    $$\E\left[\frac{1}{\vep}T[u_{k\vep}:3](\tilde{Z}^{(\vep)}_{w_{k\vep},y}|y^{w_{k\vep}})\mathbf{1}(\sqrt{\vep}Z_{w_{k\vep},y}\in B_{r_{\vep}}(0))\right]=0,$$
    and 
    $$\E\left[T[f:1](\tilde{Z}^{(\vep)}_{w_{k\vep},y}|y^{w_{k\vep}})\mathbf{1}(\sqrt{\vep}Z_{w_{k\vep},y}\in B_{r_{\vep}}(0))\right]=0.$$
    Therefore, the second and the fourth term above are both $0$. For the third term, note that by using \cref{asn:solcon}, part (iii), we get:
    \begin{align*}
        &\;\;\;\sup_y\E\left[\frac{1}{\vep}T[u_{k\vep}:3](\tilde{Z}^{(\vep)}_{w_{k\vep},y}|y^{w_{k\vep}})T[f:1](\tilde{Z}^{(\vep)}_{w_{k\vep},y}|y^{w_{k\vep}})\mathbf{1}(\sqrt{\vep}Z_{w_{k\vep},y}\in B_{r_{\vep}}(0))\right]\\ &\le \sup_y C\frac{1}{\vep}\E\lVert \tilde{Z}^{(\vep)}_{w_{k\vep},y}-y^{w_{k\vep}}\rVert^4\le C \vep.
    \end{align*}
    Finally, by \eqref{eq:large12}, \eqref{eq:large22}, \eqref{eq:large32}, and \eqref{eq:large42}, we get:
    $$\sup_y \sum_{i=1}^4 \E\left[\bigg|\vartheta_{\vep}^{(i)}(\tilde{Z}^{(\vep)}_{w_{k\vep},y}|y^{w_{k\vep}})\mathbf{1}(\sqrt{\vep}Z_{w_{k\vep},y}\in B_{r_{\vep}}(0))\bigg|\right]\le C\vep.$$
    Therefore, all the requisite terms are $O(\vep)$ or of a lower order. This establishes \eqref{eq:showlarge}, thereby completing the proof of \cref{lem:pmaboundmain}.
\end{proof}

In order to prove \cref{lem:erboundmain}, we consider the following proposition which will be used multiple times in the sequel. 

\begin{prop}\label{prop:contra}
Given $\phi_1, \phi_2\in \diffcont(\R^d)$, we have
$$\lVert\opV[\phi_1]-\opV[\phi_2]\rVert_{\infty}\le \lVert \phi_1-\phi_2\rVert_{\infty},\qquad \lVert \opU[\phi_1]-\opU[\phi_2]\rVert_{\infty}\le \lVert \phi_1-\phi_2\rVert_{\infty}.$$
\end{prop}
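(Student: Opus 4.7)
The plan is to exploit two elementary structural properties of the operator $\opV$ (and analogously $\opU$) visible directly from the definition in \eqref{eq:basedef}: translation equivariance and monotonicity. These together force $\opV$ to be $1$-Lipschitz in the sup norm, which is exactly the claim.

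First I would record the translation identity. For any constant $c\in\R$ and any $u\in\diffcont(\R^d)$, substituting $u+c$ into \eqref{eq:basedef} pulls the factor $e^{-c/\vep}$ out of the integral, and then $\vep\log$ converts this factor into an additive $-c$. Hence
\[
\opV[u+c](y)=\opV[u](y)-c,\qquad \opU[v+c](x)=\opU[v](x)-c.
\]
Second I would note monotonicity: the integrand in \eqref{eq:basedef} is a decreasing function of $u(x)$ pointwise, so if $u_1(x)\le u_2(x)$ for all $x$, then $\opV[u_1](y)\ge \opV[u_2](y)$ for all $y$. The analogous statement holds for $\opU$.

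Now set $M:=\lVert \phi_1-\phi_2\rVert_\infty$, so that $\phi_2-M\le \phi_1\le \phi_2+M$ pointwise. Applying monotonicity and the translation identity yields
\[
\opV[\phi_2](y)-M=\opV[\phi_2+M](y)\le \opV[\phi_1](y)\le \opV[\phi_2-M](y)=\opV[\phi_2](y)+M,
\]
for every $y\in\R^d$, which is exactly $\lVert \opV[\phi_1]-\opV[\phi_2]\rVert_\infty\le M$. The argument for $\opU$ is identical, swapping the roles of $x$ and $y$ and of $f$ and $g$.

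There is no real obstacle here; the entire content is that $\opV$ and $\opU$ are log-sum-exp (softmin) operators in the variable being optimized over, and such operators are well known to be $1$-Lipschitz. The only care needed is to verify the translation identity in the exact normalization used in \eqref{eq:basedef}, which is immediate.
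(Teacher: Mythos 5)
Your proof is correct, and it takes a genuinely different route from the paper. The paper establishes the bound via the variational (Kantorovich/Donsker--Varadhan-type) dual representation of the entropic softmin: it writes $\opV[\phi_i](y)$ as $\vep$ times a supremum over probability measures $\nu$ with $\KL{\nu}{e^{-f}}<\infty$ of a linear functional minus the KL penalty, and then observes that the two objective functionals differ by at most $\frac{1}{\vep}\lVert\phi_1-\phi_2\rVert_\infty$ uniformly in $\nu$, so the suprema differ by at most that amount. Your argument instead uses only two structural facts read directly off \eqref{eq:basedef}: additive homogeneity $\opV[u+c]=\opV[u]-c$ and order reversal ($u_1\le u_2$ pointwise implies $\opV[u_1]\ge\opV[u_2]$), which is the standard ``monotone plus additively equivariant implies sup-norm nonexpansive'' mechanism familiar from nonlinear Perron--Frobenius theory and from Hilbert-metric analyses of Sinkhorn. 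Your route is more elementary (no duality formula is needed) and extends verbatim to any such softmin-type operator; the paper's route has the mild advantage of placing the estimate inside the entropic-OT duality framework already in use elsewhere in the text. One small point worth a sentence in a polished write-up: for general $\phi_1,\phi_2\in\diffcont(\R^d)$ the integrals in \eqref{eq:basedef} may be $+\infty$, so the inequalities should be read in the extended reals (and the claim is vacuous if $\lVert\phi_1-\phi_2\rVert_\infty=\infty$); the monotonicity and translation identities remain valid there, and the paper's own proof glosses over the same issue.
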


\begin{proof}
    By the variational representation of KL divergence, we have:
\begin{equation}\label{eq:dualRE}
\opV[\phi_1](y) = \vep\sup_{\nu:\ \KL{\nu}{e^{-f}}<\infty}\left[ \int \left(\frac{1}{\vep}\langle x,y\rangle - \frac{1}{\vep}\phi_1(x)\right) d\nu(x) - \KL{\nu}{e^{-f}}\right].
\end{equation}
Clearly the same representation as in \eqref{eq:dualRE} also holds with $\phi_1$ replacing $\phi_2$. Now, for any probability measure $\nu$ satisfying $\KL{\nu}{e^{-f}}<\infty$, we have:
\begin{align*}
\bigg| &\;\;\;\; \left[\int \left(\frac{1}{\vep}\langle x,y\rangle - \frac{1}{\vep}\phi_1(x)\right) d\nu(x) - \KL{\nu}{\mu}\right] \\ & -  \int \left[\int \left(\frac{1}{\vep}\langle x,y\rangle - \frac{1}{\vep}\phi_1(x)\right) d\nu(x) - \KL{\nu}{\mu}\right]\bigg| \le \norm{\phi_1 - \phi_2}_\infty. 
\end{align*}
Invoking \eqref{eq:dualRE} completes the proof for $\opV$s. The same strategy also works for $\opU$'s.
\end{proof}

We are now in position to prove the key lemma.

\vspace{0.1in} 

\emph{Proof of \cref{lem:erboundmain}.} By the uniform Lipschitz property of $\opV$s as established in \cref{prop:contra}, we have: 
$$\lVert b_k^{\vep}\rVert_{\infty}=\frac{1}{\vep}\lVert \opV[u_k^{\vep}]-\opV[u_{k\vep}]\rVert_{\infty}\le \frac{1}{\vep}\lVert u_k^{\vep}-u_{k\vep}\rVert_{\infty}=\lVert a_k^{\vep}\rVert_{\infty}.$$
Therefore, it suffices to only work with $a_k^{\vep}$s as defined in \cref{lem:erboundmain}. 
Recall that $u_{k\vep}$'s are the solutions of the PMA \eqref{eq:pma} restricted to time points $t=\vep,2\vep,\ldots$. Consider the following surrogate sequence of functions 
$\bar{u}_{k+1}^{\vep}:=\opS[u_{k\vep}],$
and define,
\begin{equation}\label{eq:approx}
R_T(\vep):=\sup_{k\vep\le T}\lVert \bar{u}_{k+1}^{\vep}-u_{(k+1)\vep}\rVert_{\infty}.
\end{equation}

With the above notation, the proof of \cref{lem:erboundmain} will proceed in two steps. Once again throughout this proof, we will use $C$ to denote a generic constant (which might change from one line to the next) free of $\vep$. 

\emph{Step (a).} We will show that 
$$R_T(\vep)\le C \vep^2.$$

\emph{Step (b).} We then show that 
$$\lVert a_k^{\vep}\rVert_{\infty}\le (k/\vep) R_T(\vep),$$
for all $k$ such that $k\vep\le T$.

By combining steps (a) and (b), we get:
$$\sup_{k:\ k\vep\le T} \lVert a_k^{\vep}\rVert_{\infty}\le \frac{T}{\vep^2}R_T(\vep)\le CT.$$

This proves \cref{lem:erboundmain}. We now focus our attention on proving steps (a) and (b). 

\emph{Proof of step (a).} Recall the time derivatives of the $u_{k\vep}$s as in the PMA~\eqref{eq:pma}. By \cref{asn:solcon}, part (ii), we have:
\begin{align}\label{eq:pmareg}
    &\;\;\;\;\sup_{k:\ k\vep\le T}\sup_x \Bigg|u_{(k+1)\vep}(x)-u_{k\vep}(x)-\vep\left(f(x)-g(x^{u_{k\vep}})+\ldet\left(\frac{\partial x^{u_{k\vep}}}{\partial x\hfill}\right)\right)\Bigg|\nonumber \\ &=\sup_{k:\ k\vep\le T}\sup_x \Bigg|u_{(k+1)\vep}(x)-u_{k\vep}(x)-\vep\frac{\partial}{\partial t}u_t(x)\big|_{t=k\vep}\Bigg|\le C\vep.
\end{align}


Next, we will show the following:
\begin{equation}\label{eq:stepa1}
    \sup_{k: \ k\vep\le T}\sup_x \bigg|\exp\left(\frac{\bar{u}_{k+1}^{\vep}(x)-u_{k\vep}(x)}{\vep}-\left(f(x)-g(x^{u_{k\vep}})+\ldet\left(\frac{\partial x^{u_{k\vep}}}{\partial x\hfill}\right)\right)\right)-1\bigg|\le C\vep,
\end{equation}
By taking logarithms in \eqref{eq:stepa1} and combining it with \eqref{eq:pmareg}, we easily get Step (a). Therefore, it only remains to prove \eqref{eq:stepa1}.

\emph{Proof of \eqref{eq:stepa1}.} First let us define 
$$\vartheta_{k\vep}(y):=\vep^{-1}\left(\opV[u_{k\vep}](y)-w_{k\vep}(y)-\frac{\vep d}{2}\log{(2\pi\vep)}+\vep f(y^{w_{k\vep}})-\frac{\vep}{2}\ldet\left(\frac{\partial y^{w_{k\vep}}}{\partial y\hfill}\right)\right).$$
The following estimate is an immediate consequence of \cref{lem:pmaboundmain}.
\begin{align}\label{eq:estcon}
    \sup_y \bigg|\exp\left(\frac{\vartheta_{k\vep}(y)}{\vep}\right)-1\bigg|\le C\vep.
\end{align}
We also note the following algebraic identity:
\begin{align*}
    &\;\;\;\;\exp\left(\frac{\bar{u}_{k+1}^{\vep}(x)-u_{k\vep}(x)}{\vep}-\left(f(x)-g(x^{u_{k\vep}})+\ldet\left(\frac{\partial x^{u_{k\vep}}}{\partial x\hfill}\right)\right)\right)\\ &=\frac{\sqrt{\mathrm{det}\left(\frac{\partial x\hfill}{\partial x^{u_{k\vep}}\hfill}\right)}}{(2\pi\vep)^{\frac{d}{2}}}\int \exp\Bigg(\frac{1}{\vep}\langle x,y\rangle-\frac{1}{\vep}w_{k\vep}(y)-\frac{1}{\vep}u_{k\vep}(x)+f(y^{w_{k\vep}})-f(x)-g(y)+g(x^{u_{k\vep}})\\ &\qquad -\frac{1}{2}\ldet\left(\frac{\partial y^{w_{k\vep}}}{\partial y\hfill}\right) + \frac{1}{2}\ldet\left(\frac{\partial x\hfill}{\partial x^{u_{k\vep}}\hfill}\right)\Bigg)\exp\left(-\frac{\vartheta_{k\vep}(y)}{\vep}\right)\,dy
\end{align*}
By combining the above display with \eqref{eq:estcon}, we have:
\begin{align*}
    &\;\;\;\;\sup_{k:\ k\vep\le T}\sup_x \Bigg|\exp\left(\frac{\bar{u}_{k+1}^{\vep}(x)-u_{k\vep}(x)}{\vep}-\left(f(x)-g(x^{u_{k\vep}})+\ldet\left(\frac{\partial x^{u_{k\vep}}}{\partial x\hfill}\right)\right)\right)\\ &\qquad\qquad - \frac{\sqrt{\mathrm{det}\left(\frac{\partial x\hfill}{\partial x^{u_{k\vep}}}\right)}}{(2\pi\vep)^{\frac{d}{2}}}\int \exp\Bigg(\frac{1}{\vep}\langle x,y\rangle-\frac{1}{\vep}w_{k\vep}(y)-\frac{1}{\vep}u_{k\vep}(x)+f(y^{w_{k\vep}})-f(x)\\ &\qquad\qquad -g(y)+g(x^{u_{k\vep}})-\frac{1}{2}\ldet\left(\frac{\partial y^{w_{k\vep}}}{\partial y\hfill}\right) + \frac{1}{2}\ldet\left(\frac{\partial x\hfill}{\partial x^{u_{k\vep}}\hfill}\right)\Bigg)\,dy\Bigg|\\ &\le (C\vep) \sup_{k:\ k\vep\le T}\sup_x \frac{\sqrt{\mathrm{det}\left(\frac{\partial x\hfill}{\partial x^{u_{k\vep}}}\right)}}{(2\pi\vep)^{\frac{d}{2}}}\int \exp\Bigg(\frac{1}{\vep}\langle x,y\rangle-\frac{1}{\vep}w_{k\vep}(y)-\frac{1}{\vep}u_{k\vep}(x)+f(y^{w_{k\vep}})-f(x)\\ &\qquad -g(y)+g(x^{u_{k\vep}})-\frac{1}{2}\ldet\left(\frac{\partial y^{w_{k\vep}}}{\partial y\hfill}\right) + \frac{1}{2}\ldet\left(\frac{\partial x\hfill}{\partial x^{u_{k\vep}}\hfill}\right)\Bigg)\,dy.
\end{align*}
Therefore in order to prove \eqref{eq:stepa1}, it suffices to prove: 
\begin{align*}
    &\;\;\;\;\sup_{k:\ k\vep\le T}\sup_x\bigg|\frac{\sqrt{\mathrm{det}\left(\frac{\partial x\hfill}{\partial x^{u_{k\vep}}}\right)}}{(2\pi\vep)^{\frac{d}{2}}}\int \exp\Bigg(\frac{1}{\vep}\langle x,y\rangle-\frac{1}{\vep}w_{k\vep}(y)-\frac{1}{\vep}u_{k\vep}(x)+f(y^{w_{k\vep}})-f(x)\nonumber\\ &\qquad -g(y)+g(x^{u_{k\vep}})-\frac{1}{2}\ldet\left(\frac{\partial y^{w_{k\vep}}}{\partial y\hfill}\right) + \frac{1}{2}\ldet\left(\frac{\partial x\hfill}{\partial x^{u_{k\vep}}\hfill}\right)\Bigg)\,dy-1\bigg|\le C\vep.
\end{align*}
Once again by taking logarithms on both sides above, it suffices to show:
\begin{align}\label{eq:stepa2}
&\sup_{k:\ k\vep\le T} \sup_x \bigg|\log\int \exp\left(\frac{1}{\vep}\langle x,y\rangle-\frac{1}{\vep}w_{k\vep}(y)+f(y^{w_{k\vep}})-g(y)-\frac{1}{2}\ldet\left(\frac{\partial y^{w_{k\vep}}}{\partial y\hfill}\right)\right)\,dy\nonumber \\ &\qquad - \frac{\vep d}{2}\log{(2\pi\vep)} - f(x) + g(x^{u_{k\vep}})-\ldet\left(\frac{\partial x^{u_{k\vep}}}{\partial x\hfill}\right)\bigg|\le C\vep. 
\end{align}
Now $$\widetilde{\opV}[w_{k\vep}](x):=\vep\log\int \exp\left(\frac{1}{\vep}\langle x,y\rangle-\frac{1}{\vep}w_{k\vep}(y)+f(y^{w_{k\vep}})-g(y)-\frac{1}{2}\ldet\left(\frac{\partial y^{w_{k\vep}}}{\partial y\hfill}\right)\right)\,dy$$
has the same form as $\opV[u_{k\vep}]$ with $u_{k\vep}$ replaced by $w_{k\vep}$ and $f$ replaced by the function $\tilde{f}$ given by 
$$\tilde{f}(y)=-f(y^{w_{k\vep}})+g(y)+\frac{1}{2}\ldet\left(\frac{\partial y^{w_{k\vep}}}{\partial y\hfill}\right).$$
By \cref{asn:solcon}, part (iii), $\tilde{f}$ satisfies the same assumptions as $f$ required in \cref{lem:pmaboundmain}. 
Therefore, reworking the same proof as \cref{lem:pmaboundmain} with $\tilde{f}$, we get:
$$\sup_{k:\ k\vep\le T}\sup_x \bigg| \widetilde{\opV}[w_{k\vep}](x)-\frac{\vep d}{2}\log{(2\pi\vep)}+\vep \tilde{f}(x^{u_{k\vep}})-\frac{\vep}{2}\ldet\left(\frac{\partial x^{u_{k\vep}}}{\partial x\hfill}\right)\bigg|\le C\vep^2.$$
As $\tilde{f}(x^{u_{k\vep}})=-f(x)+g(x^{u_{k\vep}})-\frac{1}{2}\ldet\left(\frac{\partial x^{u_{k\vep}}}{\partial x\hfill}\right)$, the above display establishes \eqref{eq:stepa2}. This establishes \eqref{eq:stepa1} and consequently, also establishes Step (a).

\vspace{0.1in}

\emph{Proof of Step (b).} To establish step (b), the crucial tool will be \cref{prop:contra}. To wit, note that for all $k$ such that $k\vep\le T$, we have
\begin{align*}
    \lVert a_k^{\vep}\rVert_{\infty}&\le \frac{1}{\vep}\lVert u_{k}^{\vep}-\bar{u}_{k}^{\vep}\rVert_{\infty}+\frac{1}{\vep} \lVert \bar{u}_{k}^{\vep}-u_{k\vep}\rVert_{\infty}\\ &\le \frac{1}{\vep}\lVert \opS[u_{k-1}^{\vep}]-\opS[u_{(k-1)\vep}]\rVert_{\infty}+\frac{1}{\vep}R_t(\vep)\\ &\le \lVert a_{k-1}^{\vep}\rVert_{\infty}+\frac{1}{\vep}R_t(\vep).
\end{align*}
The last inequality follows by noting that $\opS$ is $1$-Lipschitz in the uniform norm which in turn follows from the fact that $\opS=\opU\circ\opV$ and both $\opU$, $\opV$ are $1$-Lipschitz in the uniform norm by \cref{prop:contra}. Now, in order to prove the conclusion in Step (b), we will use the above inequality recursively, i.e., 
$$\lVert a_k^{\vep}\rVert_{\infty}\le \lVert a_{k-1}^{\vep}\rVert_{\infty}+\frac{1}{\vep}R_t(\vep)\le \lVert a_{k-2}^{\vep}\rVert_{\infty}+\frac{2}{\vep}R_t(\vep)\le \ldots \le \lVert a_0^{\vep}\rVert_{\infty}+\frac{k}{\vep}R_T(\vep).$$
Now, by definition, $a_0^{\vep}=\vep^{-1}(u_0^{\vep}-u_0)=0$ as we use the same initializer for the Sinkhorn algorithm \eqref{eq:sinkupdt} and the PMA \eqref{eq:pma}. This implies, using the above display coupled with step (a), 
$$\sup_{k:\ k\vep\le T}\lVert a_k^{\vep}\rVert_{\infty}\le \sup_{k:\ k\vep\le T} \frac{k}{\vep}R_T(\vep)\le \sup_{k:\ k\vep\le T} \frac{k}{\vep}\cdot C\vep^2\le CT.$$
This establishes step (b).

\begin{longtable}{|p{2cm}|p{9.8cm}|}
\caption{Notation chart}
\label{tab:table3}\\
\hline
\textbf{Notation} & \textbf{Meaning} \\
\hline
$\mathbb{N}$ & Set of natural numbers \\
\hline
$[n]$, $n\in\mathbb{N}$ & The set $\{1,2,\ldots ,n\}$ \\
\hline
$\lmn(A)$ & The minimum eigenvalue of a square matrix $A$ \\
\hline 
$\lmx(A)$ & The maximum eigenvalue of a square matrix $A$ \\
\hline 
$\trc(A)$ & The trace of a square matrix $A$ \\
\hline 
$\lVert A\rVert_{\hs}$ & The Frobenius norm of a square matrix $A$ \\ 
\hline 
$\lVert A\rVert_{\mathrm{op}}$ & The $L^2$ operator norm of a square matrix $A$\\
\hline
$\sqrt{A}$ & Cholesky square root of a symmetric and positive definite matrix $A$, i.e., $\sqrt{A}=S$ if and only if $A=S^2$
\\ 
\hline 
$A\preceq B$ & $B-A$ is non-negative definite \\ 
\hline
$I_d$ & The $d\times d$ identity matrix. We will drop the $d$ from the subscript when the dimension is obvious\\ 
\hline 
$\mathrm{Id}$ & The identity function on $\R^d$\\ \hline 
$|x|$ & The Euclidean norm of a vector $x$ \\ 
\hline 
$T_{\#}\mu$ & Given a probability measures $\mu$ on $\R^d$ and a function $T:\R^d \rightarrow \R^d$, this is the push-forward of $\mu$ by $T$, i.e., the probability distribution of $T(X)$ where $X\sim\mu$ \\ \hline 
$\px \gamma$ & The $X$ marginal density of the joint density $\gamma$ on $\R^d\times \R^d$\\ \hline 
$\py \gamma$ & The $Y$ marginal density of the joint density $\gamma$ on $\R^d\times \R^d$
\\ \hline 
$p_{X|Y}\gamma(\cdot|\cdot)$ & The conditional density of $X$ given $Y$ under the joint density $\gamma$\\ \hline 
$p_{Y|X}\gamma(\cdot|\cdot)$ & The conditional density of $Y$ given $X$ under the joint density $\gamma$\\ \hline 
$\diffcont(\R^d)$ & The space of continuous functions on $\R^d$\\ \hline 
$\diffcont^k(\R^d)$ & The space of $k$ times continuously differentiable functions on $\R^d$ \\ \hline 
$\diffcont^{k,\ell}(\R^d)$ & The space of functions on $[0,\infty)\times \R^d$ (time $\times$ space) with uniformly continuous mixed derivatives up to order $k$ in time and $\ell$ in space \\ \hline 
$B_r(x)$ & The Euclidean ball centered at $x$ with radius $r$ \\ \hline
$\div$ & The Divergence operator\\ \hline 
$\bmd$ & The Laplacian operator\\ \hline
$\nabla$ or $\frac{\partial}{\partial x}$ & The gradient or partial derivative with respect to the usual coordinate chart $x$\\ \hline 
$\nabla^r$ & The $r$-th order multi-derivative with respect to the coordinate chart $x$\\ \hline 
$\int$ & The integral notation without any domain specified will always imply that the integral is over $\R^d$\\ \hline
$\nabla^{-2}$ & The inverse of the Hessian matrix with respect to the coordinate chart $x$\\ \hline 
$\frac{\delta}{\delta \rho}$ & The first variation of a function with respect to a probability measure $\rho$\\ \hline
$\lVert \cdot\rVert_{L^2(\rho)}$ & The $L^2$ norm of a function computed with respect to a probability measure $\rho$\\ \hline
\end{longtable}

\bibliographystyle{amsalpha}
\bibliography{references}
\end{document}